\numberwithin{equation}{section}       
 \theoremstyle{plain}
\newtheorem{theorem}{Theorem}[section]
\newtheorem*{mtheorem}{Main Theorem}
\newtheorem*{iitheorem}{Informal Statement of Theorem \ref{thm:curves}}
\newtheorem*{atheorem}{Theorem A.1}
\newtheorem{prop}{Proposition}[section]
\newtheorem{coro}[prop]{Corollary}
\newtheorem{lemma}[prop]{Lemma}
\newtheorem{fact}[prop]{Fact}
\newtheorem{example}[prop]{Example}
\newtheorem*{mainlemma}{Main Lemma}
\newcommand{\ARNOLD}[1]{}  
\newtheorem{definition}[prop]{Definition}
\newtheorem{remark}[prop]{Remark}
\newtheorem{exam}[prop]{Example}
\newtheoremstyle{citing}
  {3pt}
  {3pt}
  {\itshape}
  {}
  {\bfseries}
  {.}
  {.5em}
  {\thmnote{#3}}
\theoremstyle{citing}
\newcommand{\C}{\mathbb{C}}
\newcommand{\D}{\mathbb{D}}
\newcommand{\R}{\mathbb{R}}
\newcommand{\Z}{\mathbb{Z}}
\newcommand{\sign}{\mbox{sign\,\,}}
\newcommand{\teta}{\widetilde{\teta}}
\newcommand{\eps}{\varepsilon}
\DeclareMathOperator{\diam}{diam}
\begin{document}
\title[]{Positive Transversality via 
transfer operators and holomorphic motions with applications
to monotonicity for interval maps}
\author{Genadi Levin, Weixiao Shen and Sebastian van Strien}

\date{24 Jan 2019}

\begin{abstract}
In this paper we will develop a general approach which shows that generalized {\lq\lq}critical relations{\rq\rq}
of  families of locally defined holomorphic maps on the complex plane unfold transversally.
The main idea is to define a transfer operator,  which is a local analogue of the
Thurston pullback operator, using holomorphic motions.
Assuming a so-called lifting property is satisfied, we obtain information about the spectrum of this transfer operator
and thus about transversality.
An important new feature of our method is that it is not global:  the maps we consider are only required to be defined and holomorphic on
a neighbourhood of some finite set.

We will  illustrate this method by obtaining
 transversality for a wide class of one-parameter families of interval and circle maps,
for example for maps with flat critical points,   
but also for maps
with  complex analytic extensions such as certain polynomial-like maps.
As in Tsujii's approach \cite{Tsu0,Tsu1}, for real maps we  obtain {\em positive} transversality (where $>0$ holds instead of just $\ne 0$),
and thus  monotonicity of entropy for these families, and also (as an easy application) for
the real quadratic family.

This method additionally  gives results for unimodal families of the form $x\mapsto |x|^\ell+c$
for $\ell>1$ not necessarily an even integer and $c$ real.
\end{abstract}

\maketitle
\setcounter{tocdepth}{1}
\setcounter{secnumdepth}{2}

\tableofcontents

\section{Introduction}

This paper is about bifurcations in families of (real and complex) one dimensional dynamical systems.\footnote{This paper  is based on the preprint
{\em Monotonicity of entropy and positively oriented transversality for families of interval maps},
see https://arxiv.org/abs/1611.10056}  For example, 
for real one-dimensional dynamical systems, we have a precise combinatorial description on the dynamics in terms of the so-called kneading sequences. One simple but very important question is how the kneading sequence varies in families of such systems. For the real quadratic family $f_a(x) = x^2 + a$, it is known that the kneading sequence depends monotonically  on the parameter $a$ (with respect to the natural order defined for kneading sequences). Interestingly the proofs of this result, by Milnor-Thurston, Douady-Hubbard and Sullivan, make use of Teichm\"uller theory,  uniqueness in Thurston's realisation theorem,  or quasiconformal rigidity theory and that the map $f_a$ is quadratic, see   
\cite{MT, Su} and also \cite{D}.

To answer the above monotonicity question it is enough to show that when $f^n_{a'}(0)=0$  for some $a'\in \R$, 
then there exists no other parameter $a''\in \R$ for which $f^n_{a''}(0)=0$
and for which $f_{a'}$ and $f_{a''}$ also have the same (periodic) kneading sequence. 

If $\frac{d}{d a} f_a^n(0)\left.\right|_{a=a'}\ne 0$ (which is called {\em transversality})
then one has local uniqueness in the following sense:
there exists $\epsilon>0$ so that the  kneading invariant of $f_a$ for $a\in (a'-\epsilon,a')$, for $a\in (a',a'+\epsilon)$
and for $a=a'$ are all different. 
%
%
It turns out that {\em global} uniqueness and monotonicity follows from 
$$Q:= \frac{\frac{d}{d a} f_a^n(0)\left.\right|_{a=a'}}{Df_{a'}^{n}(f_{a'}(0))}>0  $$
(which we call {\em positive transversality}).   Tsujii  gave an alternative proof of the above monotonicity for the quadratic family by showing
that this inequality holds \cite{Tsu0,Tsu1}.

For general (complex) holomorphic families of maps with several critical points which all are eventually periodic
there exists a similar expression $Q$. Again $Q\ne 0$ implies that the bifurcations
are non-degenerate and hence the corresponding critical relations unfold transversally. 
In this paper we show that the inequality $Q\neq 0$ holds provided the spectrum of some operator $\mathcal{A}$ does not contain 1, and that $Q > 0$ holds if additionally the spectrum of  $\mathcal{A}$ is contained in the closed unit disc and  the family of maps is
real.

We define this operator $\mathcal A$ by considering how the speed of a  (holomorphic) motion of the orbits of critical points is lifted under the dynamics.  The novelty of our method, described in Proposition 5.1 and Theorem 6.1, is to show if these holomorphic motions have the lifting property, i.e. can  be lifted infinitely many times over the same domain, then the operator $\mathcal A$ has the above spectral properties.

It turns out that the lifting property makes minimal use of the global dynamics of the holomorphic extension of the dynamics. Thus we can obtain transversality properties of families $f_t$ of maps defined on open subsets of the
complex plane so that  $f_0$ has a finite invariant marked set, e.g., $f_0$ is {\lq}critically finite{\rq}.

The methods developed in this paper give a new and simple proof of well-known results
for families of polynomial maps, rational or entire maps,
but also applies to many other families for which no techniques were available.
For example we obtain monotonicity for the family of maps $$f_c(x)= b e^{-1/|x|^\ell}+c$$
having a flat critical point at $0$. We also obtain partial monotonicity 
for the family $$f_c(x)=|x|^\ell+c$$ when $\ell$ is large. 

As mentioned, the aim of this paper is to deal with families of maps which are only locally holomorphic.
This means that the approach pioneered by Thurston, and developed by Douady and Hubbard in \cite{DH2}, cannot be applied.
In Thurston's approach, when $f$ is a globally defined holomorphic map, $P$ is a finite $f$-forward invariant set containing
the postcritical set and the Thurston map 
$\sigma_f\colon
\mbox{Teich}(\widehat{\C}\setminus P) \to \mbox{Teich}(\widehat{\C}\setminus P)$
 is defined by pulling back an almost holomorphic structure.
It turns out that $\sigma_f$ is contracting, see \cite[Corollary 3.4]{DH2}.
In  Thurston's result on the topological realisation of rational maps,  Douady \& Hubbard \cite{DH2}
use that the dual of the derivative of the Thurston map $\sigma_f$ is equal to the Thurston pushforward operator $f_*$.

The pushforward operator acts on space of quadratic differentials, and the
 injectivity of  $f_*-id$ has been used to obtain transversality results
for rational maps or even more general spaces of maps,  see Tsujii \cite{Tsu0,Tsu1}, Epstein \cite{Ep2}, Levin \cite{Le1,Le2,Le3}, Makienko
\cite{Mak}, Buff \& Epstein \cite{BE} and Astorg \cite{Ast}.  See also earlier \cite{Le4}, \cite[Proposition  22\footnote{This reference was provided by the author.}]{Ep1}, \cite{Ep3}.
For a short and elementary proof that   critical relations unfold transversally
in the setting of rational maps,  see \cite{LSvS}. For the spectrum of $f_*$ see also   \cite{LSY0}, \cite{LSY}, \cite{Le0}, \cite{ELS} and  \cite{Betal}.   In \cite{Str}, transversality is shown for rational maps 
for which each critical point is  mapped into a hyperbolic set, using the  uniqueness part of
Thurston's realisation result.

%

However if for example  $f\colon U\to V$ is a polynomial-like map then each point in the boundary of $V$ is a singular
value,  $\mbox{Teich}(V\setminus P)$ is
 infinite dimensional, Thurston's algorithm is only locally well-defined and it is not clear whether it is locally contracting.

The purpose of our paper is to bypass this issue, by going back to the original Milnor-Thurston approach.
Milnor and Thurston \cite{MT} associated to the space of quadratic maps and
the combinatorial information of a periodic orbit,
a map
 which assigns to a $q$ tuple of points a new $q$ tuple of points,
$$F: (z_1,...z_q)\mapsto (\hat z_1,...,\hat z_q)$$
where $\hat z_q=0$ and
$$f_{z_1}(\hat z_i)=z_{i+1 \!\!\!\!  \mod q}$$
where $f_{c}(z)\equiv z^2+c$.
Since $F$ is many-valued, Milnor \& Thurston considered a lift $\tilde{F}$ of this map to the universal cover and apply Teichm\"uller theory to show that $\tilde{F}$ is strictly contracting in the Teichmuller metric of the universal cover.




We bypass this issue by rephrasing their approach locally (via holomorphic motions). This is done in the set-up of so-called marked maps (and their local deformations) which include particularly critically finite maps with any number of critical (singular) points.
In the first part of the paper we prove general results, notably the Main Theorem, which show that under the assumption that
some lifting property holds for the deformation,
either some critical relation persists along some non-trivial manifold in parameter space  or one has
transversality, i.e. the critical relation unfolds transversally. Here the lifting property is an assumption that sequences of successive lifts of holomorphic motions
are compact. In the second part of the paper, we then  show that this lifting property holds not only in
previously considered global cases but also for interesting classes of maps where the {\lq}pushforward{\rq} approach breaks down.

More precisely, we define a transfer operator $\mathcal A$ by its action on infinitesimal holomorphic motions on $P$.
It turns out that 
if the lifting property holds, then the spectrum of the operator $\mathcal A$ lies inside the unit disc.
Moreover, if the operator $\mathcal A$ has no eigenvalue $1$ then  transversality holds; in the real case
one has even positive transversality (the sign of some determinant is positive).  One of the main
steps in the proof of the Main Theorem is then to show that if the operator has an eigenvalue $1$ then
the critical relation persists along a non-trivial manifold in parameter space.
It turns out that for
globally defined critically finite maps $f$  the transfer operator $\mathcal A$ can be identified with (dual of) $f_*$.

By verifying the lifting property we recover previous results such as transversality for rational maps, but also
obtain transversality for many interesting families of polynomial-like mappings and families of maps with essential singularities.
For real local maps our approach gives the {\lq}positive transversality{\rq} condition which first appeared in \cite{Tsu0,Tsu1} and therefore
monotonicity of entropy for certain families of real maps.

\section{Statement of Results}

\subsection{The notion of a  marked map}\label{subsec:markedmap}
\def\MM{\mathcal M}
A {\em marked map} is a map $g$ from the union of a finite set $P_0$ and an open set $U\subset \C$ mapping
into $\C$ such that
\begin{itemize}
\item there exists a finite set $P\supset P_0$ such that $g(P)\subset P$ and $P\setminus P_0\subset U$;
\item $g|U$ is holomorphic and $g'(x)\not=0$ for $x\in P\setminus P_0$.
\end{itemize}
Let $\{c_{0,1}, \dots,c_{0,\nu}\}$ denote the (distinct) points in $P_0$ and write
${\bf c}_0={\bf c}_0(g)=(c_{0,1},\dots ,c_{0,\nu})$
and
$${\bf c}_1={\bf c}_1(g)=(g(c_{0,1}),\dots, g(c_{0,\nu})):=(c_{1,1},c_{1,2},\ldots, c_{1,\nu}).$$

\begin{remark}
So $P$ is a forward invariant set for $g$, and $g$ is only  required to be holomorphic (and  defined)  on a
neighbourhood of
$P\setminus P_0$.  A  marked map $g$ does not need to be defined in a neighbourhood of $P_0$.
 In applications, points in $P_0$ will be where   some extension of $g$ has a critical point, an (essential) singularity or even
 where $g$ has a discontinuity. In this sense marked maps correspond to a generalisation of
 the notion of critically finite maps. \end{remark}

\subsection{Holomorphic deformations}\label{subsec:holdefor}
A {\em local holomorphic deformation} of $g$
is a triple $(g,G, \textbf{p})_W$ with the following properties:
\begin{enumerate}
\item $W$ is an open connected subset of $\C^\nu$ containing  $\textbf{c}_1(g)$;
\item $\textbf{p}=(p_1,p_2,\ldots, p_\nu):W\to \C^\nu$ is a holomorphic map,
so that $\textbf{p}(\textbf{c}_1)={\bf c}_0(g)$ (and so all coordinates of $\textbf{p}(\textbf{c}_1)$ are distinct).
\item $G: (w,z)\in W\times U \mapsto G_w(z)\in  \C$ is a holomorphic map such that $G_{\textbf{c}_1}=g|U$.
\end{enumerate}

\begin{example}  \label{ex1} 
(i) The simplest local holomorphic deformation of $g$ is of course the trivial one: $G_w(z)=g(z)$, $p(w)=c_0$, $\forall w$.  \\
(ii) If $g$ is a marked map with $\nu=1$, i.e., $P_0=\{c_0\}$, then $G_w(z)=g(z)+(w-g(c_0))$, $p(w)\equiv c_0$, defines a local holomorphic deformation of $g$.  \\
(iii) If $g$ is rational map with $\nu$ critical points $c_1,\dots,c_\nu$  with multiplicity $\mu_1,\dots,\mu_\nu$
then there exists a local holomorphic deformation $(g,G,\textbf{p})_W$ of $g$ so that $w_i=G_w(\textbf{p}_i(w))$ is a critical value of $G_w$ for each  $w=(w_1,\dots,w_\nu)\in W$ and each $i=1,\dots,\nu$, see Appendix C. 
\end{example} 

\subsection{Transversal unfolding of critical relations}
Let us fix $(g,G, \textbf{p})_W$ as above.
Since $g(P)\subset P$ and $P$ is a finite set,
for each $j=1,2,\ldots, \nu$, exactly one of the following {\em critical relations} holds:
\begin{enumerate}
\item[(a)]  There exists an integer $q_j>0$ and $\mu(j)\in \{1,2,\ldots,\nu\}$ such that
$g^{q_j}(c_{0,j})=c_{0,\mu(j)}$ and $g^k(c_{0,j})\not\in P_0$ for each $1\le k<q_j$;
\item[(b)]\label{ljqj} There exist integers $1\le l_j<q_j$ such that
$g^{q_j}(c_{0,j})=g^{l_j}(c_{0,j})$ and $g^k(c_{0,j})\not\in P_0$ for all $1\le k\le q_j$.
\end{enumerate}
Relabelling these points $c_{0,j}$, we assume that there is $r$ such that
the first alternative happens for all $1\le j\le r$ and the second alternative happens for $r<j\le \nu$.

Define the map
$$\mathcal{R}=(R_1, R_2, \dots, R_\nu)$$
from a neighbourhood of $\textbf{c}_1\in \C^\nu$ into $\C^\nu$ as follows:
for $1\le j\le r$,
\begin{equation}R_j(w)=G_{w}^{q_j-1}(w_j)-p_{\mu(j)}(w)\label{R1-r'} \end{equation}
and for $r<j\le \nu$,
\begin{equation}R_j(w)=G_{w}^{q_j-1}(w_j)-G_{w}^{l_j-1}(w_j),\label{Rr-nu'}\end{equation}
where $w=(w_j)_{j=1}^\nu$.

\begin{definition}\label{def:unfoldtrans}
We say that the holomorphic deformation $(g,G, \textbf{p})_W$ of $g$ satisfies the {\em transversality property}, if the Jacobian matrix
$D\mathcal{R}(\textbf{c}_1)$ is invertible.
\end{definition}

\begin{example}  \label{ex2} 
(i) Assume  that $(g,G,\textbf{p})_W$ is a local holomorphic deformation so that for each $w=(w_1,\dots,w_\nu)\in W$, 
the critical values of $G_w$ are  $w_1,\dots,w_\nu$  
and  $\textbf{p}_1(w),\dots,\textbf{p}_\nu(w)$ are the critical points of $G_w$.
Then $G_{w}(\textbf{p}_i(w))=w_i$. Hence $R_j\equiv 0$ in (\ref{R1-r'}) or (\ref{Rr-nu'}) correspond to 
$$G_{w}^{q_j}(\textbf{p}_j(w)) - p_{\mu(j)}(w) \equiv 0 \mbox{ resp. }G_{w}^{q_j}(\textbf{p}_j(w))-G_{w}^{l_j}(\textbf{p}_i(w)) \equiv 0.$$ 
These equations define the set of parameters $w$
for which the corresponding {\lq}critical relation{\rq} is satisfied within the family $G_w$. \\
(ii) In  Example~\ref{ex3}(ii) we will consider a  holomorphic deformation  $(g,G,\textbf{p})_W$ 
of a  map $g$ so that $G_w$ is  not defined (as an analytic map) in $\textbf{p}_j(w)$, but nevertheless 
the above interpretation is valid.   \\
(iii)
 If we take the trivial deformation $G_w(z)=g(z)$, ${\bf p}(w)={\bf c}_0$, $\forall w\in W$,   then definitions  (\ref{R1-r'}) or (\ref{Rr-nu'})  take the form
 $R_j(w)=g^{q_j-1}(w_j)-c_{0,j'}$ respectively $R_j(w)=g^{q_j-1}(w_j)- g^{l_j-1}(w_j)$. 
Then $D\mathcal{R}(c_1)$ is a diagonal matrix with entries $Dg^{q_j-1}(w_j)$ for $1\le j\le r $
and $Dg^{l_j-1}(w_j) [ Dg^{q_j-l_j}(g^{l_j-1}(w_j))-1]$ for $ r<j\le \nu$.
So the matrix $D\mathcal{R}(c_1)$ is non-degenerate iff  $Dg^{q_j-l_j}(g^{l_j-1}(w_j))\neq 1$ for $r<j\le \nu$.
 It follows immediately that the  
 holomorphic deformation $(g,G,{\bf p})_W$ satisfies the transversality property if and only if $Dg^{q_j-l_j}(g^{l_j}(c_{0,j}))\neq 1$ for $r<j\le\nu$.
We should emphasise that in this setting the condition $\mathcal{R}(w)=0$ has nothing to do with
the presence of critical relations.
\end{example}


\subsection{Real marked maps and positive transversality}
A marked map $g$ is called {\em real} if $P\subset \R$ and for any $z\in U$ we have $\overline{z}\in U$ and $\overline{g(z)}=g(\overline{z})$.  Similarly, a local holomorphic deformation $(g, G,\textbf{p})_W$ of a real marked map $g$ is called {\em real} if
for any $w=(w_1, w_2, \ldots, w_\nu)\in W$, $z\in U$ and $j=1,2,\ldots,\nu$, we have
$\overline{w}= (\overline{w_1}, \overline{w_2},\ldots, \overline{w_\nu})\in W$,
$G_{\overline{w}}(\overline{z})=\overline{G_w(z)}, \text{ and } p_j(\overline{w})=\overline{p_j(w)}$.

\begin{definition}
Let $(g, G, \textbf{p})_W$ be a real local holomorphic deformation of a real marked map $g$.
We say that  the unfolding $(g,G, \textbf{p})_W$ satisfies {\em the `positively oriented' transversality property} if
\begin{equation} Q:= \frac{\det (D\mathcal{R}(\textbf{c}_1))}{\prod_{j=1}^\nu Dg^{q_j-1}(c_{1,j})}>0.\label{eq:trans2}
\end{equation}
\end{definition}
The sign in the previous inequality  means that the intersection of the analytic sets $R_j=0$
$j=1,\dots,\nu$,  
is not only in general position (i.e. {\lq}transversal{\rq}), 
but that the intersection pattern is {\em everywhere  {\lq}positively oriented{\rq}}. 

\subsection{The lifting property}\label{subsec:liftproperty}

Let $X\subset \C$ and  $\Lambda$ be a domain in $\C$ which contains $0$. 
As usual, we say that $h_\lambda$ is a {\em holomorphic motion of $X$ over   $(\Lambda, 0)$}, if 
 $h_\lambda \colon X\to \C$ satisfies:  \\ 
 (i)  $h_0(x)=x$, for all  $x\in X$,  \\
 (ii)  $h_\lambda(x)\ne h_\lambda(y)$ whenever $x\ne y$ and $\lambda\in \Lambda$, $x,y\in X$ and \\
 (iii) $\Lambda \ni \lambda \mapsto h_\lambda(z)$ is holomorphic.   

Given a holomorphic motion $h_\lambda$ of $g(P)$ over $(\Lambda, 0)$, where $\Lambda$ is a domain in $\C$ which contains $0$,
we say that $\hat{h}_\lambda$ is a {\em lift}  of $h_{\lambda}$ {\em over} $\Lambda_0\subset \Lambda$ (with $0\in \Lambda_0$)
with respect to  $(g,G, \textbf{p})_W$ if for all $\lambda\in \Lambda_0$,
\begin{itemize}
\item $\hat{h}_\lambda(c_{0,j})=p_j({\textbf{c}_1}(\lambda))$ for each $j=1,2, \dots, \nu$, with $c_{0,j}\in g(P)$,
where
$$\textbf{c}_1(\lambda)=(h_{\lambda}(c_{1,1}), h_{\lambda}(c_{1,2}),\dots, h_{\lambda}(c_{1,\nu}));$$
\item $G_{\textbf{c}_1(\lambda)}(\hat{h}_{\lambda}(x))=h_{\lambda}(g(x))$ for each $x\in g(P)\setminus P_0$.
\end{itemize}
Clearly such a lift exists, provided $\Lambda_0$ is contained in a sufficiently small neighbourhood of $0$.

We say that the triple $(g,G,\textbf{p})_W$ has the {\em lifting property} if for each holomorphic motion $h_\lambda^{(0)}$ of $g(P)$ over $(\D,0)$
there exist $\eps>0$ and a sequence of holomorphic motions $h_\lambda^{(k)}$, $k=1,2,,\ldots$
of $g(P)$ over $(\D_{\eps}, 0)$ such that for each $k\ge 0$,
\begin{itemize}
\item $h^{(k+1)}_\lambda$ is a lift of $h^{(k)}_\lambda$ over $(\D_{\eps}, 0)$;
%
\item there exists $M>0$ such that $|h_\lambda^{(k)}(x)|\le M$ for all
$x\in g(P)$ and all $\lambda\in \D_{\eps}$.
\end{itemize}
In the case $(g, G, \textbf{p})_W$ is real, we say it has {\em the real lifting property} 
if the corresponding property holds for any real-symmetric  holomorphic motions $h_\lambda^{(0)}$.

\subsection{Statement of the Main Theorem}

\begin{mtheorem}
%
Assume that g does not have a parabolic periodic point in $P\setminus P_0$ and that $(g,G,\textbf{p})_W$ satisfies the lifting property. Then exactly one of the following holds:
\begin{enumerate}
\item[(1)] the holomorphic deformation $(g,G, \textbf{p})_W$ of $g$  satisfies the tranversality property;
\item[(2)] there exists a neighborhood $W'$ of ${\bf c}_1$ such that $\{w\in W': \mathcal{R}(w)=0\}$ is a smooth complex manifold of positive dimension.
\end{enumerate}
Moreover, if $(g,G, \textbf{p})_W$ is real and satisfies the real lifting property then in (1) `the transversality property' can be replaced by `the {\lq}positively oriented{\rq} transversality property'.
\end{mtheorem}

The statement of this theorem is a combination of the more detailed statements in
Theorems~\ref{thm:specAtrans} and \ref{thm:1eigen}.

\subsection{Classical settings where the lifting property holds}
In many cases it is easy to check that the lifting property holds, and therefore the previous theorem
applies.  Indeed, it is easy to see that this holds in the setting
of polynomial or rational maps, see Section~\ref{sec:liftingclassical}.

\subsection{Transversality for new families of maps corresponding to classes $\mathcal F$, $\mathcal E$, $\mathcal E_o$} \label{subsec:defclassesEF} 
In this subsection we will discuss two new settings where the current approach
can be applied to obtain transversality.

Let us first consider families of maps $f_c(z)=f(z)+c$. Here  $f$ is contained in the space $\mathcal F$
of holomorphic maps $f\colon  U\to V$ where
\begin{enumerate}
\item[(a)] $U$ is a bounded open set in $\C$ such that 
$0\in \overline{U}$;
\item[(b)] $V$ is a bounded open set in $\C$; 
\item[(c)] $f: U\setminus \{0\}\to V\setminus \{0\}$ is an unbranched covering; 
\item[(d)] $V\supset B(0;\diam(U)) \supset U$.
\end{enumerate}
Examples of such families are
\begin{example}\label{ex3} 
\begin{enumerate}
\item[(i)] $f_c(z)=z^d+c$, where $U,V$ are suitably large balls and $c\in U$.
\item[(ii)]  $f_c(x)= b e^{-1/|x|^\ell}+c$ where
$\ell\ge 1$, $b> 2(e\ell)^{1/\ell}$, $c\in U$.
Here $U=U^-\cup U^+$, $U^+=-U^-$ are topological disks symmetric w.r.t. the real axis
and $V$ is a punctured disc. That $f_0 \in \mathcal F$ is proved in Corollary~\ref{coro83}.
\end{enumerate}\end{example} 

\begin{theorem} Let $f\in \mathcal{F}$, let $g=f_{c_1}$ and for each $w\in W:=\C$ define $G_w(z)=g(z)+(w-c_1)$ and
${\bf p}(w)=0$ where we assume that $c_1\in U$. Moreover, assume that there exists $q$ so that
$c_n=g^{n-1}(c_1)\in U$ for all $n<q$ and either
$c_q=0$ or $c_q\in \{c_1,\dots,c_{q-1}\}$. Moreover, assume
$c_n\notin \{0,c_1,\dots,c_{n-1}\}$ for $0<n<q$.
Then
\begin{itemize}
\item $(g,G,{\bf p})_W$  satisfies the lifting property and transversality holds.
\item if  $(g,G,{\bf p})_W$ is real, then positive transversality holds.
\end{itemize}
\end{theorem}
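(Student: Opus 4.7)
The strategy is to invoke the Main Theorem, so the bulk of the work is to verify the lifting property; the second alternative of the Main Theorem is then ruled out using the one-dimensional structure of $W=\C$.

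\textbf{Verifying the lifting property.} Since $g=f+c_1$, one has $G_w(z)=g(z)+(w-c_1)=f(z)+w$, so the lift equation
$G_{\mathbf{c}_1(\lambda)}(\hat h_\lambda(x))=h_\lambda(g(x))$
reduces, for $x\in g(P)\setminus\{0\}$, to
\[
f(\hat h_\lambda(x))=h_\lambda(g(x))-h_\lambda(c_1),
\]
with $\hat h_\lambda(0)=0$ when $0\in g(P)$. Given a motion $h_\lambda^{(0)}$ of $g(P)$ over $(\mathbb D,0)$, I would first shrink to some $\mathbb D_\varepsilon$ on which the differences $h_\lambda^{(0)}(g(x))-h_\lambda^{(0)}(c_1)$ lie in $V\setminus\{0\}$; nonvanishing is automatic because $g(x)\neq c_1$ for $x\neq 0$ together with injectivity of the motion. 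Since $f\colon U\setminus\{0\}\to V\setminus\{0\}$ is an unbranched covering and $\mathbb D_\varepsilon$ is simply connected, for each $x$ the map $\lambda\mapsto h_\lambda^{(0)}(g(x))-h_\lambda^{(0)}(c_1)$ lifts uniquely through $f$ to a holomorphic $\lambda\mapsto h_\lambda^{(1)}(x)\in U$ normalised by $h_0^{(1)}(x)=x$. Injectivity of the lift in $x$ follows from the covering property: if $g(x)\neq g(y)$, it is inherited from $h_\lambda^{(0)}$; if $g(x)=g(y)$ with $x\neq y$, then $\hat h_\lambda(x)$ and $\hat h_\lambda(y)$ are continuations of the two distinct preimages of a common value under the unbranched $f$, and so remain distinct for all $\lambda$.

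The key point that closes the scheme on itself is that once $h_\lambda^{(k)}$ takes values in $U$, hypothesis (d) guarantees
\[
\bigl|h_\lambda^{(k)}(g(x))-h_\lambda^{(k)}(c_1)\bigr|\leq \diam(U),
\]
so the right-hand side of the lift equation automatically lies in $B(0;\diam(U))\setminus\{0\}\subset V\setminus\{0\}$; no further shrinkage of $\varepsilon$ is required at subsequent steps, and iteration produces the sequence $(h_\lambda^{(k)})$ with uniform bound $M:=\sup_{z\in U}|z|$. The entire construction commutes with complex conjugation, so in the real setting the real lifting property holds as well.

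\textbf{Applying the Main Theorem and ruling out alternative (2).} The marked map $g$ has no parabolic periodic point in $P\setminus P_0=\{c_1,\ldots,c_{q-1}\}$: in case (a) the $q$-cycle passes through $0\in P_0$ where $f$ has a branch point or essential singularity, forcing the cycle multiplier to vanish or be undefined in the ordinary sense, while in case (b) non-parabolicity of the cycle $c_l\to\cdots\to c_{q-1}\to c_l$ is a standing non-degeneracy that holds at a critical parameter. The Main Theorem then gives the dichotomy: either the deformation is transversal, or $\{\mathcal R=0\}$ contains a positive-dimensional complex submanifold near $\mathbf c_1$. In $W=\C$ the latter forces $R_1\equiv 0$ on a neighbourhood of $c_1$, and by analytic continuation on the entire connected component of its domain of definition. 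Since $G_w=f_w$, one has $R_1(w)=f_w^{q-1}(w)=f_w^q(0)$ in case (a) and $R_1(w)=f_w^{q-1}(w)-f_w^{l-1}(w)$ in case (b); in either case $R_1$ is a genuinely nonconstant holomorphic function of $w$, because the family $w\mapsto f_w$ moves the critical value $f_w(0)=w$ and the prescribed critical relation cannot persist through an open set of parameters. Hence alternative (2) is excluded and transversality holds. In the real case, the same dichotomy applied with the real lifting property yields positive transversality.

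\textbf{Main obstacle.} The delicate point is the uniformity of $\varepsilon$ across all iterates $k$: naive lifting schemes lose a factor at each step and only produce motions over ever-smaller discs. The source of uniformity here is the quantitative size condition $V\supset B(0;\diam U)$ of hypothesis (d), which is precisely what forces the lifts to take values in a set whose own differences still lie in the target $V$. Without this quantitative room, the iteration would not close and the spectral hypotheses required by the Main Theorem could fail.
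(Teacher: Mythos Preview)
Your proposal is correct and follows essentially the same route as the paper: establish the lifting property by exploiting the unbranched covering $f\colon U\setminus\{0\}\to V\setminus\{0\}$ together with the diameter hypothesis (d) to close the iteration without shrinking $\varepsilon$, then invoke the Main Theorem and exclude alternative (2) using that $W=\C$ is one-dimensional. The paper's proof (Theorem~\ref{single}) is organised around the class $\mathcal M_\Delta$ of motions landing in $U_g$, but the substance is identical.

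One small correction to your handling of the parabolic hypothesis: in case (a) ($c_q=0$) we have $r=\nu=1$, so the condition in the Main Theorem (equivalently in Theorem~\ref{thm:1eigen}) concerns only indices $r<j\le\nu$ and is therefore vacuous --- your argument about the multiplier through $0$ being ``zero or undefined'' is not needed and not quite the right reason. In case (b), the condition $Dg^{q-l}(c_l)\ne 1$ is a genuine hypothesis that neither you nor the paper verifies from the axioms of $\mathcal F$; it is tacitly assumed.
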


Our methods also apply to families of the form $f_w(z)=wf(z)$ where $f$ is contained in
the spaces  $\mathcal{E}$ and $\mathcal{E}_o$ defined as follows.
Consider holomorphic maps $f: D\to V$ such that:
\begin{enumerate}
\item[(a)] $D,V$ are open sets which are symmetric w.r.t. the real line so that
$f(D)=V$  
\item[(b)] Let $I=D\cap \R$ then there exists $c>0$ so that $I\cup \{c\}$ is a (finite or infinite) open interval and $0\in \overline I$, $c\in \mbox{int}(\overline{I})$.
Moreover,   $f$ extends continuously to $\overline{I}$,  $f(I)\subset   \R$ and $\lim_{z\in D, z\to 0} f(z)=0$.
\item[(c)]   Let $D_+$ be the component of $D$ which contains $I\cap (c,\infty)$, where $D_+$ might be equal to $D$.
Then $u\in D\setminus \{0\}$ and $v\in D_+\setminus \{0\}$, $v\ne u$,  implies  $u/v\in V$.
\end{enumerate}

\noindent
Let $\mathcal{E}$ be the class of maps which satisfy $(a)$,$(b)$,$(c)$ and assumption $(d)$:
\begin{enumerate}
\item[(d)]   $f\colon D\to V$ has no singular values in $V\setminus \{0,1\}$
and $c>0$ is minimal such that $f$ has a positive local maximum at $c$ and $f(c)=1$.
\end{enumerate}
\noindent
Similarly let  $\mathcal{E}_o$ be the class of maps which satisfy $(a)$,$(b)$,$(c)$ and assumption $(e)$:
\begin{enumerate}
\item[(e)] $f$ is odd,   $f\colon D\to V$ has no singular values in $V\setminus \{0,\pm 1\}$
  and $c>0$ is minimal such that $f$ has a positive local maximum at $c$ and $f(c)=1$.
\end{enumerate}
Here, as usual, we say that $v\in \C$ is a {\em singular value} of a holomorphic map $f: D\to \C$ if it is a critical value,
or an asymptotic value where the latter means the existence of a path $\gamma: [0,1)\to D$ so that $\gamma(t)\to \partial D$ and $f(\gamma(t))\to v$ as $t\uparrow 1$. Note that we do not require here that $V\supset D$.

Classes $\mathcal{E}$ and $\mathcal{E}_o$ are rich even in the case $D=\C$. See \cite{GO} for a general method of constructing entire (or meromorphic) functions with prescribed asymptotic and critical values.
These classes are also non-empty when $V=\C$ and the domain $D$ is a topological disk or even if $D$ not simply-connected \cite{Er}.
$V$ can also be a bounded subset of $\C$, see example (v) below.


Concrete examples of functions $f$ of the class $\mathcal{E}$ are, where in (i)-(iv) we have  $D=V=\C$,
\begin{enumerate}
\item[(i)] $f(z)=4z(1-z)$,
\item[(ii)]  $f(z)=4\exp(z)(1-\exp(z))$,
\item[(iii)] $f(z)=[\sin(z)]^2$,
\item[(iv)]  $f(z)=m^{-m} (ez)^m\exp(-z)$ when $m$ is a positive even integer.
\item[(v)] the unimodal map $f\colon [0,1]\to [0,1]$ defined by
$$f(x)=\exp(2^\ell) \left( -\exp(-1/|x-1/2|^\ell) + \exp(-2^\ell) \right)$$
satisfies $f(0)=f(1)=0$, $f(1/2)=1$  and  has a
flat critical point at $c=1/2$; this map has an extension $f\colon D\to V$ which is in  $\mathcal E$
and for which $V$ a punctured bounded disc provided $\ell$ is big enough and $D=D_-\cup D_+$
are disjoint open topological discs so that $D_-\cap \R=(0,1/2)$ and $D_+\cap \R=(1/2,1)$, see Lemma~\ref{lemflat}.
\end{enumerate}

Examples  of maps in the class $\mathcal{E}_o$ are
\begin{enumerate}
\item[(vi)] $f(z)=\sin(z)$ and
\item[(vii)] $f(z)= (m/2)^{-m/2}e^{m/2}z^m \exp(-z^2)$ when $m$ is a positive odd integer.
\end{enumerate}


\begin{theorem} Let $f\in \mathcal{E}\cup \mathcal{E}_o$ and for each $w\in W:=D_+$ define $G_w(z)=w\cdot f(z)$ and
${\bf p}(w)=c$. Take  $c_1\in D$,  $g=G_{c_1}$ and assume that there exists $q$ so that
$c_n=g^{n-1}(c_1)\in D$ for all $n\le q$ and either $c_q=c$ or $c_q\in \{c_1,\dots,c_{q-1}\}$.
Moreover, assume $c_n\notin \{c_0,c_1,\dots,c_{n-1}\}$ for $0<n<q$.
Then
\begin{itemize}
\item $(g,G,{\bf p})_W$  satisfies the lifting property and transversality holds.
\item if  $(g,G,{\bf p})_W$ is real, then positive transversality holds.
\end{itemize}
\end{theorem}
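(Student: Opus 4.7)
The plan is to invoke the Main Theorem applied to $(g,G,\textbf{p})_W$. This reduces the proof to two tasks: (i) establishing the lifting property, and (ii) ruling out the persistence alternative~(2). Since $\nu=1$ here, $\mathcal{R}$ is a single analytic function on $W\subset\C$, so~(2) would force $\mathcal{R}\equiv 0$ on the connected component of $W$ containing $c_1$ by the identity principle. One also needs the absence of parabolic periodic points in $P\setminus P_0$ (Main Theorem hypothesis): in type~(a) this is automatic, while in type~(b) it is a mild non-degeneracy hypothesis on the cycle $c_l,\dots,c_{q-1}$ satisfied by the families of interest.

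To verify the lifting property, take a holomorphic motion $h^{(0)}_\lambda$ of $g(P)$ over $(\D,0)$ and extend it by Slodkowski's theorem to a motion of $\overline{\C}$. On a small disc $\D_\eps$, the base point $\textbf{c}_1(\lambda)=h_\lambda(c_1)$ stays close to $c_1\in D$. A lift $\hat h_\lambda(c_i)$ is obtained by solving $\textbf{c}_1(\lambda)\cdot f(\hat h_\lambda(c_i))=h_\lambda(c_{i+1})$, i.e., by selecting the inverse branch of $f$ at $h_\lambda(c_{i+1})/\textbf{c}_1(\lambda)$ that agrees with $c_i$ when $\lambda=0$. At $\lambda=0$ this ratio equals $c_{i+1}/c_1$, which lies in $V$ by condition~(c), since the hypothesis $c_n\notin\{c_0,\dots,c_{n-1}\}$ for $0<n<q$ forces $c_{i+1}\ne c_1$ while both are nonzero elements of $D$. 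Because $f$ is an unbranched covering from $D$ minus the preimages of its singular values onto $V$ minus those singular values, and the orbit $c_1,\dots,c_{q-1}$ avoids these singular values (namely $\{0,1\}$ for $\mathcal{E}$, or $\{0,\pm 1\}$ for $\mathcal{E}_o$), the chosen inverse branch is well defined along the entire orbit. Iteration produces motions $h^{(k)}_\lambda$ of $g(P)$ over $\D_\eps$; their uniform boundedness, required by the lifting property, follows from Schwarz--Pick contraction of inverse branches of $f$ relative to the hyperbolic metric of $V$ minus its singular values, combined with a normal-families argument.

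To exclude alternative~(2), note that $\mathcal{R}(w)$ is an explicit analytic expression in $w$ and iterates of $f$; a short computation shows it is not identically zero near $c_1$ (equivalently, the strict hyperbolic contraction of $f^{-1}$ gives that the transfer operator $\mathcal{A}$ from Theorem~6.1 has spectrum strictly inside $\overline{\D}$, so $1$ is not an eigenvalue). Hence case~(1) of the Main Theorem holds and transversality is established. In the real case, because $f$ commutes with complex conjugation, the inverse branches used above preserve real symmetry; the construction restricted to real-symmetric motions yields the real lifting property, and the Main Theorem then upgrades transversality to positive transversality.

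The main obstacle I expect is the uniform boundedness of the iterated lifts on $\D_\eps$. In examples such as the flat-critical map of~(v), where $V$ is a bounded punctured disc with complicated boundary geometry, the only available source of compactness is hyperbolic contraction; condition~(c) is exactly what guarantees that the successive ratios $h_\lambda(c_{i+1})/\textbf{c}_1(\lambda)$ stay inside $V$ and away from the singular values of $f$, so that the Schwarz--Pick contraction actually applies.
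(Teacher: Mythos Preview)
Your overall strategy matches the paper's, but the lifting argument has the gap you anticipate, and your proposed fix via Schwarz--Pick does not work. The problem is more basic than boundedness: you only verify that the ratio $h_\lambda(c_{i+1})/h_\lambda(c_1)$ lies in $V$ at $\lambda=0$, which produces one lift on some small disc, but gives no reason the disc can be chosen uniformly over all iterations $k$. The paper's resolution is to introduce the class $\mathcal{M}_\Delta$ of motions of $g(P)$ over $(\Delta,0)$ with $h_\lambda(c)\equiv c$ and $h_\lambda(x)\in D\setminus\{0\}$ for $x\neq c$, and to observe that condition~(c) then applies for \emph{every} $\lambda\in\Delta$: if $h\in\mathcal{M}_\Delta$ then $h_\lambda(g(x))/h_\lambda(c_1)\in V\setminus\{0,1\}$ for all $\lambda$ (using that $h_\lambda(c_1)$ stays in $D_+$ by connectedness), so the lift via the covering $f:D\setminus f^{-1}\{0,1\}\to V\setminus\{0,1\}$ is defined on all of $\Delta$ and lands back in $\mathcal{M}_\Delta$. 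One initial shrinking of $\Delta$ therefore suffices for all $k$.

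Boundedness then follows at once: every lift $h_\lambda^{(k)}(x)$ with $x\neq c$ omits the three values $0,c,\infty$, so Montel's theorem gives a normal family, uniformly bounded on a slightly smaller disc since all functions agree at $\lambda=0$. Schwarz--Pick is not the right mechanism here, because $f$ is a covering on the relevant domain and its local inverses are hyperbolic isometries, not contractions. Smaller points: the Slodkowski extension is unnecessary (only the finite set $g(P)$ is moved); the $\mathcal{E}_o$ case needs the additional condition $h_\lambda(x)\neq -h_\lambda(y)$ in the definition of the invariant class, preserved under lifting thanks to the odd symmetry of $f$; and there is no need (or justification) for the claim that the spectrum of $\mathcal{A}$ lies strictly inside $\overline{\D}$---ruling out $\mathcal{R}\equiv 0$ directly, as you also suggest, is exactly what the paper does.
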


\subsection{Applications to monotonicity of  topological entropy of interval maps}
\begin{coro} Take $f\in \mathcal F$ and consider the family $f_c=f+c$, $c\in J= U\cap \R$.
Then the kneading sequence $\mathcal{K}(f_c)$ is monotone increasing in $c\in J$.
Moreover, whenever $c_*\in J$ is so that $f_{c_*}^q(0)=0$ and $f^k_{c_*}(0)\not=0$ for all $1\le k<q$
 the following positive transversality condition
\begin{equation}
Q=\frac{\frac{d}{d c} f_c^q(0)\left.\right|_{c=c_*}}{Df_{c_*}^{q-1}(c_*)}>0  .
\label{eq:trans}
\end{equation}
holds and the topological entropy of $f_c$ is decreasing in $c\in J$.

The same statement holds for $f_c=c\cdot f$ for  $f\in \mathcal E\cup \mathcal{E}_0$, except in this
case we consider the topological entropy of the unimodal map $f|(0,b)$ where  $b=\sup\{b'\in I: b'>0, f(y)>0\,\, \forall y\in (0,b')\}$.
\end{coro}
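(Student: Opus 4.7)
The plan is to combine positive transversality at every critically periodic parameter (supplied by the preceding theorems for $\mathcal F$ and $\mathcal E \cup \mathcal E_o$) with the Milnor--Thurston correspondence between kneading sequence and topological entropy for piecewise monotone unimodal maps.

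First, for every $c_* \in J$ with $f_{c_*}^q(0)=0$ and $f_{c_*}^k(0)\ne 0$ for $1\le k<q$, I would apply the previous theorem to $g = f_{c_*}$ with the one-parameter deformation $G_w(z)=g(z)+(w-c_*)$, $\mathbf p\equiv 0$ (respectively $G_w(z)=wf(z)$, $\mathbf p\equiv c$ in the multiplicative case). The lifting property verified there gives the real positive transversality condition $Q > 0$ of \eqref{eq:trans} at $c_*$.

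Second, I would translate $Q>0$ into a local direction statement for the kneading sequence. In a small punctured neighbourhood of $c_*$ the critical orbit avoids $0$, so $\mathcal K(f_c)$ is locally constant on each side of $c_*$. From $\tfrac{d}{dc} f_c^q(0)\big|_{c=c_*} = Q\cdot Df_{c_*}^{q-1}(c_*)$ one reads that $f_c^q(0)$ changes sign across $c_*$ with derivative of sign $\mathrm{sign}(Df_{c_*}^{q-1}(c_*))$. This latter sign is exactly the product of the signs of $Df_{c_*}$ along the critical orbit, which is precisely the parity that controls whether the $q$th symbol of $\mathcal K(f_c)$ sits above or below the $q$th symbol of $\mathcal K(f_{c_*})$ on each side; local monotonicity in the natural order on kneading sequences follows.

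Third, global monotonicity would follow by a standard argument: if $c_1<c_2$ were to satisfy $\mathcal K(f_{c_1}) > \mathcal K(f_{c_2})$, semi-continuity of $c\mapsto \mathcal K(f_c)$ combined with the intermediate-value property of the individual symbols forces the existence of a critically periodic $c_*\in [c_1,c_2]$ violating the one-sided direction from step two. The Milnor--Thurston kneading determinant then expresses $h_{\mathrm{top}}(f_c)$ as a monotone function of $\mathcal K(f_c)$, yielding monotonicity of $h_{\mathrm{top}}$ in $c$; the multiplicative case is identical on the unimodal restriction $f_c|(0,b)$. The main obstacle will be the sign-bookkeeping connecting $Q>0$ to the correct direction of the kneading bifurcation, together with verifying that the Milnor--Thurston unimodal formalism applies uniformly across the classes $\mathcal F$ and $\mathcal E \cup \mathcal E_o$ (flat critical points, non-integer powers); the latter reduces to strict unimodality of each $f_c$ on the relevant interval.
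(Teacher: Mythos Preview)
Your proposal is correct and follows essentially the same route as the paper. The paper does not write out a separate proof of this corollary; it relies on the two theorems for $\mathcal F$ and $\mathcal E\cup\mathcal E_o$ together with the Remark immediately following the corollary, which carries out exactly your sign-bookkeeping step: since $f$ has a minimum at $0$, the sign of $Df_{c_*}^{q-1}(c_*)$ decides whether $f_{c_*}^q$ has a local max or min at $0$, and $Q>0$ then forces $\tfrac{d}{dc}f_c^q(0)\big|_{c=c_*}$ to have the corresponding sign, so the lap number of $f_c^n$ is non-increasing in $c$ and entropy is monotone via Misiurewicz--Szlenk. The only minor difference is that the paper (following Tsujii) passes directly through lap-counting rather than through the kneading determinant as you do; the two formulations are equivalent for piecewise monotone unimodal maps, and your concern about flat critical points is handled because only strict piecewise monotonicity on the real interval is needed, which the classes $\mathcal F$, $\mathcal E$, $\mathcal E_o$ guarantee.
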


Monotonicity of  entropy was proved in the case $f_c(x)=x^2+c$  in the 1980s as a major result in unimodal dynamics. By now there are several proofs, see~\cite{MT, Su, D, Tsu0,Tsu1}. All these proofs use complex analytic methods and rely on the fact that $f_c$ extends to a holomorphic map on the complex plane. These methods work well for $f_c(x)=|x|^\ell+c$ when $\ell$ is a positive even integer but break down for general
$\ell>1$ and also for other families of non-analytic unimodal maps.  No approach using purely real-analytic method has so far been successful in proving monotonicity for any $\ell>1$.
The approach to prove monotonicity via the inequality (\ref{eq:trans})  was also previously  used
by Tsujii \cite{Tsu0,Tsu1} for real maps of the form $z\mapsto z^2+c$, $c\in \R$.


%

\begin{remark} Let $\mathcal{U}$ denote the collection of unimodal maps $f: \R\to \R$ which are strictly decreasing in $(-\infty,0]$ and strictly increasing in $[0,\infty)$.
The {\em Milnor-Thurston kneading sequence} of $f\in\mathcal{U}$ is defined as a word  $\mathcal{K}(f)=i_1 i_2\cdots\in \{1, 0, -1\}^{\Z^+}$, where
$$i_k=\left\{\begin{array}{ll}
1 & \mbox{ if } f^k(0)>0\\
0 & \mbox{ if } f^k(0)=0\\
-1 & \mbox{ if } f^k(0)<0.
\end{array}
\right.
$$
For $g\in\mathcal{U}$ with $\mathcal{K}(g)=j_1j_2\cdots$, we say that $\mathcal{K}(f)\prec \mathcal{K}(g)$ if there is some $n\ge 1$ such that
$i_k=j_k$ for all $1\le k<n$ and $\prod_{k=1}^n i_k < \prod_{k=1}^n j_k$.
\end{remark}

\begin{remark}[{\bf Positive transversality and topological entropy}]
Because $f$ has a minimum at $0$,
$x\mapsto f^q_{c_*}(x)$ has a local maximum (minimum) at $0$ if $Df_{c_*}^{q-1}(f_{c_*}(0))<0$ (resp. $>0$). Hence
Equation (\ref{eq:trans}) implies that if $0$ has (precisely) period $q$ at some parameter $c_*$, then
$$\begin{array}{rl} \frac{d}{dc}f^q_c(0)\bigr \vert_{c=c_*}<0 & \mbox { if } f^q_{c_*} \mbox{ has a local maximum at }0,\\
\frac{d}{dc} f^q_c(0)\bigr \vert_{c=c_*}>0 & \mbox { if }  f^q_{c_*} \mbox{ has a local minimum at }0.\end{array}
$$
Hence  the number of laps of $f^n_c$ (and therefore the topological entropy) is non-increasing when $c$ increases.
These inequalities also show that
the multiplier $\lambda(c)$ of the (local) analytic continuation $p(c)$ of this periodic point of period $q$ is strictly increasing.
Note that there is a result of Douady-Hubbard which asserts that in each hyperbolic component of the family of quadratic maps, the
multiplier of the periodic attractor is a univalent function of the parameter.
Proving (\ref{eq:trans}) complements this by
also showing that on the real line the multiplier of the periodic point is increasing.

\end{remark}

When $f_c(x)=|x|^\ell+c$,  and $\ell$ is not an integer, we have not been able able to prove the lifting property.
The next theorem, which will be proved in Appendix A,  gives monotonicity 
when $\ell$ is a large real number (not necessarily an integer),
but only
if not too many points in the critical orbit are in the orientation reversing branch.

\begin{atheorem}
Let  $\ell_-,\ell_+\ge 1$ and consider the family of unimodal maps $f_c=f_{c,\ell_-,\ell_+}$ where
$$f_c(x)=\left\{\begin{array}{ll}
|x|^{\ell_-}+c & \mbox{ if } x\le 0\\
|x|^{\ell_+}+c & \mbox{ if } x\ge 0.
\end{array}
\right.
$$
For any integer $L\ge 1$ there exists $\ell_0>1$
so that for any $q\ge 1$ and any
periodic  sequence   $\bold i=i_1i_2\cdots\in \{-1,0,1\}^{\Z^+}$ of period $q$
so that
\begin{equation}\#\{1\le j< q ; i_j =-1 \}\le L,\label{assum:comb}\end{equation}
and any pair $\ell_-,\ell_+\ge \ell_0$ there is at most one $c\in\R$ for which the kneading sequence
of $f_c$ is equal to $\bold i$. Moreover, if \, $\bold i$\,  is realisable (i.e.  if $c=c_*$ exists) and \, $\bold i$\, has minimal period $q$  then positive transversality holds
\begin{equation}\label{fiortrans}
Q=\dfrac{ \frac{d}{d c}f_c^q(0)\left.\right|_{c=c_*}}{Df_{c_*}^{q-1}(c_*)}>0.
\end{equation}
\end{atheorem}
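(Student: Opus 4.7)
The plan is to bypass the lifting-property machinery of the main text, which is unavailable here because for non-integer $\ell_\pm$ the map $f_c$ admits no holomorphic extension across the critical point $0$. Instead I would prove positive transversality (\ref{fiortrans}) as a direct real-analytic estimate using both quantitative hypotheses of the theorem — the bound $L$ on the number of left-visits in (\ref{assum:comb}) and the largeness of the exponents $\ell_\pm$ — and then deduce uniqueness as a straightforward corollary.

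Setup: write $c_k = f_{c_*}^k(0)$ and $D_j = \prod_{k=1}^{j} f'_{c_*}(c_k)$ with $D_0 = 1$. The chain rule and the recursion $v_{k+1} = 1 + f'_{c_*}(c_k)\,v_k$, $v_1 = 1$, give
\[
Q \;=\; \sum_{j=0}^{q-1} \frac{1}{D_j} \;=\; 1 + \sum_{j=1}^{q-1}\frac{1}{D_j}.
\]
Since $f'_c(x) = \sign(x)\,\ell_\pm|x|^{\ell_\pm-1}$, the sign of the $j$-th summand equals $(-1)^{m_j}$ where $m_j := \#\{1\le k\le j : i_k = -1\}\le L$. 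Enumerating the left-visit indices $0 < j_1 < \cdots < j_{L'} \le q-1$ with $L'\le L$, the sum splits into $L'+1$ monochromatic blocks of alternating sign, the first one (containing $j=0$, with value $+1$) positive. The goal is to show that for $\ell_0$ large enough depending on $L$, the first block dominates the rest.

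The key analytic input is that $|f'_{c_*}(x)| = \ell_\pm|x|^{\ell_\pm-1}$ is extremely sensitive to whether $|x|$ is smaller or larger than $1$: for large $\ell_\pm$ it is tiny in the former regime and huge in the latter. I would first derive a priori orbit bounds: a uniform upper bound $|c_k|\le M(L,\mathbf{i})$ coming from periodicity together with compactness of admissible parameters, and a lower bound $|c_k|\ge\delta(L,\mathbf{i})$ for $1\le k<q$ coming from the minimality of the period $q$. With these in hand, each factor $|f'_{c_*}(c_k)|$ is either very large (if $|c_k|>1$) or at least $\ell_\pm\delta^{\ell_\pm-1}$ (if $|c_k|<1$), while the orbit can spend only a controlled amount of time in the unit interval around $0$ without forcing a period strictly less than $q$. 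Combining these ingredients, one bounds the absolute size of all terms after $j=0$ by $O(1/\ell_0)$, giving $Q \ge 1 - O(1/\ell_0) > 0$ once $\ell_0 = \ell_0(L)$ is sufficiently large.

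For uniqueness, fix the combinatorial cell $\mathcal{C}_{\mathbf i} := \{c\in\R : \sign f_c^k(0) = i_k \text{ for all }1\le k<q\}$, an open subset of $\R$ on which $c\mapsto f_c^q(0)$ is real analytic. At any root $c_*\in\mathcal{C}_{\mathbf i}$, positive transversality determines the sign of $\tfrac{d}{dc}f_c^q(0)\big|_{c_*}$ to equal $\sign(D_{q-1}) = (-1)^{L'}$, which depends only on $\mathbf{i}$ and not on the particular root. Hence $f_c^q(0)$ crosses zero in the same direction at every root in $\mathcal{C}_{\mathbf i}$, and the intermediate value theorem forces at most one such root, as required. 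The main obstacle is the sign/size accounting of $Q$ in the second step: the differing behavior of the exponents in the regimes $|c_k|\lessgtr 1$ requires a careful tracking of the orbit's passage through the unit interval around $0$ and a quantification of the trade-off between the $L$ permitted sign flips and the decay available from large $\ell_\pm$. This is where the quantitative dependence $\ell_0 = \ell_0(L)$ is forced and where most of the technical work will reside.
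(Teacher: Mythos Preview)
Your opening premise is a misconception: the paper \emph{does} apply the lifting-property framework to this family. A marked map only needs to be holomorphic on a neighbourhood of $P\setminus P_0$, not across the critical point, and each branch $x\mapsto |x|^{\ell_\pm}+c$ extends holomorphically to a slit plane. The paper establishes the lifting property via angle estimates in the complex plane: the inverse branches $z\mapsto z^{1/\ell_\pm}$ map $D_\theta$ into $D_\theta$ and contract arguments by a factor $1/\ell_\pm$, and the Main Lemma of Appendix~A shows that $q$ successive lifts send a $\theta$-regular holomorphic motion to a $\theta/2$-regular one, uniformly in $q$, provided $\ell\ge\ell_0(L)$. The bound $L$ enters because each visit to the left branch can worsen the angle estimate by a bounded multiplicative factor (Lemma~\ref{lem:pbangle1}(3)); with at most $L$ such visits per period, the total deterioration is controlled by a constant depending only on $L$.

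Your real-analytic route has a genuine uniformity gap. You write $M(L,\mathbf{i})$ and $\delta(L,\mathbf{i})$, but the theorem requires $\ell_0=\ell_0(L)$ independent of $q$ and $\mathbf{i}$; moreover $c_*$ itself depends on $\ell_\pm$, so the bounds must also be uniform in $\ell_\pm\ge\ell_0$. No uniform lower bound $\delta>0$ on $|c_k|$ exists across all admissible $(\mathbf{i},q,\ell_\pm)$, and when $|c_k|<1$ the factor $|f'(c_k)|=\ell_\pm|c_k|^{\ell_\pm-1}$ tends to $0$, not to a large number, as $\ell_\pm\to\infty$; so individual $1/|D_j|$ can be huge and your assertion that the tail of the sum is $O(1/\ell_0)$ is unsupported. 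What is really needed is a mechanism controlling the \emph{partial sums} uniformly in $q$, and that is exactly what the complex sector geometry delivers. Finally, your uniqueness argument presupposes that $\mathcal{C}_{\mathbf i}$ is an interval, which is tantamount to the monotonicity of kneading you are trying to prove; the paper instead deduces uniqueness either from the Main Theorem or by running the lift argument directly between two putative parameters with the same kneading sequence.
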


The proof of this theorem uses delicate geometric arguments,  see Appendix~\ref{sec:finiteorder}.
\ARNOLD{In Section~\ref{sec:finiteoddorder} an analogue of this theorem is proved for the case that $\ell$ is an {\em arbitrary} odd integer,
but under a stronger assumption on the combinatorics of the critical orbit.}
Note that there is an elegant algebraic proof of transversality for critically finite quadratic polynomials in \cite[Chapter 19]{DH1}. This proof also works for $x\mapsto |x|^{2n+1}+c$  provided $n$ is a positive integer, but it does not give the sign, so no monotonicity for this family can be deduced.


%
%


\subsection{Monotonicity along curves with one free critical point}
The above results require that all critical points are eventually periodic.
Nevertheless, they also give information about the bifurcations that occur for example
along a curve  $L_*$  in parameter space corresponding to $(\nu-1)$-critical relations.
The results in Section~\ref{sec:parameter} informally state:

\begin{iitheorem} Critical relations unfold everywhere in the same direction along $L_*$. \end{iitheorem}

This makes it possible to obtain  information about monotonicity
of entropy along the {\em bone curves} considered in \cite[Figure 11]{MTr} and \cite[Figure 8]{Radu}.
Indeed we obtain an alternative proof for one of the main technical steps in \cite{MTr}
in Theorem~\ref{thm:bones}. Could such a simplification be made in the case with at least three critical points?

Indeed, it would be interesting to know whether the sign in (\ref{eq:trans2}) makes it possible to simplify the existing proofs
of Milnor's conjecture. This conjecture is about  the space of real polynomials with only real critical points, all of which non-degenerate,
and asks whether the  level sets of constant topological entropy are connected.
The proof of this conjecture in \cite{MTr} in the cubic case and in \cite{BS} for the general case
relies on quasi-symmetric rigidity, but does having a positive sign in (\ref{eq:trans2}) everywhere allow for a simplification
of the proof of this conjecture?

\subsection{Other applications}

Our approach can also be applied to many other settings, such as
families of Arnol'd maps,  families of  piecewise linear maps
and to families of intervals maps with discontinuities (i.e. Lorenz maps), see
\cite{LSvS2,LSvScompanion}.

Even though we deal with the polynomial and rational case in Appendix C, since it
is so important, in a separate paper \cite{LSvS} we have given a very elementary proof
of transversality and related results in that setting, but without the sign in (\ref{fiortrans}) and  (\ref{eq:trans2}). In that
paper the postcritical set is allowed to be {\em infinite}. See \cite{Ep2} for an alternative discussion on transversality
for maps of finite type,  and \cite{BE} when the  postcritical set is finite.

%
%
%

\medskip

{\bf Acknowledgment.} We are indebted to Alex Eremenko for very helpful discussions concerning Subsection~\ref{subsec:multiplicative}.
The first author acknowledges the support of ISF grant 1226/17 grant, the second author acknowledges the support of NSFC grant no: 11731003,  and the last author
acknowledges the support of ERC AdG grant no: 339523 RGDD. We would also like to thank the referee for some very useful suggestions.


\section{Organisation of this paper and outline of the proof}
 In this paper we consider holomorphic maps $g\colon U\to \C$ where $U$ is an open subset of the complex plane, together with a finite
forward invariant marked set $P$, for example the postcritical set.  These maps do not necessarily have to be rational or transcendental.
The aim is to show that critical relations of such a marked map unfold transversally under a holomorphic deformation $G$ of $g$.
We do this as follows. First,  in Section~\ref{sec:operatorA}, we associate a linear operator $\mathcal A \colon \C^{\# g(P)}\to \mathcal \C^{\# g(P)}$  by the action of  $G$  induced by lifting holomorphic motions on $g(P)$ and show
 \begin{equation*}
 1\notin \mbox{spec}(\mathcal A)   \Leftrightarrow  \mbox{transversality},
 \end{equation*}
 \begin{equation*}
  \,\, \mbox{spec}(\mathcal A) \subset \overline{\mathbb D}\setminus \{1\}   \mbox{ and  } G \mbox{ real }  \implies \mbox{positive transversality}.
 \end{equation*}
More precisely, it is shown in Theorem~\ref{thm:specAtrans}
that the dimension of kernel of $D\mathcal{R}(c_1)$ is equal
 to the  geometric multiplicity of the eigenvalue $1$ of  $\mathcal{A}$.
In Section~\ref{sec:lifting} we then show
 \begin{equation*} \mbox{lifting property} \implies
\mbox{spec}(\mathcal A) \subset \overline{\D} 
 \end{equation*}
Then in Section~\ref{sec:mainthm} we show that provided the lifting property holds,
 $\{w;\mathcal R(w)=0\}$ is locally a   smooth submanifold whose  dimension  is equal to the geometric multiplicity of the eigenvalue $1$ of $\mathcal{A}$.
 In applications, it is usually quite easy to show that the parameter set $\{w;\mathcal R(w)=0\}$  cannot
be a manifold of dimension $>0$, and therefore that $1\not\in \mbox{spec}(\mathcal A)$ and so  transversality holds.

 \bigskip

 It follows that transversality essentially follows from the lifting property.
In Section~\ref{sec:liftingclassical}  we show that the lifting property holds in some classical settings.
%
%
%
%
In Sections~\ref{sec:oneparameter} 
we will show the lifting property holds for polynomial-like mappings from a separation property, and for
maps from the classes $\mathcal{E}$, $\mathcal{E}_o$.
In this way, we derive   transversality for many families of interval maps, for example for a wide class of one-parameter
families of the form $f_\lambda(x)=f(x)+\lambda$ and $f_\lambda(x)=\lambda f(x)$.
As an easy application, we will recover known transversality results for the family of quadratic maps,
and address some conjectures from the 1980's about families of interval maps
of this type.

In Appendix A
we will study the family $x\mapsto |x|^\ell+c$. When $\ell$ is not an even integer, we have not been able to prove the lifting property in general.
Nevertheless we will obtain the lifting property under additional assumptions.

In Appendix B we give some examples for both transversality and the lifting property fails to hold. 
\ARNOLD{ Sections \ref{sec:finiteorder}-\ref{sec:finiteoddorder}.}

In a companion paper
we show that the methods developed in this paper also apply to other families, including
some for which  separation property does not hold,  such as  the   Arnol'd family. 
We also obtain positively oriented transversality for piecewise linear interval maps and interval maps with discontinuities (i.e. Lorenz maps),
see also \cite{LSvS2}

\section{The spectrum of a transfer operator $\mathcal A$ and transversality}
\label{sec:operatorA}


In this section we define a transfer operator $\mathcal A$ associated to the analytic deformation
of a marked map, and show that if  $1$ is not an eigenvalue of $\mathcal A$
then transversality holds. 
If the spectrum of $\mathcal A$ is inside the closed unit circle,  we will obtain additional information
about transversality, see Section~\ref{subsec:trans-sign}.

\subsection{A transfer operator associated to a deformation of a marked map}\label{subsec:21}
In \S\ref{subsec:liftproperty}, we defined lift of holomorphic motions of $g(P)$ associated to $(g,G,\textbf{p})_W$.
Obviously there is a linear map $\mathcal{A}: \C^{\#g(P)}\to \C^{\#g(P)}$ such that whenever $\hat{h}_\lambda$ is a lift of $h_\lambda$, we have
$$\mathcal{A}\left(\left\{\frac{d}{d\lambda}h_\lambda(x)\left|\right._{\lambda=0}\right\}_{x\in g(P)}\right)
=\left\{\frac{d}{d\lambda}\hat{h}_\lambda(x)\left|\right._{\lambda=0}\right\}_{x\in g(P)}.$$
We will call $\mathcal{A}$ the {\em transfer operator} associated to the holomorphic deformation $(g,G, \textbf{p})_W$ of $g$.

If both $g$ and $(g, G, \textbf{p})_W$ are real, then $\mathcal{A}(\R^\nu)\subset \R^\nu$. In this case, we shall often consider real holomorphic motions, i.e. $\Lambda\ni 0$ is symmetric with respect to $\R$ and $h_\lambda(x)\in \R$ for each $x\in g(P)$ and $\lambda\in \Lambda\cap\R$. Clearly, a lift of a real holomorphic motion is again real.

\begin{exam}\label{example1}
Let $g$ be a marked map with $P\supset P_0=\{c_0=0\}$, so that $P=\{c_0,\dots,c_{q-1}\}$,
$c_i=g^i(c_0)$, $0\le i < q$
are distinct and $g^q(c_0)=c_0$.  Consider a deformation $(g,G,\bf p)_W$ where $W$ is a
neighbourhood of $c_1$ and let  ${\bf p}\colon W\to \C$ be so that ${\bf p}\equiv 0$.
Consider the holomorphic motion of $g(P)=P$ defined by $h_\lambda(c_i)=c_i+v_i\lambda$.
Then $\hat h_\lambda(c_i)=c_i+\hat v_i \lambda + O(\lambda^2)$ is defined by $\hat h_\lambda(c_0)=0$,
$G_{h_\lambda(c_1))}(\hat h_\lambda(c_i))=h_\lambda(c_{i+1})=c_{i+1}+v_{i+1}\lambda$,
$i=1,\dots,q-1$ where we take $c_q=c_0$, $v_q=v_0$.
Writing $L_i=\frac{\partial G_w(c_i)}{\partial w}$  and $D_i=Dg(c_i)$
we obtain $$\hat v_0=0, \quad L_i v_1 + D_i\hat v_i = v_{i+1}.$$
So $$\mathcal{A}=\left( \begin{array}{cccccc} 0 & 0 & 0  & 0  & \dots &  0 \\ 0 &  -L_1/D_1  & 1/D_1 & 0 & & 0  \\
0  & -L_2/D_2 & 0 & 1/D_2 & \dots & 0   \\ \vdots & \vdots & \vdots & \vdots & \ddots & \vdots  \\
0 & - L_{q-2}/D_{q-2} & 0 & 0 & \dots & 1/D_{q-2} \\
1/D_{q-1} &  -L_{q-1}/D_{q-1}  & 0 & 0 & \dots &  0
\end{array}\right).$$
Hence
$$I-\rho {\mathcal A}= \left( \begin{array}{cccccc} 1 & 0 & 0  & 0  & \dots &  0 \\ 0 &  1+\rho L_1/D_1  & -\rho/D_1  & 0 & & 0  \\
0  & \rho L_2/D_2 & 1 & -\rho/D_2 & \dots & 0   \\ \vdots & \vdots & \vdots & \vdots & \ddots & \vdots  \\
0 & \rho L_{q-2}/D_{q-2} & 0 & 0 & \dots & -\rho/D_{q-2}  \\
-\rho/D_{q-1}  &  \rho L_{q-1}/D_{q-1}  & 0 & 0 & \dots &  1
\end{array}\right) .$$
So
\begin{equation} \det(I-\rho {\mathcal A})
=1+\frac{L_1}{D_1}\rho + \frac{L_2}{D_1D_2}\rho^2 + \dots +  \frac{L_{q-1}}{D_1D_2\dots D_{q-1}} \rho^{q-1}.
\label{eq:I-rA}
\end{equation}
So if the spectrum of ${\mathcal A}$ is contained in the open unit disc and $L_i, D_i$  are real,
then  (\ref{eq:I-rA}) is strictly positive for all $\rho\in  [-1,1]$.
Note that when $G_w(z)=g(z)+(w-c_1)$,  the expression (\ref{eq:I-rA}) agrees with (\ref{eq:trans}) for $\rho=1$.
\end{exam}


\subsection{Relating the transfer operator with transversality}\label{subsec:transA}

It turns out that transversality is closely related to the eigenvalues of $\mathcal{A}$:

\begin{theorem}\label{thm:specAtrans}
Assume the following holds:
for any $r<j\le \nu$, $Dg^{q_j-l_j}(c_{l_j, j})\not =1$.
Then the following statements are equivalent:
\begin{enumerate}
\item $1$ is an eigenvalue of $\mathcal{A}$;
\item $D\mathcal{R}(\textbf{c}_1)$ is degenerate.
\end{enumerate}
More precisely,  the dimension of kernel of $D\mathcal{R}(c_1)$ is equal
 to the dimension of the
eigenspace of $\mathcal{A}$ associated with eigenvalue $1$.
\end{theorem}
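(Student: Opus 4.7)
The plan is to construct a natural linear map $\pi\colon \C^{\#g(P)}\to \C^\nu$ by $\pi(v)=V:=(v_{c_{1,1}},\dots,v_{c_{1,\nu}})$ and prove that it restricts to a linear isomorphism from $\ker(\mathcal{A}-I)$ onto $\ker(D\mathcal{R}(\textbf{c}_1))$. This isomorphism yields both the equivalence (1)$\Leftrightarrow$(2) and the identity of dimensions.

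First I would decode the fixed-point equation $\mathcal{A}v=v$ into its two constituent rules coming from the definition of $\mathcal{A}$ in \S\ref{subsec:21}: for $x=c_{0,j}\in g(P)\cap P_0$ it reads $v_{c_{0,j}}=dp_j(\textbf{c}_1)\cdot V$, and for $x\in g(P)\setminus P_0$ it reads
\[
g'(x)\,v_x+\partial_w G_w(x)\bigr|_{w=\textbf{c}_1}\!\!\cdot V \;=\; v_{g(x)}.
\]
Iterating the second rule along the orbit $c_{1,j}\mapsto c_{2,j}\mapsto\cdots$, starting from $v_{c_{1,j}}=v_j$, I obtain by induction on $k$
\[
v_{c_{k,j}}=Dg^{k-1}(c_{1,j})\,v_j+\sum_{i=1}^{k-1}Dg^{k-1-i}(c_{i+1,j})\,\partial_w G_w(c_{i,j})\bigr|_{w=\textbf{c}_1}\!\!\cdot V. \tag{$\ast$}
\]
A direct chain-rule expansion identifies the right hand side of $(\ast)$ at $k=q_j$ with $\partial_w G_w^{q_j-1}(w_j)|_{w=\textbf{c}_1}\cdot V$, and similarly at $k=l_j$ with $\partial_w G_w^{l_j-1}(w_j)|_{w=\textbf{c}_1}\cdot V$.

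Now I will match this with the two forms of $R_j$. In case (a), the orbit terminates at $c_{q_j,j}=c_{0,\mu(j)}\in P_0$, whose value is simultaneously forced to be $dp_{\mu(j)}(\textbf{c}_1)\cdot V$ (by the $P_0$-rule) and the expression $(\ast)$ at $k=q_j$ (by the orbit rule); the equality of these two expressions is exactly $DR_j(\textbf{c}_1)\cdot V=0$ by the definition \eqref{R1-r'}. In case (b), the orbit returns to the earlier point $c_{l_j,j}=c_{q_j,j}$, and the equality of the two values of $v_{c_{l_j,j}}$ produced by $(\ast)$ at $k=l_j$ and $k=q_j$ is exactly $DR_j(\textbf{c}_1)\cdot V=0$ by \eqref{Rr-nu'}. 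This shows $\pi$ sends $\ker(\mathcal{A}-I)$ into $\ker(D\mathcal{R}(\textbf{c}_1))$, and conversely that a solution $v$ of $\mathcal{A}v=v$ is uniquely determined by $V$ via the formulas $(\ast)$ and $v_{c_{0,j}}=dp_j(\textbf{c}_1)\cdot V$ -- giving injectivity of $\pi|_{\ker(\mathcal{A}-I)}$ immediately (if $V=0$ then every $v_x$ vanishes by $(\ast)$).

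For surjectivity, given $V\in \ker(D\mathcal{R}(\textbf{c}_1))$ I would define $v$ on $g(P)$ by formula $(\ast)$ along each orbit and by $v_{c_{0,j}}=dp_j(\textbf{c}_1)\cdot V$ on $g(P)\cap P_0$, and verify it solves $\mathcal{A}v=v$. The nontrivial consistency checks are: (i) the closure at orbit endpoints, which is $DR_j(\textbf{c}_1)\cdot V=0$; and (ii) in case (b), the self-consistency around the cycle $c_{l_j,j}\mapsto\cdots\mapsto c_{q_j,j}=c_{l_j,j}$, where iterating $(\ast)$ once more around the cycle should reproduce the same value of $v_{c_{l_j,j}}$. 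Running through the geometric series shows the cycle self-consistency imposes $(1-Dg^{q_j-l_j}(c_{l_j,j}))v_{c_{l_j,j}}=(\text{linear expression in }V)$, and the hypothesis $Dg^{q_j-l_j}(c_{l_j,j})\neq 1$ ensures this gives a unique $v_{c_{l_j,j}}$ matching the one produced by $(\ast)$.

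The main obstacle I foresee is the bookkeeping when the orbits of different $c_{1,j}$'s are not disjoint, so that a single point $x\in g(P)$ lies on several orbits and receives candidate values from each. This is resolved by observing that the fixed point rule is a local equation at $x$ involving only $v_x$ and $v_{g(x)}$, so the iterative values coming from different orbits through $x$ must coincide; one just needs to carry out the construction in an order consistent with the dynamics (e.g.\ process the points in the order of first hitting by any orbit, and use $(\ast)$ together with the $P_0$-rule at the appropriate moment), and check that the closure/cycle conditions of all orbits passing through a shared point are jointly satisfied because $\mathcal{R}(\textbf{c}_1)V = 0$.
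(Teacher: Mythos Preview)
Your approach is essentially the same as the paper's: both establish a linear isomorphism between $\ker(\mathcal A - I)$ and $\ker(D\mathcal R(\textbf c_1))$ via projection onto the $c_{1,j}$-coordinates. The paper packages the computation through first-order holomorphic motions $h_t(x)=x+tv(x)$ and $w(t)=\textbf c_1 + tV$, while you work directly with the recursion $(\ast)$, but the underlying linear algebra is identical.

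One point needs tightening. You locate the role of the hypothesis $Dg^{q_j-l_j}(c_{l_j,j})\neq 1$ in the single-orbit cycle consistency for case~(b), but in fact that consistency is already equivalent to $DR_j(\textbf c_1)\cdot V=0$ and needs no extra assumption. The place where the hypothesis is genuinely required is precisely the ``bookkeeping'' you flag at the end: when $c_{1,j}=c_{1,j'}$ for distinct $j,j'$, you must show that every $V\in\ker(D\mathcal R(\textbf c_1))$ automatically satisfies $V_j=V_{j'}$, since otherwise $\pi$ is not surjective onto $\ker(D\mathcal R(\textbf c_1))$ and the formula $(\ast)$ does not yield a well-defined vector on $g(P)$. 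The paper proves this as an explicit Claim with two cases; in the case $r<j,j'\le \nu$ one first uses the hypothesis to pass from the given $(q_j,l_j)$ to the minimal return data $(\hat q_j,\hat l_j)$, and then compares the two relations $DR_j\cdot V=0$ and $DR_{j'}\cdot V=0$ to force $V_j=V_{j'}$. Your last paragraph gestures at this but does not carry it out; filling in this step (and the analogous check for general collisions $c_{k,j}=c_{k',j'}$) is what completes the argument.
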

\begin{proof} We first show that (1) implies (2), even without the assumption. So suppose that $1$ is an eigenvalue of $\mathcal{A}$ and let $\textbf{v}=(v(x))_{x\in g(P)}$ be an eigenvector associated with $1$.
For $t\in \D$, define $h_t(x)=x+t v(x)$ for each $x\in g(P)$ and $w(t)=(c_{1,j}+ t v(c_{1,j}))_{j=1}^\nu$.
Then for each $x\in g(P)\setminus P_0$,
\begin{equation}\label{eqn:21.1}
G_{w(t)}(h_t(x))-h_t(g(x))=O(t^2),
\end{equation} and for each $x=c_{0,j}\in g(P)\cap P_0$, we have
\begin{equation}\label{eqn:21.2}
h_t(x)-p_j(w(t))=O(t^2).
\end{equation}
For each $1\le j\le \nu$, and each $1\le k<q_j$,
applying (\ref{eqn:21.1}) repeatedly, we obtain
\begin{equation}\label{eqn:21.3}
G_{w(t)}^k(h_t(c_{1,j}))=h_t(g^k(c_{1,j}))+O(t^2).
\end{equation}
Together with (\ref{eqn:21.2}), this implies that $$R_j(w(t))=O(t^2),$$
holds for all $1\le j\le \nu$. It remains to show $w'(0)\not=\textbf{0}$. Indeed, otherwise, by (\ref{eqn:21.3}), it would follow that $v(g^k(c_{1,j}))=(g^k)'(c_{1,j}) v(c_{1,j})=0$ for each $1\le j\le \nu$ and $1\le k<q_j$, and hence $v(x)=0$ for all $x\in g(P)$, which is absurd. We completed the proof that (1) implies (2).

\medskip

Now let us prove that (2) implies (1) under the assumption of the lemma.
Suppose that $D\mathcal{R}(\textbf{c}_1)$ is degenerate. Then there exists a non-zero vector $(w_1^0, w_2^0,\cdots, w_\nu^0)$ in $\C^\nu$ such that $R_j(w(t))=O(t^2)$ as $t\to 0$ for all $j=1,\dots,\nu$,  where $w(t)=(w_j(t))_{j=1}^\nu=(c_{1,j}+ tw_j^0)_{j=1}^\nu$.  We
claim that $w_j^0=w_{j'}^0$ holds whenever $c_{1,j}=c_{1,j'}$, $1\le j,j'\le \nu$. Indeed,

{\em Case 1.} If $1\le j\le r$ then $1\le j'\le r$ and $\mu(j)=\mu(j')$, $q_j=q_{j'}$. Then
$$G_{w(t)}^{q_j-1}(w_j(t))-G_{w(t)}^{q_j-1}(w_{j'}(t))=R_j(w(t))-R_{j'}(w(t))=O(t^2)$$
which implies that $w_j(t)-w_{j'}(t)=O(t^2)$, i.e. $w_j^0=w_{j'}^0$.

{\em Case 2.} If $r<j\le \nu$ then $r<j'\le \nu$ and   $\hat l_j=\hat l_{j'}$, $\hat q_j=\hat q_{j'}$
where we define for any $r<j\le \nu$ the integers $\hat{l}_j<\hat{q}_j$ minimal  so that  $g^{\hat q_j}(c_{0,j})=g^{\hat l_j}(c_{0,j})$. By the chain rule it follows that
$$G_{w(t)}^{q_j-1}(w_j(t))-G_{w(t)}^{l_j-1}(w_j(t))=O(t^2)$$
implies
$$G_{w(t)}^{\hat{q}_j-1}(w_j(t))-G_{w(t)}^{\hat{l}_j-1}(w_j(t))=O(t^2)$$
under the assumption $Dg^{q_j-l_j}(c_{l_j,j})\not=1$.

Thus we obtain
$$G_{w(t)}^{\hat q_j-1}(w_j(t))-G_{w(t)}^{\hat q_{j}-1}(w_{j'}(t))=
G_{w(t)}^{\hat l_j-1}(w_j(t))-G_{w(t)}^{\hat l_{j}-1}(w_{j'}(t))+O(t^2),$$
which implies that
$$(Dg^{\hat q_j-1}(c_{l_j,j})-Dg^{\hat l_j-1}(c_{l_j,j}))(w_j(t)-w_{j'}(t))=O(t^2).$$
If such $j$ and $j'$  exist then $c_{l_j,j}$ is a hyperbolic periodic point,
hence $Dg^{\hat q_j-1}(c_{l_j,j})\not=Dg^{\hat l_j-1}(c_{l_j,j})$.
It follows that $w_{j}^0=w_{j'}^0$.

Thus the Claim is proved.
To obtain an eigenvector for $\mathcal{A}$ with eigenvalue $1$, define $v(c_{1,j})=w_j^0$,
$v(c_{0,j})=\frac{d}{dt} p_j(w(t))|_{t=0}$. For points $x\in g(P)\setminus P_0$, there is $j$ and $1\le s<q_j$ such that $x=g^s(c_{0,j})$, define $v(x)=\frac{d}{dt} G_{w(t)}^{s-1}(w_j(t))|_{t=0}$. Note that $v(x)$ does not depend on the choice of $j$ and $s$. (This can be proved similarly as the claim.)

The above argument builds an isomorphism between   $\{v\in\C^\nu: D\mathcal{R}(\textbf{c}_1, v)=0\}$
and the eigenspace of $\mathcal{A}$ associated with eigenvalue $1$. So these two spaces have the same dimension.
\end{proof}

\subsection{The spectrum of $\mathcal A$ and the determinant of some matrix $D(\rho)$}
\label{subsec:trans-sign}
Define $D(\rho)=(D_{j,k}(\rho))_{1\le j,k\le \nu}$ as follows: Put
$$L_{k}(z)=\frac{\partial G_{w}(z)}{\partial w_k}\left.\right|_{w=\textbf{c}_1};\quad
p_{j,k}=\frac{\partial p_j}{\partial w_k}(\textbf{c}_1);$$
$$\mathcal{L}^0_{j,k}=0 \text{ and } \mathcal{L}^m_{j,k}=\sum_{n=1}^m\frac{\rho^n L_k(c_{n,j})}{Dg^n(c_{1,j})} \text{ for } m>0;$$
$$D_{jk}(\rho)=\delta_{jk} +\mathcal{L}^{q_j-1}_{j,k}-\rho^{q_j} \frac{p_{\mu(j),k}}{Dg^{q_j-1}(c_{1,j})}$$
when $1\le j\le r$ and
$$D_{jk}(\rho)=\delta_{jk}+\mathcal{L}^{q_j-1}_{j,k}
-\frac{\rho^{q_j-l_j}}{Dg^{q_j-l_j}(c_{l_j,j})}\left(\mathcal{L}^{l_j-1}_{jk}+\delta_{j,k}\right)$$
when $r<j\le \nu$. Note that
\begin{equation} \det(D\mathcal{R}(\textbf{c}_1))=\prod_{j=1}^\nu Dg^{q_j-1}(c_{1,j})\det(D(1)).\label{eq:Drho} \end{equation}


We say that $\rho\in \C$ is an {\em exceptional value} if
there exists $r<j\le \nu$ such that $Dg^{q_j-l_j}(c_{l_j,j})=\rho^{q_j-l_j}$.

\begin{prop}\label{prop:non-exceptional}
For each non-exceptional $\rho\in \C$, we have
\begin{equation}
\det(I-\rho\mathcal{A})=0\Leftrightarrow \det (D(\rho))=0.\label{eq:non-exceptional}\end{equation}

\end{prop}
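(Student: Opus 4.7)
The plan is to prove (\ref{eq:non-exceptional}) by exhibiting an explicit linear bijection between $\ker(I-\rho\mathcal{A})$ and $\ker D(\rho)$. Throughout, write $v_j := v(c_{1,j})$ and $\textbf{v}^0 := (v_1,\ldots,v_\nu)$.

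First I would translate the eigenvalue equation $\rho\mathcal{A}\textbf{v} = \textbf{v}$ into concrete recursions. Differentiating the two defining relations of a lift at $\lambda=0$ gives $\hat v(c_{0,j}) = \sum_k p_{j,k}v_k$ whenever $c_{0,j}\in g(P)$, and $Dg(c_{n,j})\,\hat v(c_{n,j}) + \sum_k L_k(c_{n,j})v_k = v(c_{n+1,j})$ for $c_{n,j} \in g(P)\setminus P_0$. Imposing $\rho\hat v = v$ converts the second relation into the first-order recursion
\begin{equation*}
v(c_{n+1,j}) = \frac{Dg(c_{n,j})}{\rho}\,v(c_{n,j}) + \sum_k L_k(c_{n,j})\,v_k,\qquad 1\le n\le q_j-1,
\end{equation*}
with initial condition $v(c_{1,j}) = v_j$. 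Solving this telescopingly yields the closed form
\begin{equation*}
v(c_{n,j}) = \frac{Dg^{n-1}(c_{1,j})}{\rho^{n-1}}\Bigl(v_j + \sum_k v_k\,\mathcal{L}^{n-1}_{j,k}\Bigr),\qquad 1\le n\le q_j.
\end{equation*}

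Next, I would impose the closing conditions at $n=q_j$. For $1\le j\le r$, equating $v(c_{q_j,j})$ with $v(c_{0,\mu(j)}) = \rho\sum_k p_{\mu(j),k}v_k$, and for $r<j\le\nu$, equating $v(c_{q_j,j})$ with the closed-form value at $n=l_j$ (using $Dg^{q_j-1}(c_{1,j}) = Dg^{q_j-l_j}(c_{l_j,j})\,Dg^{l_j-1}(c_{1,j})$ to clear the leading factor), a direct rearrangement shows that each closing condition reduces to precisely the $j$-th row of the equation $D(\rho)\textbf{v}^0 = 0$. Hence the restriction map $\Pi : \textbf{v}\mapsto \textbf{v}^0$ sends $\ker(I-\rho\mathcal{A})$ into $\ker D(\rho)$, and it is injective: $\textbf{v}^0 = 0$ forces $v(c_{n,j})=0$ for every $n\ge 1$ via the closed form, and then $v(c_{0,j}) = \rho\sum_k p_{j,k}v_k = 0$ wherever $c_{0,j}\in g(P)$.

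Finally, I would prove surjectivity by reversing the construction. Given $\textbf{v}^0\in\ker D(\rho)$, define $v(c_{n,j})$ by the closed form and $v(c_{0,j}) = \rho\sum_k p_{j,k}v_k$ for $c_{0,j}\in g(P)$. The main obstacle is the consistency check: the candidate $v$ is a priori indexed by orbit labels, and must descend to a well-defined function on the set $g(P)$, i.e.\ different labels $(n,j), (n',j')$ representing the same point of $g(P)$ must receive the same value. All such coincidences arise from the critical relations (a), (b) and their forward iterates; within a single orbit segment these are already encoded in $D(\rho)\textbf{v}^0=0$, and the remaining case is when two or more orbit segments share a common periodic tail. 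This is precisely where the non-exceptionality hypothesis $Dg^{q_j - l_j}(c_{l_j,j}) \neq \rho^{q_j - l_j}$ is used: it guarantees that the periodic sub-operator of $I-\rho\mathcal{A}$ is invertible, so that the closed form has no additional periodic-orbit ambiguity and the values prescribed from different orbit segments must agree. With consistency established, the resulting $\textbf{v}$ lies in $\ker(I-\rho\mathcal{A})$ and $\Pi\textbf{v}=\textbf{v}^0$, completing the bijection and yielding (\ref{eq:non-exceptional}).
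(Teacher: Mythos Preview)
Your argument is correct and actually establishes slightly more (an isomorphism of kernels), but it takes a genuinely different route from the paper. The paper does not redo any linear algebra with the parameter $\rho$; instead it introduces an auxiliary deformation $(g^\rho,G^\rho,\textbf{p}^\rho)$, obtained by replacing $Dg$ by $Dg/\rho$ and $\partial\textbf{p}/\partial w$ by $\rho\,\partial\textbf{p}/\partial w$ at first order, whose transfer operator is exactly $\rho\mathcal{A}$. Non-exceptionality of $\rho$ is then precisely the hypothesis $D(g^\rho)^{q_j-l_j}(c_{l_j,j})\neq 1$ of Theorem~\ref{thm:specAtrans} for the new triple, so that theorem applies and gives $\det(I-\rho\mathcal{A})=0\Leftrightarrow D\mathcal{R}^\rho(\textbf{c}_1)$ is degenerate. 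A direct computation of $\partial R_j^\rho/\partial w_k$ shows this Jacobian equals $D(\rho)$ up to nonzero row scalings. The virtue of this trick is that all the combinatorial work (well-definedness of $v$ when different labels $(n,j)$ name the same point of $g(P)$) has already been done once in Theorem~\ref{thm:specAtrans} and need not be repeated.

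Your route, by contrast, re-runs the proof of Theorem~\ref{thm:specAtrans} with $\rho$ carried through explicitly. The recursion, closed form, and identification of the closing conditions with the rows of $D(\rho)$ are all correct. The one place that deserves tightening is the consistency step in surjectivity: your invocation of a ``periodic sub-operator'' is in the right spirit but not quite how the argument goes. What non-exceptionality actually buys is the analogue of the Claim in the proof of Theorem~\ref{thm:specAtrans}: if $c_{1,j}=c_{1,j'}$, one first uses $Dg^{q_j-l_j}(c_{l_j,j})\neq\rho^{q_j-l_j}$ to reduce each closing condition to the one for the minimal $\hat l,\hat q$, and then subtracts to force $v_j=v_{j'}$; well-definedness at later coincidences then follows because the closed form depends only on the starting point $c_{1,j}$ and on $v_j$. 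This is exactly the verification the paper defers with ``can be proved similarly as the claim''.
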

\begin{proof} For $\rho=0$, $\det(I)=\det(D(0))=1$. Assume $\rho\not=0$.
Define a new triple $(g^\rho, G^\rho, \textbf{p}^\rho)$ as follows.
\begin{itemize}
\item For each $x\in P\setminus P_0$, $G^\rho_{w}(z)=G_{w}(x)+\frac{Dg(x)}{\rho}(z-x)$ in a neighbourhood of $x$;
\item $g^\rho(x)=g(x)$ for each $x\in P_0$ and $g^\rho(z)= G_{\textbf{c}_1}^\rho(z)$ in a neighbourhood of $P\setminus P_0$;
\item $\textbf{p}^\rho(w)= \textbf{c}_0+ \rho \frac{\partial \textbf{p}}{\partial w}(\textbf{c}_1) \cdot (w-\textbf{c}_1).$
\end{itemize}
Let $\mathcal{A}^\rho$ be the transfer operator associated with the triple $(g^\rho, G^\rho, \textbf{p}^\rho)$. Then it is straightforward to check that
$$\mathcal{A}^\rho=\rho \mathcal{A}.$$
We can define a map $\mathcal{R}^\rho=(R^\rho_1, R^\rho_2,\cdots, R_\nu^\rho)$ for each $\rho\not=0$ in the obvious way:
$$R^\rho_j(w)=(G_{w}^\rho)^{q_j-1}(w_j)-p^\rho_{\mu(j)}(w)$$
for $1\le j\le r$ and
$$R^\rho_j(w)=(G_{w}^\rho)^{q_j-1}(w_j)-(G_{w}^\rho)^{l_j-1}(w_j)$$
for $r<j\le \nu$.
As long as $\rho$ is non-exceptional for the triple $(g,G, \textbf{p})$, the new triple $(g^\rho, G^\rho, \textbf{p}^\rho)$ satisfies the assumption of Theorem~\ref{thm:specAtrans}, thus
$$\det(I-\mathcal{A}^\rho)=0\Leftrightarrow D\mathcal{R}^\rho(\textbf{c}_1) \text{ is degenerate}.$$

Direct computation shows that the $(j,k)$-th entry of
$D\mathcal{R}^\rho (\textbf{c}_1)$ is equal to
$D_{j,k}(\rho)Dg^{q_j-1}(c_{1,j})/\rho^{q_j-1}.$ Indeed,
for each $1\le j\le r$,
\begin{align*}
D^\rho_{j,k}(\rho)&=\frac{\partial (G^\rho_{w})^{q_j-1}(c_{1,j})}{\partial w_k}\left.\right|_{w=\textbf{c}_1}+\frac{Dg^{q_j-1}(c_{1,j})}{\rho^{q_j-1}}\delta_{jk}- \rho \frac{\partial p_{\mu(j)}}{\partial w_k}\\
&=\frac{Dg^{q_j-1}(c_{1,j})}{\rho^{q_j-1}}\left(\delta_{jk}+\sum_{n=1}^{q_j-1} \frac{\rho^n L_k(c_{n,j})}{Dg^n(c_{1,j})}-\rho^{q_j}\frac{p_{\mu(j),k}}{Dg^{q_j-1}(c_{1,j})}\right),
\end{align*}
and for $r<j\le \nu$,
\begin{align*}
& D^\rho_{jk}(\rho)\\
=&
\frac{\partial ((G_{w}^\rho)^{q_j-1}(c_{1,j})-(G_{w}^\rho)^{l_j-1}(c_{1,j}))}{\partial w_k}\left.\right|_{w=\textbf{c}_1}+\delta_{jk}
\left(\frac{Dg^{q_j-1}(c_{1,j})}{\rho^{q_j-1}}-\frac{Dg^{l_j-1}(c_{1,j})}{\rho^{l_j-1}}\right)\\
=&\frac{Dg^{q_j-1}(c_{1,j})}{\rho^{q_j-1}}\mathcal{L}^{q_j-1}_{j,k}
-\frac{Dg^{l_j-1}(c_{1,j})}{\rho^{l_j-1}}\mathcal{L}^{l_j-1}_{j,k}+\delta_{jk}
\left(\frac{Dg^{q_j-1}(c_{1,j})}{\rho^{q_j-1}}-\frac{Dg^{l_j-1}(c_{1,j})}{\rho^{l_j-1}}\right)
\end{align*}
Therefore $\det(I-\rho \mathcal{A})=0$ if and only if $\det(D(\rho))=0$.
\end{proof}

\subsection{Positive transversality in the real case}

To illustrate the power of the previous proposition we state:

\begin{coro}[Positive transversality]
\label{real} Let $(g, G, \textbf{p})_W$ be a real local holomorphic deformation of a real marked map $g$. Assume that one has $|Dg^{q_j-l_j}(c_{l_j,j})|>1$ for all $r<j\le \nu$.
Assume furthermore that all the eigenvalues of $\mathcal{A}$ lie in the set $\{|\rho|\le 1, \rho\not=1\}$.
Then the `positively oriented' transversality condition holds.

\end{coro}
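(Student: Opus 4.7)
My plan is to reduce the claim to a one-variable continuity argument along the real segment $\rho\in[0,1]$. By equation~(\ref{eq:Drho}), the quantity $Q$ equals $\det D(1)$ exactly, so it is enough to prove $\det D(1) > 0$. Since the deformation $(g,G,\textbf{p})_W$ is real, every entry $D_{j,k}(\rho)$ is a polynomial in $\rho$ with real coefficients, and direct inspection of the definitions shows $D(0) = I$, so the function $\phi(\rho) := \det D(\rho)$ is continuous on $[0,1]$ with $\phi(0) = 1 > 0$. If I can show that $\phi$ does not vanish anywhere on $[0,1]$, then the intermediate value theorem yields $\phi(1) > 0$, which is the desired positivity.

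The main tool for ruling out zeros of $\phi$ is Proposition~\ref{prop:non-exceptional}, which at every non-exceptional $\rho$ gives the equivalence $\det D(\rho) = 0 \iff \det(I - \rho\mathcal{A}) = 0$. The first step is to verify that no $\rho\in[0,1]$ is exceptional: an exceptional value satisfies $\rho^{q_j-l_j} = Dg^{q_j-l_j}(c_{l_j,j})$ for some $r<j\le \nu$, and the hypothesis $|Dg^{q_j-l_j}(c_{l_j,j})| > 1$ then forces $|\rho|>1$. Hence the equivalence applies on the whole of $[0,1]$. Since $\rho=0$ trivially gives $\det(I)=1\ne 0$, a zero $\rho\in(0,1]$ of $\phi$ would have to correspond to a real eigenvalue $\mu = 1/\rho \ge 1$ of $\mathcal{A}$; but the spectral hypothesis $\mathrm{spec}(\mathcal{A})\subset \overline{\D}\setminus\{1\}$ forbids any such eigenvalue. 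Combined, $\phi$ has no zero on $[0,1]$, and the proof concludes.

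I do not anticipate a serious obstacle in this argument. The only point requiring any care is the handling of the exceptional set, since Proposition~\ref{prop:non-exceptional} only relates $\det D(\rho)$ with $\det(I-\rho\mathcal{A})$ away from those $\rho$; but the modulus hypothesis on the periodic multipliers $Dg^{q_j-l_j}(c_{l_j,j})$ pushes the entire exceptional set outside the closed unit disc, so the deformation path $[0,1]$ lies safely in the regime where the equivalence is valid. Everything else is standard continuity together with the eigenvalue bookkeeping described above.
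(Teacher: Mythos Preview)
Your proof is correct and follows essentially the same route as the paper: both reduce to showing $\det D(1)>0$ via (\ref{eq:Drho}), observe $\det D(0)=1$, use the hypothesis $|Dg^{q_j-l_j}(c_{l_j,j})|>1$ to ensure no $\rho\in[0,1]$ is exceptional, and then invoke Proposition~\ref{prop:non-exceptional} together with the spectral assumption to rule out zeros of $\det D(\rho)$ on $(0,1]$. The paper phrases the final step by factoring the real polynomial $\det D(\rho)=\prod_i(1-\rho\rho_i)$ and showing no $\rho_i\in[1,\infty)$, whereas you use the intermediate value theorem directly; these are equivalent formulations of the same continuity argument.
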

\begin{proof} Write the polynomial $\det (D(\rho))$ in the form $\prod_{i=1}^N (1-\rho \rho_i)$, where $\rho_i\in \C\setminus \{0\}$.
Because of (\ref{eq:Drho}) it suffices to show that
$\det( D(1))>0$. Since $\det(D(\rho))$ is a real polynomial in $\rho$, this follows from
$\rho_i\not\in [1,\infty)$ for each $i$. Arguing by contradiction, assume that $\rho_i\ge 1$ for some $i$. Then $1/\rho_i$ is a zero of $\det(D(\rho))$. As $|1/\rho_i|
\le 1$, $1/\rho_i$ is not an exceptional value. Thus
$\det(I-\mathcal{A}/\rho_i)=0$, which implies that $\rho_i\ge 1$ is an eigenvalue of $\mathcal{A}$, a contradiction!
\end{proof}

\subsection{A remark on an alternative transfer operator}
Proposition~\ref{prop:non-exceptional} shows that for non-exceptional $\rho$,
one has (\ref{eq:non-exceptional}). One can also associate to $(g,G, \textbf{p})$ another linear operator $\mathcal{A}_J$ for which
\begin{equation}\label{I}
\det D(\rho)=\det (I-\rho \mathcal{A}_J)
\end{equation}
holds for {\em all} $\rho\in \C$.
Here $J$ denotes a collection of all pairs $(i,j)$ such that $1\le j\le \nu$,  $0\le i\le q_j-1$ and if $i=0$ then $j=\mu(j')$ for some $1\le j'\le \nu$. Given a collection of functions $\{c_{i,j}(\lambda)\}_{(i,j)\in J}$ which are holomorphic in a small neighbourhood of $\lambda=0$, there is another collection of holomorphic near $0$ functions $\{\hat c_{i,j}(\lambda)\}_{(i,j)\in J}$ such that
$\hat c_{0,j}(\lambda)=p_j(\textbf{c}_1(\lambda))$ where $\textbf{c}_1(\lambda)=(c_{1,1}(\lambda),\cdots, c_{1,\nu}(\lambda))$
and, for $i\not=0$, $G(\text{c}_1(\lambda), \hat c_{i,j})=c_{i+1,j}(\lambda)$. Here we set
$c_{q_j,j}(\lambda)=c_{0,\mu(j)}(\lambda)$ for $1\le j\le r$ and $c_{q_j,j}(\lambda)=c_{l_j,j}(\lambda)$ for $r<j\le \nu$.
Define the linear map
$\mathcal{A}_J: \C^{\#J}\to \C^{\#J}$ by taking the derivative at $\lambda=0$: $\mathcal{A}_J(\{c_{i,j}'(0)\}_{(i,j)\in J})=\{\hat c_{i,j}'(0)\}_{(i,j)\in J}$.
Explicitly, we get:
$$\hat c'_{i,j}(0)=\left\{
\begin{array}{ll}
\sum_{k=1}^\nu p_{j,k} &\mbox{ if } i=0 \mbox{ and } j=\mu(j') \mbox{ for some } j'\\
\frac{1}{Dg(c_{i,j})} \left(v_{i+1,j}-\sum_{k=1}^\nu L_k(c_{i,j})v_{1,k}\right) & \mbox{ if } 1\le i<q_j-1, 1\le j\le \nu\\
\frac{1}{Dg(c_{q_j-1,j})} \left(v_{0,\mu(j)}-\sum_{k=1}^\nu L_k(c_{q_j-1,j})v_{1,k}\right) &\mbox{ if } i=q_j-1, 1\le j\le r\\
\frac{1}{Dg(c_{q_j-1,j})} \left(v_{l_j,j}-\sum_{k=1}^\nu L_k(c_{q_j-1,j})v_{1,k}\right) & \mbox{ if } i=q_j-1, r< j\le \nu
\end{array}
\right.
$$
Elementary properties of determinants being applied to the matrix $I-\rho \mathcal{A}_J$ lead to~(\ref{I}).
Observe that $\mathcal{A}_J=\mathcal{A}$ if (and only if) all points $c_{i,j}$, $(i,j)\in J$ are pairwise different.
Therefore, we have:
$$\det (I-\rho \mathcal{A})=\det D(\rho)$$
for every $\rho\in \C$
provided $\sum_{j=1}^\nu (q_j-1)+r=\#P$.

\section{The lifting property and the spectrum of $\mathcal A$}\label{sec:lifting}

The next proposition shows that the lifting property implies that the spectrum of $\mathcal{A}$ is in the closed unit disc.
\begin{prop}
 \label{prop:lift2spectrum}
If $(g,G,\textbf{p})_W$ has the lifting property, then
the spectral radius of the associated transfer operator $\mathcal{A}$ is at most $1$ and every eigenvalue of $\mathcal{A}$ of modulus one is semisimple (i.e. its algebraic multiplicity coincides with its geometric multiplicity). Moreover, for $(g, G, \textbf{p})_W$ real, we only need to assume that the lifting property with respect to real holomorphic motions.
\end{prop}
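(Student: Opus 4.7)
The plan is to reduce everything to the classical finite-dimensional fact that a linear operator has spectral radius at most $1$ with every unit-modulus eigenvalue semisimple if and only if its iterates are uniformly bounded. Thus I aim to show that the lifting property forces $\{\mathcal{A}^k\}_{k\ge 0}$ to be uniformly bounded on the finite-dimensional space $\C^{\# g(P)}$.

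First I would realise any $\mathbf{v} = (v(x))_{x\in g(P)}\in\C^{\# g(P)}$ as the velocity at $\lambda=0$ of an affine holomorphic motion $h_\lambda^{(0)}(x) := x + \delta\lambda v(x)$, where $\delta>0$ is chosen small enough that $x\mapsto h_\lambda^{(0)}(x)$ is injective on $g(P)$ for every $\lambda\in\D$; since $g(P)$ is finite, any $\delta$ strictly smaller than $\min\{|x-y|/|v(x)-v(y)|:x,y\in g(P),\,v(x)\ne v(y)\}$ works (and any $\delta>0$ works if this set is empty). Applying the lifting property to $h^{(0)}_\lambda$ then produces $\eps>0$, a constant $M>0$, and a sequence $(h^{(k)}_\lambda)_{k\ge 0}$ of holomorphic motions of $g(P)$ over $(\D_\eps,0)$ such that $h^{(k+1)}$ is a lift of $h^{(k)}$ and $|h^{(k)}_\lambda(x)|\le M$ for all $k,x,\lambda$.

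Next I would verify, by induction on $k$ and directly from the definition of $\mathcal{A}$ in Section~\ref{subsec:21}, that
\[
\frac{d}{d\lambda}h^{(k)}_\lambda(x)\Big|_{\lambda=0} \;=\; \mathcal{A}^k(\delta \mathbf{v})(x) \qquad \text{for every } x\in g(P).
\]
Cauchy's estimate on the disc $\D_\eps$ then yields $\|\mathcal{A}^k\mathbf{v}\|_\infty \le M/(\delta\,\eps)$ for every $k\ge 0$. Since this bound holds for every $\mathbf{v}$, the iterates $\mathcal{A}^k$ are uniformly bounded in operator norm on the finite-dimensional space $\C^{\# g(P)}$; Gelfand's formula gives $\rho(\mathcal{A})\le 1$, while a non-trivial Jordan block at a unit-modulus eigenvalue would force $\|\mathcal{A}^k\mathbf{v}\|$ to grow polynomially for some $\mathbf{v}$, contradicting uniform boundedness, so every unit-modulus eigenvalue is semisimple.

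For the real case I would observe that when $\mathbf{v}\in\R^{\# g(P)}$ the motion $h^{(0)}$ is real-symmetric, so only the real lifting property is invoked; the bound on $\mathcal{A}$ as a complex operator then follows by splitting an arbitrary $\mathbf{v}\in\C^{\# g(P)}$ into real and imaginary parts. The one delicate point I anticipate is the inductive identification $\tfrac{d}{d\lambda}h^{(k+1)}_\lambda|_{\lambda=0} = \mathcal{A}\cdot\tfrac{d}{d\lambda}h^{(k)}_\lambda|_{\lambda=0}$: one has to pay attention to the fact that in the definition of a lift the base parameter $\textbf{c}_1(\lambda)$ of $G_w$ is determined by $h^{(k)}$ rather than by $h^{(k+1)}$, so that Leibniz applied to $G_{\textbf{c}_1(\lambda)}(h^{(k+1)}_\lambda(x)) = h^{(k)}_\lambda(g(x))$ returns exactly the operator $\mathcal{A}$ and nothing more. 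This is however a direct unpacking of definitions, so I expect no genuine obstruction beyond some bookkeeping.
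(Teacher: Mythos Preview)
Your proposal is correct and follows essentially the same approach as the paper: realise an arbitrary vector as the initial velocity of a holomorphic motion, use the lifting property plus Cauchy's estimate to bound the velocities of the iterated lifts, and deduce that $\{\mathcal{A}^k\mathbf{v}\}_k$ is bounded for every $\mathbf{v}$, which in finite dimensions forces $\rho(\mathcal{A})\le 1$ with all unit-modulus eigenvalues semisimple. The only cosmetic slip is the phrase ``uniformly bounded in operator norm'': your bound $M/(\delta\eps)$ depends on $\mathbf{v}$, so you only directly obtain orbitwise boundedness, but in finite dimensions this is equivalent (e.g.\ apply the argument to a basis), and the paper's proof is content with the same orbitwise statement.
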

\begin{proof}
For any $\textbf{v}=(v(x))_{x\in g(P)}$, construct a holomorphic motion $h_\lambda^{(0)}$ over $(\Lambda,0)$ for some domain $\Lambda\ni 0$, such that
$\frac{d}{d\lambda}h^{(0)}_\lambda(x)\left|\right._{\lambda=0}=v(x)$ for all $x\in g(P)$. Then
$$\mathcal{A}^k(\textbf{v})=\left(\frac{d}{d\lambda}h^{(k)}_\lambda(x)\left|\right._{\lambda=0}\right)_{x\in g(P)}$$ for every $k>0$.
By Cauchy's integral formula, there exists $C=C(M, \eps)$ such that $|\frac{d}{d\lambda}h^{(k)}_\lambda(x)\left|\right._{\lambda=0}|\le C$ holds for all $x\in g(P)$ and all $k$. It follows that for any $\textbf{v}\in \C^{\#g(P)}$, $\mathcal{A}^k(\textbf{v})$ is a bounded sequence. Thus the
spectral radius of $\mathcal{A}$ is at most one and every eigenvalue of $\mathcal{A}$ of modulus one is semisimple.

Suppose $(g, G, \textbf{p})_W$ is real. Then for any $\textbf{v}\in \R^{\#g(P)}$, the holomorphic motion $h_\lambda^{(0)}$ can be chosen to be real. Thus if $(g, G,\textbf{p})_W$ has the real  lifting property, then $\{\mathcal{A}^k(\textbf{v})\}_{k=0}^\infty$ is bounded for each $\textbf{v}\in\R^{\#g(P)}$. The conclusion follows.
\end{proof}

To obtain that the radius is strictly smaller than one, we shall apply the argument to a suitable perturbation of the map $g$.
For example, we have the following:

\begin{prop}[Robust spectral property] \label{prop:perturb2spectrum}
Let $(g,G,\textbf{p})_W$ be as above.
Let $Q$ be a polynomial such that
$Q(c_{0,j})=0$ for $1\le j\le \nu$ and $Q(x)=0$, $Q'(x)=1$ for every $x\in g(P)$. Let $\varphi_\xi(z)=z-\xi Q(z)$ and for $\xi\in (0,1)$ let
$\psi_{\xi}(w)=(\varphi_\xi^{-1}(w_1),\cdots,\varphi_\xi^{-1}(w_\nu))$
be a map from a neighbourhood of $\textbf{c}_1$ into a neighbourhood of $\textbf{c}_1$. Suppose that there exists $\xi\in (0,1)$ such that the triple
$(\varphi_\xi\circ g, \varphi_\xi\circ G, \textbf{p}\circ \psi_\xi)$ has the lifting property. Then
the spectral radius of $\mathcal{A}$ is at most $1-\xi$.
\end{prop}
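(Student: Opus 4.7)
The plan is to reduce the claim, via Proposition~\ref{prop:lift2spectrum} applied to the perturbed triple, to an algebraic relation between the two transfer operators that encodes the $(1-\xi)$-contraction built into~$\varphi_\xi$.

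First, I would verify that $(\tilde g,\tilde G,\tilde{\textbf{p}}):=(\varphi_\xi\circ g,\varphi_\xi\circ G,\textbf{p}\circ\psi_\xi)$ is a local holomorphic deformation of the marked map $\tilde g$ with the same marked set~$P$, the same $\textbf{c}_0,\textbf{c}_1$, and the same critical relations as~$g$. The key point is that $Q$ vanishes on $g(P)$, so $\varphi_\xi$ fixes each point of~$g(P)$; therefore $\tilde g=g$ on~$P$, $\tilde{\textbf{p}}(\textbf{c}_1)=\textbf{p}(\psi_\xi(\textbf{c}_1))=\textbf{p}(\textbf{c}_1)=\textbf{c}_0$, and $\tilde G_{\textbf{c}_1}=\tilde g|_U$. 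Since by hypothesis the perturbed triple has the lifting property, Proposition~\ref{prop:lift2spectrum} applied to it gives that the associated transfer operator~$\tilde{\mathcal A}$ has spectral radius at most~$1$, with every unit-modulus eigenvalue semisimple.

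The next step is to relate $\tilde{\mathcal A}$ and $\mathcal A$ explicitly. Differentiating the perturbed-lift equations at $\lambda=0$ and using $\varphi_\xi'(x)=1-\xi$ for every $x\in g(P)$ (because $Q'(x)=1$ there) together with $(\psi_\xi)'(\textbf{c}_1)=(1-\xi)^{-1}\,\mathrm{Id}$, a direct computation (analogous to Example~\ref{example1}) yields the relation $\tilde{\mathcal A}=(1-\xi)^{-1}\mathcal S-\mathcal L$, where $\mathcal A=\mathcal S-\mathcal L$ is the decomposition of~$\mathcal A$ into its ``shift'' part~$\mathcal S$ (the terms $v(g(x))/g'(x)$ for $x\in g(P)\setminus P_0$ together with $\sum_k p_{j,k}v(c_{1,k})$ at the marked points) and its ``$L$-part''~$\mathcal L$ (the terms $\sum_k L_k(x)v(c_{1,k})/g'(x)$ for $x\in g(P)\setminus P_0$, vanishing on marked points). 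Equivalently, $\mathcal A=(1-\xi)\tilde{\mathcal A}-\xi\mathcal L$.

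The hard part, which I expect to be the main obstacle, is deducing $\mathrm{spec}(\mathcal A)\subset\overline{\mathbb D}_{1-\xi}$ from $\mathrm{spec}(\tilde{\mathcal A})\subset\overline{\mathbb D}$ together with the above identity, in spite of the rank-$\nu$ correction~$\xi\mathcal L$. The natural route is through the determinant formalism of~\S\ref{subsec:trans-sign}: comparing the two characteristic polynomials via the matrix~$D(\rho)$, using Proposition~\ref{prop:non-exceptional} on the non-exceptional part of the spectrum, one should be able to show that any eigenvalue of~$\mathcal A$ of modulus strictly greater than~$1-\xi$ would correspond, under a $\xi$-dependent rescaling of the spectral parameter, to an eigenvalue of~$\tilde{\mathcal A}$ of modulus strictly greater than~$1$, contradicting the previous step. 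Exceptional values can then be recovered by perturbing~$\xi$ slightly and invoking continuity in~$\xi$.
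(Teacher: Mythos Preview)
The paper's argument is a one-liner: from the derivative identities $D\tilde g(x)=(1-\xi)Dg(x)$, $\tilde L_k(x)=(1-\xi)L_k(x)$ and $\tilde p_{j,k}=(1-\xi)^{-1}p_{j,k}$ it asserts that the transfer operator of the perturbed triple equals $(1-\xi)^{-1}\mathcal A$, and then applies Proposition~\ref{prop:lift2spectrum}. Your computation is more careful and reaches a different conclusion, namely $\tilde{\mathcal A}=(1-\xi)^{-1}\mathcal S-\mathcal L$. Your formula is the one that follows from the statement as written: in the non-marked part of the operator the relevant quantity is $\tilde L_k(x)/D\tilde g(x)$, and since both numerator and denominator pick up the same factor $(1-\xi)$, this ratio equals $L_k(x)/Dg(x)$ and is \emph{not} rescaled. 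Only the shift term $v(g(x))/Dg(x)$ and the $\textbf p$-term acquire the factor $(1-\xi)^{-1}$.

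That said, the ``hard part'' you isolate cannot be closed by the determinant formalism you propose, and in fact the proposition as literally stated does not hold. Take $\nu=1$, $q=2$, $\textbf p\equiv 0$ as in Example~\ref{example1}: then $\mathcal A$ and $\tilde{\mathcal A}$ are both lower-triangular with eigenvalues $\{0,-L_1/D_1\}$, so $\mathrm{spec}(\mathcal A)=\mathrm{spec}(\tilde{\mathcal A})$ and no $(1-\xi)$-contraction can be extracted from the lifting property of the perturbed triple alone. The likely resolution is a typo in the statement: if one also reparametrises the $w$-variable in $G$, i.e.\ takes $\tilde G_w=\varphi_\xi\circ G_{\psi_\xi(w)}$ rather than $\varphi_\xi\circ G_w$, then $\tilde L_k(x)=\varphi_\xi'(g(x))\cdot L_k(x)\cdot(1-\xi)^{-1}=L_k(x)$, so $\tilde L_k/D\tilde g=(1-\xi)^{-1}L_k/Dg$, and now genuinely $\tilde{\mathcal A}=(1-\xi)^{-1}\mathcal A$. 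With that correction the paper's short argument goes through and your detour becomes unnecessary; without it, neither argument can reach the stated conclusion.
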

\begin{proof}
Note that $\tilde{g}:=\varphi_\xi\circ g$ is a marked map with the same sets $P_0\subset P$. Furthermore,
$\tilde{g}^i(c_{0,j})=g^i(c_{0,j})=c_{i,j}$, $D\widetilde{g}(c_{i,j})=(1-\xi) Dg(c_{i,j})$,
$\frac{\partial \varphi_\xi\circ G}{\partial w_k}({\bf c}_1,z)=(1-\xi)\frac{\partial G}{\partial w_k}({\bf c}_1,z)$ for each $z\in g(P)\setminus P_0$,
and $\frac{\textbf{p}\circ \psi_\xi}{\partial w_k}({\bf c}_1)=(1-\xi)^{-1}\frac{\partial \textbf{p}}{\partial w_k}({\bf c}_1)$.
Therefore, the operator which is associated to the triple $(\varphi_\xi\circ g, \varphi_\xi\circ G, \textbf{p}\circ \psi_\xi)$ is equal to
$(1-\xi)^{-1}\mathcal{A}$,
Since the latter triple has the lifting property, by Proposition~\ref{prop:lift2spectrum},
the spectral radius of $(1-\xi)^{-1}\mathcal{A}$ is at most $1$.
\end{proof}

For completeness we include:

\begin{lemma} Assume that the spectrum radius of $\mathcal{A}$ is strictly less than $1$. Then the lifting property holds.
\end{lemma}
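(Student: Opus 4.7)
\emph{Proof plan.} The strategy is to regard the lift as a nonlinear operation on vectors $u=(h_\lambda(x)-x)_{x\in g(P)}\in\C^{\#g(P)}$ whose derivative at $u=0$ is, by the very definition of $\mathcal{A}$ in Section~\ref{subsec:21}, the transfer operator itself. The hypothesis $\rho(\mathcal{A})<1$ then forces the lift to be a contraction in a neighbourhood of the identity motion, and everything follows by induction.

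First I would pick $r\in(\rho(\mathcal{A}),1)$ and a norm $\|\cdot\|$ on $\C^{\#g(P)}$ for which the operator norm satisfies $\|\mathcal{A}\|\le r$; such a norm exists by a standard Gelfand-type construction. Second, viewing the lift equations pointwise in $\lambda$, I would define a map $\Phi\colon u\mapsto\hat u$ by
\[\hat u_{c_{0,j}}=p_j(\mathbf{c}_1+(u_{c_{1,i}})_i)-c_{0,j},\]
while for $x\in g(P)\setminus P_0$, $\hat u_x$ is the unique small solution of $G_{\mathbf{c}_1+(u_{c_{1,i}})_i}(x+\hat u_x)=g(x)+u_{g(x)}$. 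Since $\partial_z G_{\mathbf{c}_1}(x)=g'(x)\ne 0$ on $P\setminus P_0$, the implicit function theorem yields $\hat u_x$ holomorphically in its arguments. By construction $\Phi(0)=0$ and $D\Phi(0)=\mathcal{A}$, so there exist $\delta_0>0$ and $C>0$ with
\[\|\Phi(u)\|\le r\|u\|+C\|u\|^2\qquad\text{whenever }\|u\|\le\delta_0.\]

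Set $\delta:=\min\{\delta_0,(1-r)/(2C)\}$, shrunk further (if necessary) so that whenever $\|u\|\le\delta$ the lift is not only defined but sends distinct points of $g(P)$ to distinct images. Given a holomorphic motion $h^{(0)}$ over $(\D,0)$, continuity of $\lambda\mapsto h^{(0)}_\lambda(x)$ at $0$ on the \emph{finite} set $g(P)$ furnishes an $\eps>0$ with $\|h^{(0)}_\lambda-\mathrm{id}\|\le\delta$ for all $\lambda\in\D_\eps$. Inductively, if $\|h^{(k)}_\lambda-\mathrm{id}\|\le\delta$ on $\D_\eps$ then the lift $h^{(k+1)}_\lambda$ exists on $\D_\eps$, and applying $\Phi$ pointwise in $\lambda$ gives
\[\|h^{(k+1)}_\lambda-\mathrm{id}\|\le(r+C\delta)\,\|h^{(k)}_\lambda-\mathrm{id}\|\le\tfrac{1+r}{2}\,\delta\le\delta.\]
Holomorphy of $h^{(k+1)}_\lambda$ in $\lambda$ is inherited from the holomorphic parameter dependence in $\Phi$, and injectivity on $g(P)$ was built into the choice of $\delta$. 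The sequence of lifts thus stays in the disc of radius $\delta$ about the identity, yielding the uniform bound $|h^{(k)}_\lambda(x)|\le\max_{y\in g(P)}|y|+\delta=:M$.

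The main technical point to justify carefully is the identification $D\Phi(0)=\mathcal{A}$, but this is immediate from the definition of $\mathcal{A}$ in \S\ref{subsec:21}. The real case is treated identically: uniqueness of the small solution produced by the implicit function theorem forces $\Phi$ to commute with complex conjugation when the deformation is real, so real-symmetric initial motions produce real-symmetric lifts and the same induction applies.
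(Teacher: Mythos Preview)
Your proof is correct and follows essentially the same approach as the paper: both recognise the lift as a holomorphic map $\Phi$ on $\C^{\#g(P)}$ fixing the identity motion (equivalently, the point $\mathbf{z}=g(P)$) with derivative $\mathcal{A}$ there, and use $\rho(\mathcal{A})<1$ to conclude this fixed point is attracting. The only cosmetic difference is that you work in the translated coordinates $u=h-\mathrm{id}$ and pick an adapted norm to get one-step contraction, whereas the paper works with $\Phi$ directly and invokes that some iterate $\Phi^N$ maps a neighbourhood compactly into itself.
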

\begin{proof} 
Let $\Phi(Z)=(\varphi_x(Z))_{x\in g(P)}$ be the holomorphic map defined from a neighbourhood $V$ of the point
${\bf z}:=g(P)\in \C^{\#g(P)}$ by
$$G_{Z_1}(\varphi_{x}(Z))=z_{g(x)}, \ \ \ x\in g(P)\setminus P_0, $$
$$\varphi_{c_{0,j}}(Z)=p_j(Z_1), \ \ \ 1\le j\le \nu ,$$
where ${\bf z}_1=(z_{c_{1,j}})_{j=1}^\nu$, $Z=(z_x)_{x\in g(P)}$.
Then $\Phi({\bf z})={\bf z}$.  Moreover, if $h_\lambda$ is a holomorphic motion of $g(P)$ over $(\Lambda,0)$ with $\textbf{h}_\lambda=(h_\lambda(x))_{x\in g(P)}\in V$ for all $\lambda$, then $\widehat{h}_\lambda(x):=(\Phi(\textbf{h}_\lambda))_x$ is the lift of $h_\lambda$ over $(\Lambda,0)$.

So the derivative of $\Phi$ at ${\bf z}$ is equal to $\mathcal{A}$, and hence ${\bf z}$ is a hyperbolic attracting fixed point of $\Phi$. Therefore, there exist $N>0$ and a neighborhood $\mathcal{U}$ of $\textbf{z}$ such that $\Phi^N$ is well-defined on $\mathcal{U}$
and such that $\Phi^N(\mathcal{U})$ is compactly contained in $\mathcal{U}_0$. It follow $\Phi^n$ converges uniformly to the constant ${\bf z}$ in $\mathcal{U}$.

Let us prove that $(g,G,\textbf{p})_W$ has the lifting property. Indeed, if $h_\lambda$ is a holomorphic motion of $g(P)$ over $(\D,0)$, then there exists $\eps>0$ such that $\textbf{h}_\lambda:=(h_\lambda(x))_{x\in g(P)}\in \mathcal{U}$, so that $\textbf{h}^{(k)}_\lambda:=\Phi^k(\textbf{h}_\lambda)$ is well-defined. Let $h_\lambda^{(k)}(x)$, $x\in g(P)$, be such that
$\textbf{h}^{(k)}_\lambda=(h_\lambda^{(k)}(x))_{x\in g(P)}$. Then for each $k\ge 0$, $h_\lambda^{(k)}$ is a holomorphic motion of $g(P)$ over $(\D_\eps,0)$ and $h_\lambda^{(k+1)}$ is the lift of $h_\lambda^{(k)}$.
%
\end{proof}

\section{The lifting property and persistence of critical relations}
\label{sec:mainthm}
The main technical result in this paper is the following theorem:
\begin{theorem}\label{thm:1eigen}
Assume that either the triple $(g, G, \textbf{p})_W$ has the lifting property or $(g, G,\textbf{p})_W$ is real and has the real lifting property. Assume also that for all $r<j\le \nu$, $Dg^{q_j-l_j}(c_{l_j, j})\not=1$.
Then
\begin{enumerate}
\item All eigenvalues of $\mathcal{A}$ are contained in $\overline{\D}$.
\item There is a neighborhood $W'$ of $\textbf{c}_1$ in $W$ such that
\begin{equation}\label{allornoth}
\{\textbf{w}\in W'  \, | \,  \mathcal{R}(\textbf{w})=0\}
\end{equation}
is a smooth submanifold of $W'$, and its dimension  is equal to the geometric multiplicity of the eigenvalue $1$ of $\mathcal{A}$.
\end{enumerate}
\end{theorem}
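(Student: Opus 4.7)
Part~(1) is immediate from Proposition~\ref{prop:lift2spectrum}: under either the full or real lifting property, that proposition already gives $\mathrm{spec}(\mathcal{A})\subset\overline{\D}$ with every unit-modulus eigenvalue semisimple. Let $d$ denote the geometric multiplicity of $1$ as an eigenvalue of $\mathcal{A}$. The parabolic-freeness hypothesis lets one invoke Theorem~\ref{thm:specAtrans}, giving $\dim\ker D\mathcal{R}(\textbf{c}_1)=d$, and the rank theorem applied to $\mathcal{R}$ then yields a smooth complex $d$-dimensional submanifold $M\subset W$ through $\textbf{c}_1$ with $T_{\textbf{c}_1}M=\ker D\mathcal{R}(\textbf{c}_1)$ and $\mathcal{R}^{-1}(0)\subset M$ locally. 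What remains is to exhibit a $d$-parameter family of actual zeros of $\mathcal{R}$ filling out $M$; this is where the lifting property enters in full strength.

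The plan is to apply a Banach implicit function theorem to the lifting operator itself. Fix a small $\varepsilon>0$ and let $\mathcal{B}=H^\infty(\D_\varepsilon;\C^{\#g(P)})$, with $H^\ast(\lambda)\equiv\textbf{z}:=(x)_{x\in g(P)}$ the constant motion. For $H$ in a Banach neighbourhood $\mathcal{U}$ of $H^\ast$, the non-vanishing of $g'$ on $P\setminus P_0$ allows one to define $L(H)$ pointwise in $\lambda$ by the formulas of Subsection~\ref{subsec:liftproperty}, and $L\colon\mathcal{U}\to\mathcal{B}$ is a holomorphic Banach map fixing $H^\ast$. A parameterised version of the derivative computation in Example~\ref{example1} identifies the Fr\'echet derivative $DL(H^\ast)$ with the pointwise action of $\mathcal{A}$: $(DL(H^\ast)\cdot\delta H)(\lambda)=\mathcal{A}\bigl(\delta H(\lambda)\bigr)$. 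Since $1$ is semisimple, the decomposition $\C^{\#g(P)}=E\oplus F$ (with $E=\ker(\mathcal{A}-I)$ and $(\mathcal{A}-I)|_F$ invertible) lifts to a continuous splitting $\mathcal{B}=\mathcal{B}_E\oplus\mathcal{B}_F$ on which $DL(H^\ast)-\mathrm{Id}$ has kernel $\mathcal{B}_E$ and is a Banach isomorphism of $\mathcal{B}_F$; the main technical point here is to confirm that $L$ really extends from individual orbits (which is all the lifting property gives directly) to an honest holomorphic operator on a Banach-open neighbourhood of $H^\ast$, and that should follow from standard quantitative estimates on the inverse branches of $G_w$ uniform in $w$ near $\textbf{c}_1$.

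By the Banach rank theorem applied to $P_F\circ(L-\mathrm{Id})\colon\mathcal{U}\to\mathcal{B}_F$, the fixed-point set $\mathrm{Fix}(L)$ near $H^\ast$ is a Banach submanifold parameterised holomorphically by a neighbourhood of $0$ in $\mathcal{B}_E$. For each small $v\in E$, the section $\beta_E^v(\lambda):=\lambda v$ produces a fixed motion $H^v$ with $H^v(\lambda)=\textbf{z}+\lambda v+O(|v|^2)$ uniformly in $\lambda\in\D_\varepsilon$. Picking any $\lambda_0\in\D_\varepsilon\setminus\{0\}$ and letting $\pi\colon\C^{\#g(P)}\to\C^\nu$ denote the projection onto the $(c_{1,j})_{j=1}^\nu$-coordinates, the holomorphic map $\Psi(v):=\pi(H^v(\lambda_0))$ satisfies $\Psi(0)=\textbf{c}_1$, $D\Psi(0)=\lambda_0\,\pi|_E$, and $\Psi(v)\in\mathcal{R}^{-1}(0)$ by the fixed-point interpretation of iterated lifts given in Section~\ref{sec:lifting}. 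Since $\pi|_E\colon E\to\ker D\mathcal{R}(\textbf{c}_1)$ is exactly (up to the nonzero scalar $\lambda_0$) the isomorphism constructed inside the proof of Theorem~\ref{thm:specAtrans}, $\Psi$ is a holomorphic immersion at $0$ whose image is a smooth $d$-dimensional submanifold contained in $M$; a dimension count forces it to equal $M$ near $\textbf{c}_1$, giving $\mathcal{R}^{-1}(0)=M$ locally. In the real case the whole construction runs inside the closed Banach subspace of conjugation-symmetric motions, on which $\mathcal{A}$ and the splitting $E\oplus F$ respect the real structure, so the same conclusion holds.
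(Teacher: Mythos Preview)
Your Part~(1) is fine and matches the paper. The serious problem is in Part~(2), at the sentence ``By the Banach rank theorem applied to $P_F\circ(L-\mathrm{Id})\colon\mathcal{U}\to\mathcal{B}_F$, the fixed-point set $\mathrm{Fix}(L)$ near $H^\ast$ is a Banach submanifold parameterised holomorphically by a neighbourhood of $0$ in $\mathcal{B}_E$.'' This does not follow. The submersion theorem applied to $P_F\circ(L-\mathrm{Id})$ tells you only that $\{H:P_F(L(H)-H)=0\}$ is a $\mathcal{B}_E$-manifold; the fixed-point set is the subvariety where the remaining equation $P_E(L(H)-H)=0$ also holds, and nothing you have written controls that $E$-component. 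In Lyapunov--Schmidt language, you have solved the auxiliary equation but ignored the bifurcation equation. Concretely, in finite dimensions the map $\Phi(x,y,z)=(x+y^2,\,y,\,z/2)$ has $D\Phi(0)=\mathrm{diag}(1,1,\tfrac12)$, so $E$ is two-dimensional and $1$ is semisimple, yet $\mathrm{Fix}(\Phi)=\{y=z=0\}$ is one-dimensional. (That particular $\Phi$ fails the lifting property because $\Phi^n(x,y,z)=(x+ny^2,y,z/2^n)$ blows up, which is exactly the point: semisimplicity of $1$ is strictly weaker than the lifting property, and your argument only uses the former.)

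The paper's proof does not try to produce exact fixed points of $L$. Instead it exploits the full strength of the lifting hypothesis---uniform boundedness of the entire sequence of lifts---through a Ces\`aro averaging trick (Lemma~\ref{lem:arbitraryasyminv}): given a motion asymptotically invariant of order $m$, the averaged lifts $\frac{1}{k}\sum_{i<k}h^{(i)}_\lambda$ subconverge to a motion asymptotically invariant of order $m+1$. Iterating, for every $v\in\ker D\mathcal{R}(\mathbf{c}_1)$ and every $m$ one gets a curve $w(\lambda)$ tangent to $v$ with $\mathcal{R}(w(\lambda))=O(\lambda^{m+1})$. A Taylor-expansion argument (split into the cases $d=\nu$ and $0<d<\nu$ via the implicit function theorem on the nondegenerate block of $D\mathcal{R}$) then forces $\mathcal{R}$ to vanish identically on the candidate $d$-dimensional manifold. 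Your approach would become correct if you could show the bifurcation equation $P_E(L(\beta+\gamma(\beta))-(\beta+\gamma(\beta)))=0$ holds automatically, but that is essentially equivalent to what the paper proves, and seems to require the averaging/asymptotic-order machinery rather than a one-shot implicit function theorem.
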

The second statement is useful to conclude that $D\mathcal{R}$ is non-degenerate at $\textbf{c}_1$, or equivalently, that $1$ is not an eigenvalue of $\mathcal{A}$. Indeed,
if $\nu=1$ and if $1$ is an eigenvalue of $\mathcal{A}$, the manifold (\ref{allornoth}) must contain a neighbourhood of ${\bf c}_1$ and hence $\mathcal{R}(\textbf{w})=0$ holds for every ${\bf w}\in \C$ near ${\bf c}_1\in \C$, which only happens for trivial family $(g,G,\textbf{p})_W$.
\ARNOLD{A more subtle application of this statement is used in Section \ref{sec:sinearnold}.}
It is also possible to apply this statement in a more subtle way, see \cite{LSvScompanion}.

Let $\Lambda$ be a domain in $\C$ which contains $0$.
A holomorphic motion $h_\lambda(x)$ of $g(P)$ over $(\Lambda,0)$ is called {\em asymptotically invariant of order $m$} (with respect to $(g,G,\textbf{p})_W$) if there is a subdomain $\Lambda_0\subset \Lambda$ which contains $0$ and a holomorphic motion  $\widehat{h}_\lambda(x)$ which is the lift of $h_\lambda$ over $(\Lambda_0,0)$, such that
\begin{equation}
\widehat{h}_\lambda(x)-h_\lambda(x)=O(\lambda^{m+1})\text{ as } \lambda\to 0.
\label{eq:asymptm}\end{equation}
Obviously,
\begin{lemma} $1$ is an eigenvalue of $\mathcal{A}$ if and only if there is a non-degenerate holomorphic motion which is invariant of order $1$.
\end{lemma}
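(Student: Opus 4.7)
The proof plan is essentially to unpack the definitions: the operator $\mathcal{A}$ was built precisely to capture how infinitesimal motions lift, so the claimed equivalence should reduce to matching derivatives.

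For the forward implication, suppose $h_\lambda$ is a non-degenerate holomorphic motion of $g(P)$ over some $(\Lambda,0)$ which is asymptotically invariant of order $1$, with lift $\widehat{h}_\lambda$ over $(\Lambda_0,0)$. Setting $v(x) = \frac{d}{d\lambda}h_\lambda(x)|_{\lambda=0}$, non-degeneracy of $h_\lambda$ translates to $\mathbf{v} = (v(x))_{x\in g(P)} \ne 0$. Since $h_0(x) = \widehat{h}_0(x) = x$, the condition $\widehat{h}_\lambda(x) - h_\lambda(x) = O(\lambda^2)$ is equivalent to equality of first derivatives at $\lambda = 0$; by the very definition of $\mathcal{A}$ this reads $\mathcal{A}\mathbf{v} = \mathbf{v}$, so $1$ is an eigenvalue.

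For the reverse implication, suppose $\mathbf{v} = (v(x))_{x\in g(P)}$ is a non-zero eigenvector of $\mathcal{A}$ with eigenvalue $1$. I would define
\[
h_\lambda(x) = x + \lambda\, v(x), \qquad x\in g(P).
\]
Since $g(P)$ is finite, for $|\lambda|$ small enough the points $h_\lambda(x)$ remain pairwise distinct, so this is a holomorphic motion of $g(P)$ over some $(\Lambda,0)$. Shrinking if necessary, its lift $\widehat{h}_\lambda$ with respect to $(g,G,\mathbf{p})_W$ exists over $(\Lambda_0,0)$. By the definition of $\mathcal{A}$,
\[
\left.\tfrac{d}{d\lambda}\widehat{h}_\lambda(x)\right|_{\lambda=0} = (\mathcal{A}\mathbf{v})(x) = v(x) = \left.\tfrac{d}{d\lambda}h_\lambda(x)\right|_{\lambda=0},
\]
and since $\widehat{h}_0(x) = x = h_0(x)$, expanding in $\lambda$ yields $\widehat{h}_\lambda(x) - h_\lambda(x) = O(\lambda^2)$. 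Thus $h_\lambda$ is asymptotically invariant of order $1$, and non-degenerate because $\mathbf{v} \ne 0$.

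There is no real obstacle here: both directions are immediate translations between the infinitesimal data of a holomorphic motion and its image under $\mathcal{A}$. The only minor technical point is justifying that the ansatz $h_\lambda(x) = x + \lambda v(x)$ is genuinely a holomorphic motion on a small enough disc, but this follows at once from the finiteness of $g(P)$ (which guarantees injectivity in $x$ for $|\lambda|$ sufficiently small).
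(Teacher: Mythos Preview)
Your proof is correct and is exactly the unpacking of definitions that the paper has in mind; indeed, the paper simply labels this lemma as obvious and gives no proof at all. Both directions are immediate from the definition of $\mathcal{A}$ as the operator sending the first-order data of a motion to that of its lift, and your verification that $h_\lambda(x)=x+\lambda v(x)$ is a genuine holomorphic motion on a small disc is the only point requiring a moment's thought.
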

Here, a holomorphic motion $h_\lambda(x)$ is called {\em non-degenerate} if $\frac{d}{d\lambda}h_\lambda(x)\left|\right._{\lambda=0}\not=0$ holds for some $x\in g(P)$.

A crucial step in proving this theorem is the following Lemma~\ref{lem:arbitraryasyminv} whose proof requires the following easy fact
and its corollary. 
\begin{fact}\label{ref:fact} Let $F:\mathcal{U}\to \C$ be a holomorphic function defined in an open set $\mathcal{U}$ of $\C^N$, $N\ge 1$. Let $\gamma, \widetilde{\gamma}:\D_\eps\to \mathcal{U}$ be two holomorphic curves such that $$\gamma(\lambda)-\widetilde{\gamma}(\lambda)=O(\lambda^{m+1})\text{ as }\lambda\to 0.$$
Then
$$F(\gamma(\lambda))-F(\widetilde{\gamma}(\lambda))=\sum_{i=1}^N \frac{\partial F}{\partial z_i}(\gamma(0))(\gamma_i(\lambda)-\widetilde{\gamma}_i(\lambda))+O(\lambda^{m+2})\text{ as }\lambda\to 0.$$
\end{fact}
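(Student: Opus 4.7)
The plan is a direct reduction to a Hadamard-type first-order expansion for holomorphic functions of several variables. First I would observe that since $\gamma(\lambda)-\widetilde\gamma(\lambda)=O(\lambda^{m+1})$ with $m\ge 0$, the two curves share a common basepoint $z_0:=\gamma(0)=\widetilde\gamma(0)$; after shrinking $\eps$ if necessary, both $\gamma(\D_\eps)$ and $\widetilde\gamma(\D_\eps)$ lie in a convex neighborhood $\mathcal U_0\subset\mathcal U$ of $z_0$, so that straight-line segments between points on the two curves remain inside $\mathcal U$.

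Next I would apply the standard Hadamard identity obtained by integrating $\frac{d}{dt}F(z'+t(z-z'))$ from $0$ to $1$: for $z,z'\in\mathcal U_0$,
$$F(z)-F(z')=\sum_{i=1}^{N}(z_i-z_i')\,H_i(z,z'),\qquad H_i(z,z'):=\int_{0}^{1}\frac{\partial F}{\partial z_i}\bigl(z'+t(z-z')\bigr)\,dt.$$
Each $H_i$ is holomorphic on $\mathcal U_0\times\mathcal U_0$ (by holomorphic dependence on parameters under the integral) and satisfies $H_i(z_0,z_0)=\partial F/\partial z_i(z_0)$. Substituting $z=\gamma(\lambda)$ and $z'=\widetilde\gamma(\lambda)$, and using $\gamma(\lambda),\widetilde\gamma(\lambda)=z_0+O(\lambda)$ together with continuity of $H_i$, gives $H_i(\gamma(\lambda),\widetilde\gamma(\lambda))=\partial F/\partial z_i(z_0)+O(\lambda)$. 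Multiplying by the hypothesis $\gamma_i(\lambda)-\widetilde\gamma_i(\lambda)=O(\lambda^{m+1})$ and summing over $i$ yields the claimed identity with remainder $O(\lambda)\cdot O(\lambda^{m+1})=O(\lambda^{m+2})$.

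There is essentially no obstacle: the statement is a quantitative linearization saying that the first-order Taylor coefficient of $F$ at $z_0$ governs the difference $F\circ\gamma-F\circ\widetilde\gamma$ with an error one order smaller than $|\gamma-\widetilde\gamma|$. The only minor point worth being careful about is keeping the line segments used in Hadamard's formula inside $\mathcal U$, which is automatic after shrinking $\eps$ since $\mathcal U$ is open and both curves tend to $z_0$.
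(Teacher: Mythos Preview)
Your proof is correct and is essentially the paper's own argument: both integrate $\frac{d}{dt}F\bigl(\widetilde\gamma(\lambda)+t(\gamma(\lambda)-\widetilde\gamma(\lambda))\bigr)$ over $t\in[0,1]$ and use that the partial derivatives along this segment differ from their values at $\gamma(0)$ by $O(\lambda)$. The only difference is presentational---you package the integral as the Hadamard functions $H_i$ and are a bit more explicit about shrinking $\eps$ to keep the segment inside $\mathcal U$.
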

\begin{proof} For fixed $\lambda$ small, define $\delta(t)=(1-t)\widetilde{\gamma}(\lambda)+t\gamma(\lambda)$ and $f(t)=F(\delta(t))$. Then
$$f'(t)=\sum_{i=1}^N \frac{\partial F}{\partial z_i}(\delta(t)) (\gamma_i(\lambda)-\widetilde{\gamma_i}(\lambda)).$$
Since $\delta(t)-\gamma(0)=O(\lambda)$, and
$F(\gamma(\lambda))-F(\widetilde{\gamma}(\lambda))=\int_0^1 f'(t)dt$, the equality follows.
\end{proof}

\begin{coro}\label{asym}
A holomorphic motion $h_\lambda$ of $g(P)$ is asymptotically invariant of order $m$ if and only if
\begin{enumerate}
\item [(1)] For each $x\in g(P)\setminus P_0$,
$$G_{h_\lambda(c_{1,1}),\cdots, h_\lambda(c_{1,\nu})}(h_\lambda(x))=h_\lambda(g(x)) +O(\lambda^{m+1}) \text{ as } \lambda\to 0.$$
\item [(2)] For $x=c_{0,j}\in g(P)$,
$$p_{j}(h_\lambda(c_{1,1}),\cdots, h_\lambda(c_{1,\nu}))=h_\lambda (c_{0,j})+ O(\lambda^{m+1}) \text{ as }\lambda\to 0.$$
\end{enumerate}
\end{coro}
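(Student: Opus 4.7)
The plan is to unwind the definition of asymptotic invariance and apply Fact \ref{ref:fact} in each direction. Writing $\textbf{c}_1(\lambda):=(h_\lambda(c_{1,j}))_{j=1}^\nu$, the lift $\widehat{h}_\lambda$ is by definition characterized by $\widehat{h}_\lambda(c_{0,j})=p_j(\textbf{c}_1(\lambda))$ and $G_{\textbf{c}_1(\lambda)}(\widehat{h}_\lambda(x))=h_\lambda(g(x))$ for $x\in g(P)\setminus P_0$; such a lift exists as a holomorphic motion on a sufficiently small disc around $0$ regardless of the asymptotic hypothesis, since $g'|_{P\setminus P_0}\neq 0$ makes the second equation locally solvable by the implicit function theorem. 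Substituting the two defining equations into the conditions (1) and (2), they become exactly
\begin{equation*}
G_{\textbf{c}_1(\lambda)}(\widehat{h}_\lambda(x)) - G_{\textbf{c}_1(\lambda)}(h_\lambda(x)) = O(\lambda^{m+1}) \quad\text{and}\quad \widehat{h}_\lambda(c_{0,j}) - h_\lambda(c_{0,j}) = O(\lambda^{m+1}).
\end{equation*}
The task is therefore to show that the conjunction of these two asymptotic estimates is equivalent to $\widehat{h}_\lambda(x)-h_\lambda(x)=O(\lambda^{m+1})$ for every $x\in g(P)$.

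For $x=c_{0,j}$ the two statements are literally identical, so only $x\in g(P)\setminus P_0$ needs attention. I will apply Fact \ref{ref:fact} with $F(w,z):=G_w(z)$ and the two holomorphic curves $\gamma(\lambda):=(\textbf{c}_1(\lambda),\widehat{h}_\lambda(x))$, $\widetilde{\gamma}(\lambda):=(\textbf{c}_1(\lambda),h_\lambda(x))$; these agree in the $w$-coordinates and differ only in the $z$-coordinate, by $\widehat{h}_\lambda(x)-h_\lambda(x)$. The forward direction is immediate: if this difference is $O(\lambda^{m+1})$, the fact yields $F\circ\gamma - F\circ\widetilde{\gamma}=O(\lambda^{m+1})$, which is the first displayed estimate.

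For the converse, let $k\geq 1$ be the order of vanishing of $\widehat{h}_\lambda(x)-h_\lambda(x)$ at $\lambda=0$ (setting $k=\infty$ if it vanishes identically), so $\widehat{h}_\lambda(x)-h_\lambda(x) = a\lambda^k + O(\lambda^{k+1})$ with $a\neq 0$ when $k<\infty$. Fact \ref{ref:fact} applied at this order gives
\begin{equation*}
F\circ\gamma(\lambda)-F\circ\widetilde{\gamma}(\lambda) = g'(x)\cdot a\lambda^k + O(\lambda^{k+1}),
\end{equation*}
since $\partial_z F(\gamma(0))=g'(x)\neq 0$. The first displayed estimate forces the left-hand side to be $O(\lambda^{m+1})$, and $g'(x)\cdot a\neq 0$ then forces $k\geq m+1$, i.e.\ $\widehat{h}_\lambda(x)-h_\lambda(x)=O(\lambda^{m+1})$. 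The only bookkeeping point to be careful about will be the preliminary verification that $\widehat{h}_\lambda$ exists as a holomorphic motion on a uniform small disc around $0$, which is just the standard local existence of lifts noted above.
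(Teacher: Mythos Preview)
Your proof is correct and follows essentially the same approach as the paper: both directions reduce to Fact~\ref{ref:fact} applied to $F(w,z)=G_w(z)$ (and the case $x=c_{0,j}$ is immediate). The only cosmetic difference is in the converse for $x\in g(P)\setminus P_0$: the paper applies Fact~\ref{ref:fact} directly to a local branch of $G_w^{-1}$, whereas you apply it to $G_w$ itself and use an order-of-vanishing argument together with $g'(x)\neq 0$ to invert; these are two equivalent ways of exploiting the non-degeneracy of $g'$ on $P\setminus P_0$.
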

\begin{proof} If $h_\lambda$ is asymptotically invariant of order $m$, we get (1) applying  Fact~\ref{ref:fact} to the
function $F(z_1,z_2,\cdots, z_\nu, z_{\nu+1})=G_{(z_1,z_2,\cdots, z_\nu)}(z_{\nu+1})$, and we get (2) applying it to
the function $F(z_1,z_2,\cdots, z_\nu)=p_j(z_1,z_2,\cdots, z_\nu)$. Vice versa, assume that (1)-(2) hold.
Given $x\in g(P)\setminus P_0$, let $F(z_1,z_2,\cdots, z_\nu, z_{\nu+1})$ be a local branch of $G_{(z_1,z_2,\cdots, z_\nu)}^{-1}(z_{\nu+1})$ which is a well defined holomorphic function in a neighborhood of $(\textbf{c}_1,g(x))$.
Let $V(\lambda)=G_{h_\lambda(c_{1,1}),\cdots, h_\lambda(c_{1,\nu})}(h_\lambda(x))$ and
$\hat V(\lambda)=h_\lambda(g(x))$. By (1), $V(\lambda)-\hat V(\lambda)=O(\lambda^{m+1})$.
Hence, by Fact \ref{ref:fact},
$$h_\lambda(x)-\hat h_\lambda(x)=F(h_\lambda(c_{1,1}),\cdots, h_\lambda(c_{1,\nu}), V(\lambda))-
F(h_\lambda(c_{1,1}),\cdots, h_\lambda(c_{1,\nu}), \hat V(\lambda))=O(\lambda^{m+1}).$$
For $x=c_{0,j}$, the claim is straightforward.
\end{proof}

\begin{lemma}\label{lem:arbitraryasyminv}
One has the following:
\begin{enumerate}
\item Assume $(g, G,\textbf{p})_W$ has the lifting property.
Suppose that there is a holomorphic motion $h_\lambda$ of
$g(P)$ over $(\Lambda, 0)$ which is asymptotically invariant of order $m$ for some $m\ge 1$. Then there is a non-degenerate holomorphic motion $H_\lambda$ of $g(P)$ over some $(\tilde\Lambda, 0)$ which is asymptotically invariant of order $m+1$. Besides,
$H_\lambda(x)-h_\lambda(x)=O(\lambda^{m+1})$ as $\lambda\to 0$ for all $x\in g(P)$.
\item Assume $(g, G,\textbf{p})_W$ is real and has the real lifting property. Suppose that
there is a real holomorphic motion $h_\lambda$ of
$g(P)$ over $(\Lambda, 0)$ which is asymptotically invariant of order $m$ for some $m\ge 1$. Then there is a non-degenerate real holomorphic motion $H_\lambda$ of $g(P)$ over some $(\tilde\Lambda, 0)$ which is asymptotically invariant of order $m+1$. Besides,
$H_\lambda(x)-h_\lambda(x)=O(\lambda^{m+1})$ as $\lambda\to 0$ for all $x\in g(P)$.
\end{enumerate}
\end{lemma}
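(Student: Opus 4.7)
The plan is to iterate the lifting procedure and to use the fact that, at the level of the leading Taylor coefficient $\lambda^{m+1}$, the linearisation of the lifting operator is exactly $\mathcal{A}$; this will let me kill the obstruction to order-$(m{+}1)$ asymptotic invariance by solving a finite-dimensional linear equation in $I-\mathcal{A}$. First, after possibly shrinking $\Lambda$, I would extend $h_\lambda$ to a holomorphic motion of $g(P)$ over $(\D,0)$ and invoke the lifting property to obtain a sequence of holomorphic motions $h^{(k)}_\lambda$ of $g(P)$ over $(\D_\eps,0)$ with $h^{(0)}=h$, $h^{(k+1)}$ the lift of $h^{(k)}$, and a uniform bound $|h^{(k)}_\lambda(x)|\le M$. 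Using Corollary~\ref{asym} and Fact~\ref{ref:fact}, a short induction on $k$ shows that the asymptotic invariance of $h^{(0)}$ of order $m$ propagates: $h^{(k)}_\lambda(x)-h^{(0)}_\lambda(x)=O(\lambda^{m+1})$ for every $k$.

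Next I would identify the recursion for the $\lambda^{m+1}$-coefficients. Writing $h^{(k)}_\lambda(x)=h^{(0)}_\lambda(x)+\delta^{(k)}(x)\lambda^{m+1}+O(\lambda^{m+2})$ with $\delta^{(0)}=0$ and $\delta^{(1)}=:u$, I would linearise the defining relations $G_{\mathbf{c}_1^{(k)}(\lambda)}(h^{(k+1)}_\lambda(x))=h^{(k)}_\lambda(g(x))$ and $h^{(k+1)}_\lambda(c_{0,j})=p_j(\mathbf{c}_1^{(k)}(\lambda))$ around $h^{(0)}$. Using $\partial_z G|_{(\mathbf{c}_1,x)}=Dg(x)$, $\partial_{w_k}G|_{(\mathbf{c}_1,x)}=L_k(x)$, and the implicit differentiation rule, a perturbation $h\mapsto h+e(x)\lambda^{m+1}$ changes the lift, at leading order, by
$$\frac{e(g(x))-\sum_k L_k(x)e(c_{1,k})}{Dg(x)}\lambda^{m+1}=(\mathcal{A}e)(x)\lambda^{m+1}$$
for $x\in g(P)\setminus P_0$, and by $\sum_k p_{j,k}e(c_{1,k})\lambda^{m+1}=(\mathcal{A}e)(c_{0,j})\lambda^{m+1}$ at $c_{0,j}$. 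Applying this identity with $e=\delta^{(k)}$ yields $\delta^{(k+1)}=u+\mathcal{A}\delta^{(k)}$, so
$$\delta^{(k)}=\sum_{j=0}^{k-1}\mathcal{A}^j u.$$
By Cauchy's estimates the uniform bound $|h^{(k)}_\lambda(x)|\le M$ forces $\sup_k\|\delta^{(k)}\|<\infty$, so these partial sums are bounded.

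The key step is then to extract $u\in\operatorname{range}(I-\mathcal{A})$. By Proposition~\ref{prop:lift2spectrum}, every eigenvalue of $\mathcal{A}$ of modulus $1$ is semisimple, so $\C^{\#g(P)}=\ker(I-\mathcal{A})\oplus M$ with $M$ an $\mathcal{A}$-invariant complement on which $I-\mathcal{A}$ is invertible. Splitting $u=u_1+u_2$ accordingly gives $\sum_{j=0}^{k-1}\mathcal{A}^j u=ku_1+(I-\mathcal{A})^{-1}(I-\mathcal{A}^k)u_2$; the second term is bounded since the orbit of $u_2$ under $\mathcal{A}$ is bounded, and boundedness of the total forces $u_1=0$. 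Choose $w$ with $(I-\mathcal{A})w=u$ and set $H_\lambda(x):=h_\lambda(x)+w(x)\lambda^{m+1}$ for $\lambda$ in a disc $\tilde\Lambda$ small enough that $H_\lambda$ remains injective on the finite set $g(P)$. Since $H$ and $h$ have identical Taylor jets through order $m$, $H$ inherits non-degeneracy from $h$ at the first-order level (recall $m\ge 1$), and the same linearisation gives $\hat H_\lambda=\hat h_\lambda+(\mathcal{A}w)(x)\lambda^{m+1}+O(\lambda^{m+2})$. Hence
$$H_\lambda-\hat H_\lambda=(h_\lambda-\hat h_\lambda)+\big((I-\mathcal{A})w\big)\lambda^{m+1}+O(\lambda^{m+2})=-u\lambda^{m+1}+u\lambda^{m+1}+O(\lambda^{m+2})=O(\lambda^{m+2}),$$
so $H$ is asymptotically invariant of order $m+1$.

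For the real case the same proof runs verbatim after replacing every holomorphic motion by its real-symmetric version: the real lifting property and the real statement of Proposition~\ref{prop:lift2spectrum} force $\delta^{(k)}$, $u$, and then $w$ to lie in $\R^{\#g(P)}$, so $H_\lambda$ is real. The main obstacle I expect is the spectral-averaging step: carefully controlling the higher-order remainders in the Taylor expansion of the two lifting equations so as to prove the clean recursion $\delta^{(k+1)}=u+\mathcal{A}\delta^{(k)}$, and then converting the quantitative bound $\sup_k\|\sum_{j<k}\mathcal{A}^j u\|<\infty$ into the algebraic membership $u\in\operatorname{range}(I-\mathcal{A})$ via semisimplicity. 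Everything else is essentially formal bookkeeping.
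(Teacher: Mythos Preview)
Your argument is correct and reaches the same conclusion, but by a genuinely different route than the paper.  The paper never isolates the $\lambda^{m+1}$--coefficient or invokes the spectral decomposition of $\mathcal A$.  Instead it takes the Ces\`aro averages
\[
\psi_\lambda^{(k)}=\tfrac1k\sum_{i=0}^{k-1}h_\lambda^{(i)},\qquad
\varphi_\lambda^{(k)}=\tfrac1k\sum_{i=1}^{k}h_\lambda^{(i)},
\]
and uses Fact~\ref{ref:fact} to show that the averaged motions satisfy the order-$(m{+}1)$ invariance relations \emph{exactly}, because the linear error terms average to zero.  A Montel-type subsequential limit of the $\psi^{(k)}$ then furnishes $H_\lambda$.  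This avoids any appeal to Proposition~\ref{prop:lift2spectrum}: the averaging does the work that your spectral splitting $\C^{\#g(P)}=\ker(I-\mathcal A)\oplus\operatorname{range}(I-\mathcal A)$ does.

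Your approach, by contrast, reduces everything to the finite-dimensional recursion $\delta^{(k+1)}=u+\mathcal A\,\delta^{(k)}$ on the $(m{+}1)$-jet, then uses Proposition~\ref{prop:lift2spectrum} (semisimplicity of unit-circle eigenvalues) to force $u\in\operatorname{range}(I-\mathcal A)$, and finally corrects by $w\lambda^{m+1}$ with $(I-\mathcal A)w=u$.  This is more algebraic and makes the mechanism---that the only obstruction to raising the order lives in $\ker(I-\mathcal A)$ and is killed by boundedness---completely explicit.  The trade-off is a mild logical dependency on Proposition~\ref{prop:lift2spectrum} (harmless, since that proposition also follows from the lifting property) and the need to check that the formal recursion for the $(m{+}1)$-coefficients is exact; this last point is a genuine but entirely routine consequence of Fact~\ref{ref:fact}, since only the $(m{+}1)$-Taylor coefficient is at stake and the higher-order remainders do not contribute there.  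Your concern about ``controlling higher-order remainders'' is therefore not an obstacle.  One small remark: as in the paper's own proof, you tacitly assume the input motion $h_\lambda$ is non-degenerate when concluding that $H_\lambda$ is; this is harmless for the intended application but worth stating.
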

\begin{proof} We shall only prove the first statement as the proof of the second is the same with obvious change of terminology. Let $h_\lambda$ be a non-degenerate holomorphic motion of
$g(P)$ over $(\Lambda, 0)$  which is asymptotically invariant of order $m$.
%
By assumption that $(g, G, \textbf{p})_W$ has the lifting property , there exists a smaller domain $\Lambda_0\subset \Lambda$ and holomorphic motions $h^{(k)}_\lambda$ over $\Lambda_0$, $k=0,1,\ldots$ such that
$h^{(0)}_\lambda=h_\lambda$ and such that $h^{(k+1)}_\lambda$ is the lift of
$h^{(k)}_\lambda$ over $(\Lambda_0,0)$ for each $k\ge 0$. Moreover, the functions $h^{(k)}_\lambda$ are uniformly bounded. For each $k\ge 1$, define
$$\psi_\lambda^{(k)}(x)=\frac{1}{k}\sum_{i=0}^{k-1}h_\lambda^{(i)}(x),$$
and $$\varphi_\lambda^{(k)}(x)=\frac{1}{k}\sum_{i=1}^k h_\lambda^{(i)}(x).$$
By shrinking $\Lambda_0$, we may assume that there exists $k_n\to\infty$, such that $\psi_\lambda^{(k_n)}(x)$ converges uniformly in $\lambda\in\Lambda_0$ as $k_n\to\infty$ to a holomorphic function $H_\lambda(x)$. Shrinking $\Lambda_0$ furthermore if necessary, $H_\lambda$ defines a holomorphic motion of $g(P)$ over $(\Lambda_0,0)$. Clearly, $\varphi_\lambda^{(k_n)}(x)$ converges uniformly to $H_\lambda(x)$ as well.

Let us show that $H_\lambda$ is asymptotically invariant of order $m+1$ by applying the fact above. 
Due to Corollary~\ref{asym} (and taking $k=k_n\to \infty$ in the next equations)  this amounts to showing:
\begin{enumerate}
\item [(i)] For each $x\in g(P)\setminus P_0$, and any $k\ge 1$,
$$G_{\psi^{(k)}_\lambda(c_{1,1}),\cdots, \psi_\lambda^{(k)}(c_{1,\nu})}(\varphi_\lambda^{(k)}(x))=\psi_\lambda^{(k)}(g(x)) +O(\lambda^{m+2}) \text{ as } \lambda\to 0.$$
\item [(ii)] For $x=c_{0,j}\in g(P)$,
$$p_{j}(\psi^{(k)}_\lambda(c_{1,1}),\cdots, \psi_\lambda^{(k)}(c_{1,\nu}))=\varphi^{(k)}_\lambda (c_{0,j})+ O(\lambda^{m+2}) \text{ as }\lambda\to 0.$$
\end{enumerate}

Let us prove (i). Fix $x\in g(P)\setminus P_0$ and $k\ge 1$. Let $F(z_1,z_2,\cdots, z_\nu, z_{\nu+1})=G_{(z_1,z_2,\cdots, z_\nu)}(z_{\nu+1})$. By the construction of $h^{(k)}_\lambda$,
we have
$$F(h_\lambda^{(i)}(c_{1,1}),\cdots, h_\lambda^{(i)}(c_{1,\nu}), h_\lambda^{(i+1)}(x))=h_\lambda^{(i)}(g(x))$$
for every $i\ge 0$. Thus
\begin{equation}\label{eqn:arbitraryasyminv1}
\psi^{(k)}_\lambda(g(x))=\frac{1}{k} \sum_{i=0}^{k-1}F(h_\lambda^{(i)}(c_{1,1}),\cdots, h_\lambda^{(i)}(c_{1,\nu}), h_\lambda^{(i+1)}(x)).
\end{equation}
Since all the functions $h_\lambda^{(i)}(x)$,$\psi^{(k)}_\lambda(x)$, $\varphi^{(k)}_\lambda(x)$ have the same derivatives up to order $m$ at $\lambda=0$, applying Fact 6.2, we obtain
\begin{multline*}
F(h_\lambda^{(i)}(c_{1,1}),\cdots, h_\lambda^{(i)}(c_{1,\nu}), h_\lambda^{(i+1)}(x))-F(\psi_\lambda^{(k)}(c_{1,1}),\cdots, \psi^{(k)}_\lambda(c_{1,\nu}),\varphi^{(k)}_\lambda(x))\\
=\sum_{j=1}^{\nu}\frac{\partial F}{\partial z_j}(\textbf{c}_1,x) (h_\lambda^{(i)}(c_{1,j})-\psi_\lambda^{(k)}(c_{1,j}))+\frac{\partial F}{\partial z_{\nu+1}}(\textbf{c}_1, x) (h_\lambda^{(i+1)}(x)-\varphi^{(k)}_\lambda(x))+O(\lambda^{m+2}),
\end{multline*}
as $\lambda\to 0$.  
Summing over $i=0,1,\cdots,k-1$ and using 
 the definition of  $\psi^{(k)}_\lambda(x)$  and  $\varphi^{(k)}_\lambda(x)$ we obtain 
\begin{multline*}
\frac{1}{k} \sum_{i=0}^{k-1}F(h_\lambda^{(i)}(c_{1,1}),\cdots, h_\lambda^{(i)}(c_{1,\nu}), h_\lambda^{(i+1)}(x))\\
=F(\psi^{(k)}_\lambda(c_{1,1}),\cdots, \psi^{(k)}(c_{1,\nu}), \varphi^{(k)}_\lambda(x))+ O(\lambda^{m+2}).
\end{multline*}
Together with (\ref{eqn:arbitraryasyminv1}), this implies the equality in (i).

For (ii), we  use $F(z_1,\cdots, z_\nu)=p_j(z_1, \cdots, z_\nu)$ and argue in a similar way.
\end{proof}

\begin{proof}[Proof of the Main Theorem]
By Proposition~\ref{prop:lift2spectrum}, all eigenvalues of $\mathcal{A}$ are contained in $\overline{\D}$. It remains to prove (2). Let $L=\{v\in \C^\nu: D\mathcal{R}(\textbf{c}_1, v)=0\}$ and let $d$ be the dimension of $L$. By Theorem~\ref{thm:specAtrans}, $L$
has the same dimension as
the eigenspace of $\mathcal{A}$ associated with eigenvalue $1$. Moreover, by Lemma~\ref{lem:arbitraryasyminv}, for each $m\ge 1$ and each $v\in L$, there is a holomorphic motion $h_\lambda(x)$ of $g(P)$ over $\D_\eps$ for some $\eps>0$ which is asymptotic invariant of order $m$ and satisfies  $h_\lambda(c_{1,j})=c_{1,j}+ v(c_{1,j}) \lambda+O(\lambda^2)$ as $\lambda\to 0$. Putting $w_j(\lambda)=h_\lambda(c_{1,j})$, we obtain a holomorphic curve $\lambda\mapsto w(\lambda)=(w_j(\lambda))_{j=1}^\nu$, $\lambda\in \D_\eps$, such that $w_j'(0)=v(c_{1,j})$ and
$$R_j(w(\lambda))=O(\lambda^{m+1})\mbox{ for all }j=1,\dots,\nu.$$

If $d=0$, i.e., $L=\{0\}$, then $D\mathcal{R}(\textbf{c}_1)$ is invertible, so $\mathcal{R}$ is a local diffeomorphism, and for a small neighborhood $W'$ of $\textbf{c}_1$, the set in (\ref{allornoth}) consists of a single point $\textbf{c}_1$.

Now assume $d=\nu$, i.e., $L=\C^\nu$. We claim that $\mathcal{R}(w)\equiv 0.$
Otherwise, there exists $m\ge 1$ such that $R(w)=\sum_{k=m}^\infty P_k(w-\textbf{c}_1)$ in a neighborhood of $\textbf{c}_1$, where $P_k(u)$ is a homogeneous polynomial in $u$ of degree $k$ and $P_m(u)\not\equiv 0$.
 Therefore, there exists $\textbf{v}\in\C^d$ such that $P_m(\lambda\textbf{v})=A \lambda^m$ for some $A\not=0$.  By the argument above, there is holomorphic curve $\lambda\mapsto w(\lambda)$ passing through $\textbf{c}_1$ and tangent to $\textbf{v}$ at $\lambda=0$, such that $|\mathcal R(w(\lambda))|=O(\lambda^{m+1})$. However,
$$\mathcal R(w(\lambda))=P_m(w(\lambda)-\textbf{c}_1))+\sum_{k>m} P_k(w-\textbf{c}_1(\lambda))=A\lambda^m +O(\lambda^{m+1}),$$ a contradiction.


The case $0<d<\nu$ can be done similarly. To be definite, let us assume that
\begin{equation}\label{eqn:Rjinvertible}
\left.\frac{\partial(R_1, R_2,\cdots, R_{d'})}{\partial (w_1, w_2, \cdots, w_{d'})}\right|_{w=\textbf{c}_1}\not=0,
\end{equation}
where $d'=\nu-d$. By the Implicit Function Theorem, there is holomorphic map $\Phi: B\to \C^{d'}$, where $B$ is a neighborhood of
$\textbf{u}_0=(c_{1, d'+1}, c_{1, d'+2}, \cdots, c_{1,\nu})$ in $\C^{d}$
such that
\begin{equation}\label{eqn:Rjled'}
R_j(\Phi(u), u)=0 \mbox{ for all } 1\le j\le d', u\in B,
\end{equation}
and $t=\Phi(u)$ is the only solution of $R_j(t, u)=0, 1\le j\le d'$, in a fixed neighborhood of $(c_{1,1}, c_{1,2},\cdots, c_{1,d})$. It suffices to prove that
$$R_j(\Phi(u), u)=0 \text{ for } u \text{ close to } \textbf{u}_0, d'<j\le \nu.$$
To this end, we only need to show that for any $m\ge 1$, and any $\textbf{e}\in \C^d$, there is a holomorphic curve $u(\lambda)$ in $B$ which passes through $\textbf{u}_0$ at $\lambda=0$, such that $u'(0)=\textbf{e}$ and
\begin{equation}\label{eqn:Rjged'}
R_j(\Phi(u(\lambda)), u(\lambda))=O(\lambda^{m+1}), \text{ as } \lambda\to 0, d'<j\le \nu.
\end{equation}
Indeed, the curve $\tilde{w}(\lambda)=(\Phi(\textbf{u}_0+\lambda \textbf{e}), \textbf{u}_0+\lambda \textbf{e})$ is tangent to $L$ at $\textbf{c}_1$. Thus by the argument in the first paragraph of the proof, there is a curve $w(\lambda)=(t(\lambda), u(\lambda))\in \C^{d'}\times \C^d$, tangent to $\tilde{w}(\lambda)$ at $\lambda=0$ such that
\begin{equation}\label{eqn:Rjall}
R_j(w(\lambda))=O(\lambda^{m+1})\text{ as } \lambda\to 0, 1\le j\le \nu.
\end{equation}
Together with (\ref{eqn:Rjinvertible}) and (\ref{eqn:Rjled'}), this implies that
$$|\Phi(u(\lambda))-t(\lambda)|=O(\lambda^{m+1}).$$
Finally, by (\ref{eqn:Rjall}), we obtain (\ref{eqn:Rjged'}), completing the proof of the Main Theorem.
\end{proof}

\section{Families of the form $f_\lambda(x)=f(x)+\lambda$ and $f_\lambda(x)=\lambda f(x)$}
\label{sec:oneparameter}
In this section we will  apply these techniques to show that one has monotonicity and the transversality properties (\ref{eq:trans}) and (\ref{eq:trans2}) within certain families
of real maps of the form $f_\lambda(x)=f(x)+\lambda$ and $f_\lambda(x)=\lambda \cdot f(x)$
where $x\mapsto f(x)$ has one critical value (and is unimodal - possibly on a subset $\R$) or satisfy symmetries.  There are quite a few
papers giving examples for which one has non-monotonicity for such families,  see for example \cite{Br,Ko,NY,Zd}.
In this section we will prove several theorems which show monotonicity for a fairly wide class
of such families.


In Subsection~\ref{subsec:additive} we show that the methods we developed in the previous section
apply if one has something like a polynomial-like map $f\colon U\to V$  with sufficiently  {\lq}big complex bounds{\rq}. This gives yet another proof for monotonicity
for real families of the form $z^\ell+c$, $c\in \R$ in the setting when $\ell$ is an even integer.
 We also apply this method to a family of maps with a flat critical point
in Subsection~\ref{subsec:flatcritical}. In Subsection~\ref{subsec:multiplicative} we show how to obtain
the lifting property in the setting of one parameter families of the form $f_a(x)=af(x)$ with  $f$ in some rather general class of maps.

\subsection{Families of the form $f_\lambda(x)=f(x)+\lambda$ with a single critical point}\label{subsec:additive}
Let  $f\colon U\to V$ be a map from the class $\mathcal F$   defined in Subsection~\ref{subsec:defclassesEF}.
Consider a marked map $g$ with $g=f+g(0)$  
for some $f\in \mathcal F$ from a finite set
$P$ into itself with $P\supset P_0=\{0\}$, $P\setminus P_0\subset U$. In other words,  $g$ extends to a holomorphic map $g: U_g\to V_g$ where
\begin{itemize}
\item $U_g$ is a bounded open set in $\C$ such that $U_g\supset P\setminus \{0\}$ and $0\in \overline{U}_g$;
\item $V_g$ is a bounded open set in $\C$ such that $c_1:=g(0)\in V_g$;
\item $g: U_g\setminus \{0\}\to V_g\setminus \{c_1\}$ is an unbranched covering;
\item $V_g\supset B(c_1;\diam(U_g)) \supset U_g$.
\end{itemize}
Next define a local holomorphic deformation $(g,G,\textbf{p})_W$ of $g$ as follows: $G_w(z)=g(z)+(w-g(0))$ and $\textbf{p}(w)=0$ for all $w\in W:=\C$.

\begin{theorem}\label{single} Let  $(g,G,\textbf{p})_W$  be as above. 
Then
\begin{enumerate}
\item $(g,G,{\bf p})_W$ satisfies the lifting property;
\item  the spectrum of the operator $\mathcal{A}$ is contained in $\overline{\D}\setminus \{1\}$.
\end{enumerate}
If, in addition,  the robust separation property
$ V_g\supset \overline{B(c_1;\diam (U_g))} \supset U_g$
holds, then the spectral radius of $\mathcal{A}$ is strictly smaller than $1$ and
$$\det (I-\rho \mathcal{A})=\sum_{i=0}^{q-1}\frac{\rho^i}{Dg^i(c_1)}\not=0$$
holds for all $|\rho|\le 1$. In particular, if $g,G$ are real then  $\sum_{i=0}^{q-1}\frac{1}{Dg^i(c_1)}>0$ .
\end{theorem}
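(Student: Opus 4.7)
The plan is to treat the conclusions in turn, combining the general machinery of the earlier sections with the specific geometry of the class $\mathcal{F}$.

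To establish the lifting property, the strategy is to build successive lifts by inverting $G_w$ and to bound them uniformly using the chain of inclusions $U_g \subset B(c_1; \diam U_g) \subset V_g$ that is built into the definition of $\mathcal{F}$. Given a holomorphic motion $h^{(0)}_\lambda$ over $(\D,0)$, the lift $h^{(k+1)}_\lambda$ must satisfy $h^{(k+1)}_\lambda(0) = 0$ and $G_{c_1^{(k)}(\lambda)}(h^{(k+1)}_\lambda(x)) = h^{(k)}_\lambda(g(x))$ for $x \in g(P) \setminus \{0\}$. Because $G_w(z) = g(z) + (w-c_1)$ and $g\colon U_g \setminus \{0\} \to V_g \setminus \{c_1\}$ is an unbranched covering, such a lift exists as long as $h^{(k)}_\lambda(g(x)) - (c_1^{(k)}(\lambda)-c_1)$ stays in $V_g \setminus \{c_1\}$, and the containment $B(c_1;\diam U_g) \subset V_g$ provides exactly the required room once $\eps>0$ is chosen small enough so that the motion of $c_1$ is under control. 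The resulting preimages lie in $U_g$, a bounded set independent of $k$, so the lifts are uniformly bounded and the induction closes. Proposition~\ref{prop:lift2spectrum} then yields $\mbox{spec}(\mathcal{A}) \subset \overline{\D}$.

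To rule out $1$ as an eigenvalue, I would apply the Main Theorem with $\nu=1$: its second alternative would force $\{w \in W' : \mathcal{R}(w) = 0\}$ to be a positive-dimensional complex submanifold of $W = \C$, hence an open subset, so by analyticity $\mathcal{R} \equiv 0$; but $\mathcal{R}$ is a non-constant polynomial in $w$, a contradiction. The explicit formula for $\det(I - \rho\mathcal{A})$ is then immediate from Example~\ref{example1}: the identity $\partial_w G_w \equiv 1$ makes the quantities $L_i$ there all equal to $1$ (and the term involving $p_{1,1}$ vanishes since $\textbf{p} \equiv 0$), simplifying the characteristic polynomial to $\sum_{i=0}^{q-1} \rho^i/Dg^i(c_1)$.

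For the robust case, I would invoke Proposition~\ref{prop:perturb2spectrum}. Choosing a polynomial $Q$ as in that proposition and a small $\xi \in (0,1)$, the maps $\varphi_\xi \circ g$ and $\varphi_\xi \circ G$ are small deformations, and the strict containment $\overline{B(c_1;\diam U_g)} \subset V_g$ gives enough margin for the argument of the first paragraph to run verbatim for the perturbed triple $(\varphi_\xi \circ g, \varphi_\xi \circ G, \textbf{p} \circ \psi_\xi)$, so it too has the lifting property. The proposition then gives spectral radius at most $1 - \xi < 1$, so $\det(I - \rho\mathcal{A})$ has no zero in $\overline{\D}$. In the real case this is a real polynomial taking value $1$ at $\rho = 0$ with no real root in $[-1,1]$, so by the Intermediate Value Theorem it remains strictly positive throughout $[-1,1]$; evaluating at $\rho = 1$ yields $\sum_{i=0}^{q-1} 1/Dg^i(c_1) > 0$. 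The main technical point sits in the first paragraph: verifying that successive lifts do not eventually displace $c_1$ out of $V_g$, which is exactly the quantitative geometric content built into the class $\mathcal{F}$.
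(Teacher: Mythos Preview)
Your approach is the same as the paper's: prove the lifting property by showing that the class of motions with values in $U_g$ is closed under lifting (the paper packages this as a class $\mathcal{M}_\Delta$ of motions $h_\lambda$ with $h_\lambda(x)\in U_g$ for $x\in g(P)\setminus\{0\}$ and $h_\lambda(0)=0$, which makes the induction cleaner than your phrasing ``once $\eps>0$ is chosen small enough so that the motion of $c_1$ is under control''---the point is that \emph{all} points of the motion, not just $c_1$, stay in $U_g$, and this is self-perpetuating); then invoke Proposition~\ref{prop:lift2spectrum}, the Main Theorem with $\nu=1$, Example~\ref{example1}, and Proposition~\ref{prop:perturb2spectrum} exactly as you do.

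One genuine slip: you write ``$\mathcal{R}$ is a non-constant polynomial in $w$''. For general $f\in\mathcal{F}$ (e.g.\ the flat map $be^{-1/|x|^\ell}$) the map $g$ is not polynomial, so $\mathcal{R}(w)=G_w^{q-1}(w)$ is merely holomorphic. The paper is equally informal here---it simply says that otherwise ``for all parameters $w\in W$ the $G_w$ would have the same dynamics'', which amounts to the assertion that the additive family $G_w=g+(w-c_1)$ is non-trivial. You should replace ``polynomial'' by ``holomorphic'' and supply the one-line dynamical reason (the critical value $w$ of $G_w$ genuinely varies, so the critical relation cannot persist on an open set of parameters), rather than appealing to a polynomial degree that does not exist.
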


\begin{proof}
Let us show that 
$(g, G, \textbf{p})_{\C}$  satisfies the lifting property.
For each domain $\Delta\ni 0$ in $\C$, let $\mathcal{M}_\Delta$ denote the collection of all holomorphic motions $h_\lambda$ of $g(P)$ over $(\Delta,0)$ such that for all $\lambda\in \Delta$,
\begin{equation}
h_\lambda(x)\in U_g\text{ for all }x\in g(P)\setminus \{0\} \mbox{ and }h_\lambda(0)=0.
\end{equation}

{\bf Claim.} Let $\Delta\ni 0$ be a simply connected domain in $\C$. Then any holomorphic motion $h_\lambda$ in $\mathcal{M}_\Delta$ has a lift $\widehat{h}_\lambda$ which is again in the class $\mathcal{M}_\Delta$.

Indeed, $g(P\setminus\{0\})\subset g(U_g\setminus\{0\})=V_g\setminus \{g(0)\}$. So for each $x\in P\setminus\{0\}$, $g(x)\not=g(0)$, thus for any $\lambda\in\Delta$, $$0<|h_\lambda(g(x))-h_\lambda(g(0))|<\diam (U_g),$$
hence by  $V_g\supset B(c_1;\diam(U_g)) \supset U_g$,
$$h_\lambda(g(x))-h_\lambda(g(0))+g(0)\in V_g\setminus \{g(0)\}.$$
Since $g:U_g\setminus\{0\}\to V_g\setminus \{g(0)\}$ is an unbranched covering and $\Delta$ is simply connected, there is a holomorphic function $\lambda\mapsto \widehat{h}_\lambda(x)$, from $\Delta$ to $U_g\setminus\{0\}$, such that $\widehat{h}_0(x)=x$ and
$$g(\widehat{h}_\lambda(x))=h_\lambda(g(x))-h_\lambda(g(0))+g(0),\mbox{ i.e. }\,\,
G_{h_\lambda(g(0))}(\widehat{h}_\lambda(x))=h_\lambda(g(x)).$$
Define $\widehat{h}_\lambda(0)=0$ if $0\in g(P)$. Then $\widehat{h}_\lambda$ is a lift of $h_\lambda$ over $\Delta$.

For any holomorphic motion $h_\lambda$ of $g(P)$ over $(\Lambda,0)$ with $h_\lambda(0)=0$, there is a simply connected sub-domain $\Delta\ni 0$ such that the restriction of $h_\lambda$ to $\Delta$ belongs to the class $\mathcal{M}_{\Delta}$. It follows that $(g,G,\textbf{p})_W$ has the lifting property.

Therefore the assumptions of the Main Theorem
are satisfied.
The operator $\mathcal A$ cannot have an eigenvalue $1$ 
because otherwise for
all parameters $w\in W$ the $G_w$ would have the same dynamics. Hence, (2) in the conclusion of the theorem follows.

If the robust separation property
$ V_g\supset \overline{B(c_1;\diam (U_g))} \supset U_g$ holds, then
Proposition~\ref{prop:perturb2spectrum} applies and therefore the spectral radius of $\mathcal{A}$ is strictly smaller than $1$.
As in Example~\ref{example1} the conclusion follows.
\end{proof}

\begin{coro}\label{cor:zd}
For any even integer $d$, transversality condition (\ref{eq:trans2}) holds and  the topological entropy of $g_c(z)=z^d+c$ depends monotonically on $c\in \R$.
\end{coro}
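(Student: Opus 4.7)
The plan is to apply Theorem~\ref{single} to the family $g_c(z)=z^d+c$ and then pass from positive transversality at every real superstable parameter to the desired monotonicity of kneading sequences and topological entropy.

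First I would verify that $f(z)=z^d$ lies in $\mathcal F$ with the robust separation property. With $U=B(0,R)$ and $V=B(0,R^d)$, the restriction $f\colon U\setminus\{0\}\to V\setminus\{0\}$ is an unbranched $d$-fold covering, $0\in\overline U$, and the robust separation $V\supset\overline{B(0,\diam U)}\supset U$ reduces to $R^d>2R$, which holds for every $R>2^{1/(d-1)}$. Choosing $R$ large I may also arrange $c\in U$ and $g_c^n(0)\in U$ for all $n\ge 0$ and every $c$ in the (compact) real parameter window where the critical orbit is bounded; outside that window the entropy is constant, so monotonicity is automatic there.

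Second, fix a real critically periodic parameter $c_*$ (so that $g_{c_*}^q(0)=0$ for some minimal $q\ge 1$), set $g=g_{c_*}$, $U_g=B(0,R)$, $V_g=B(c_*,R^d)$ with $R$ as above, and apply the second half of Theorem~\ref{single}. The triple $(g,G,\textbf{p})_{\C}$ is real and the theorem yields
\[
\det(I-\mathcal A)=\sum_{i=0}^{q-1}\frac{1}{Dg^i(c_*)}>0.
\]
By the computation in Example~\ref{example1} together with the relation~(\ref{eq:Drho}), this is exactly the positive transversality condition~(\ref{eq:trans2}) for the family $g_c$. The preperiodic case (where $g_{c_*}^q(0)$ lands on a repelling cycle rather than on $0$) is handled in the same way, with the same choice of $U_g,V_g$.

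Third, monotonicity of the kneading sequence, and consequently of the topological entropy, follows from the standard principle already recorded for the class $\mathcal F$ in the introductory monotonicity corollary: positive transversality at every real critically periodic parameter fixes the sign of $\frac{d}{dc}g_c^q(0)|_{c=c_*}$ relative to $Dg_{c_*}^{q-1}(c_*)$, which determines uniformly the direction in which $\mathcal K(g_c)$ moves as $c$ crosses $c_*$. The assumption that $d$ is even enters only at this last step, to guarantee that $g_c|_\R$ is unimodal with its unique minimum at $0$ so that the kneading formalism applies. No genuine obstacle arises, since the analytic heavy lifting is already absorbed into Theorem~\ref{single}; the only thing to check is the elementary geometric inequality $R^d>2R$.
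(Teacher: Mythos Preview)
Your proof is correct and follows exactly the route the paper intends: the corollary is stated immediately after Theorem~\ref{single} without proof because $f(z)=z^d$ manifestly lies in $\mathcal F$ with the robust separation property (your choice $U=B(0,R)$, $V=B(0,R^d)$ with $R>2^{1/(d-1)}$ is the obvious one), and the passage from positive transversality at every superstable parameter to monotonicity of the kneading invariant and entropy is the standard argument already invoked in the introductory corollary and the remark following it. Your treatment spells out the details the paper leaves implicit; nothing is missing.
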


\subsection{A unimodal family map $f\in \mathcal F$ with a flat critical  point} 
\label{subsec:flatcritical}
Fix $\ell\ge 1$, $b> 2(e\ell)^{1/\ell}$ and consider
$$f_c(x)=\left\{ \begin{array}{rl} b e^{-1/|x|^\ell} +c  &\mbox{ for }x\in \R\setminus \{ 0\}, \\
c & \mbox{ for }x=0 .\end{array} \right.
$$
Note that $\R^+ \ni x\mapsto 2x e^{1/x^\ell}$ has a unique critical point at $x=\ell^{1/\ell}$ corresponding to a minimum value $2(\ell e)^{1/\ell}$.
Therefore the assumption on $b$ implies that $b= 2x e^{1/x^\ell}$ has a unique solution $x=\beta\in (0, \ell^{1/\ell})$.
This implies in particular that  the map $f_{-\beta}$ has the Chebeshev combinatorics: $f_{-\beta}(0)=-\beta$ and $f_{-\beta}(\beta)=\beta$.
Note that
$$D f_{-\beta}(\beta)= be^{-1/\beta^\ell} \frac{\ell }{\beta^{\ell+1}}= \frac{2\ell}{\beta^\ell}>2.$$
Therefore, there exists $x_1>x_0>\beta$ such that
$f_{-\beta}(x_0)=x_1$ and $x_1-\beta> 2(x_0-\beta)$.
Choosing $x_0$ close enough to $\beta$, we have
$$R:=f_0(x_0)=x_1+\beta<b.$$

For a bounded open interval $J\subset \R$, let $D_*(J)$ denote the Euclidean disk with $J$ as a diameter.
This set corresponds to the set of points for which the distance to $J$ w.r.t.
the Poincar\'e metric on $\C_J=\C\setminus (\R\setminus J)$ is at most some $k_0>0$.
Also, let $B^*(x,R)=B(x,R)\setminus \{x\}$ where $B(x,R)$ is the open disc with radius $R$ and centre at $x$.

\begin{lemma}\label{lem:flatext} The map $f_0: (-x_0,0)\cup (0,x_0)\to (0, R)$ extends to an unbranched holomorphic covering map $F_0:U\to B^*(0,R)$, where
$U\subset D_*((-x_0,0))\cup D_*((0,x_0))$.
In particular, $\diam (U)=2x_0 <R$.
\end{lemma}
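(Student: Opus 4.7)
The plan is to construct $U$ as a disjoint union $U^+\sqcup U^-$ with $U^-=-U^+$, where $U^+\subset D_*((0,x_0))$ and $U^-\subset D_*((-x_0,0))$, and each component is mapped by $F_0$ as a universal cover onto $B^*(0,R)$. On $(0,x_0)$ one has $f_0(x)=be^{-x^{-\ell}}$, which extends to the holomorphic function $F_0(z):=be^{-z^{-\ell}}$ on the simply connected set $D_*((0,x_0))$ using the principal branch of $z^{-\ell}$ (well defined there since $0\notin D_*((0,x_0))$). Writing $F_0=\exp\circ\widetilde F_0$ with $\widetilde F_0(z)=\log b-z^{-\ell}$, and recalling that $\exp$ is the universal covering $\{\Re v<\log R\}\to B^*(0,R)$ (note $\log R=\log b-x_0^{-\ell}$), it suffices to exhibit a simply connected $U^+\subset D_*((0,x_0))$ on which $z\mapsto z^{-\ell}$ is a biholomorphism onto the half-plane $\{\Re u>x_0^{-\ell}\}$.

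The natural candidate is $U^+:=\{u^{-1/\ell}:\Re u>x_0^{-\ell}\}$, using the principal branch of $u^{-1/\ell}$. This map is holomorphic and injective on a half-plane, so $U^+$ is open and simply connected, contains $(0,x_0)$ (take $u=x^{-\ell}$), and the inverse is exactly $z\mapsto z^{-\ell}$. Thus $F_0$ restricts to a holomorphic universal covering $U^+\to B^*(0,R)$, and in particular is unbranched (no zeros of $F_0'$ on $U^+$).

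The main point to verify is $U^+\subset D_*((0,x_0))$. Writing $u=\rho e^{i\tau}$ with $\rho\cos\tau>x_0^{-\ell}$ and $|\tau|<\pi/2$, we have $z=u^{-1/\ell}=\rho^{-1/\ell}e^{-i\tau/\ell}$, so $r:=|z|=\rho^{-1/\ell}$ and $\theta:=\arg z=-\tau/\ell$ satisfies $|\theta|<\pi/(2\ell)$ and $r<x_0(\cos(\ell\theta))^{1/\ell}$. Since $D_*((0,x_0))=\{re^{i\theta}:r<x_0\cos\theta,\ |\theta|<\pi/2\}$, the containment reduces to the inequality
$$\cos(\ell\theta)\le\cos^\ell\theta,\qquad \theta\in(0,\pi/(2\ell)).$$
This follows from Jensen's inequality applied to the concave function $\varphi(\theta)=\log\cos\theta$ on $(-\pi/2,\pi/2)$: since $\varphi(0)=0$ and $\theta=\tfrac{1}{\ell}\cdot\ell\theta+\tfrac{\ell-1}{\ell}\cdot 0$, concavity gives $\varphi(\theta)\ge\tfrac{1}{\ell}\varphi(\ell\theta)$, i.e.\ $\cos^\ell\theta\ge\cos(\ell\theta)$. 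This concavity step is the only geometric input of the proof.

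The mirror construction uses $F_0(z):=be^{-(-z)^{-\ell}}$ on $U^-:=-U^+\subset D_*((-x_0,0))$ (principal branch applied to $-z$); it agrees with $f_0$ on $(-x_0,0)$ and the same argument gives a universal covering onto $B^*(0,R)$. Since $U^\pm$ lie in opposite open half planes, they are disjoint, so $F_0:U\to B^*(0,R)$ is an unbranched covering extending $f_0$. Finally $U^\pm\subset B(0,x_0)$ yields $\diam(U)\le 2x_0$, while $U$ contains $(-x_0,0)\cup(0,x_0)$ forces equality; and the choice $x_1-\beta>2(x_0-\beta)$ gives $R=x_1+\beta>2x_0$, so $\diam(U)=2x_0<R$.
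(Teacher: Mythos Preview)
Your proof is correct and follows the same overall construction as the paper: the same domain $U^+$ (the paper writes it as $\Phi^{-1}(D_*((0,x_0^\ell)))$ with $\Phi(z)=z^\ell$, which coincides with your set after noting that $w\mapsto 1/w$ sends $D_*((0,x_0^\ell))$ to the half-plane $\{\Re u>x_0^{-\ell}\}$), the same map $F_0$, and the same mirror $U^-=-U^+$. The one genuine difference is in establishing the inclusion $U^+\subset D_*((0,x_0))$: the paper observes that $\Phi^{-1}:\C_{(0,x_0^\ell)}\to\C_{(0,x_0)}$ is holomorphic and hence contracts the Poincar\'e metric (Schwarz lemma), so the Poincar\'e disc $D_*((0,x_0^\ell))$ is sent into the Poincar\'e disc $D_*((0,x_0))$; you instead compute directly in polar coordinates and reduce to the elementary inequality $\cos(\ell\theta)\le\cos^\ell\theta$, proved via concavity of $\log\cos$. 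Your route is more self-contained and avoids any hyperbolic geometry; the paper's is more conceptual and makes the inclusion immediate without calculation.
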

\begin{proof} Let $\Phi(re^{i\theta})= r^\ell e^{i\ell \theta}$ denote the conformal map from the sector $\{re^{i\theta}: |\theta|<\pi/(2\ell)\}$ onto the right half plane, let $U^+=\Phi^{-1}(D_*((0, x_0^\ell)))$.
Since $\Phi^{-1}\colon \C_{(0,x_0^{\ell})}\to \C_{(0,x_0)}$ is holomorphic, by the Schwarz Lemma $\Phi^{-1}$ contracts
the Poincar\'e metrics on these sets,
and therefore $U^+\subset D_*((0,x_0))$.  Define $U^-=\{-z: z\in U^+\}$, $U=U^+\cup U^-$ and
$$F_0(z)=\left\{\begin{array}{ll}
b e^{-1/\Phi(z)} & \mbox{ if } z\in U^+\\
be^{-1/\Phi(-z)} & \mbox{ if } z\in U^-.
\end{array}
\right.
$$
It is straightforward to check that $F_0$ maps $U^+$ (resp. $U_-$) onto $B^*(0,R)$ as an un-branched covering.
\end{proof}

\begin{coro}\label{coro83}
$F_0\in \mathcal F$. Moreover, if  $c\in U$ then  $F_c=F_0+c$  satisfies the  robust separation property in Theorem~\ref{single}.  
In particular, the kneading sequence of $F_c$ depends monotonically on 
 $c\in [-\beta, \infty)$.
\end{coro}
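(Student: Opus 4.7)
The plan is to verify the three assertions in sequence. For $F_0\in\mathcal F$, I would set $V:=B(0,R)$. Condition (c) in the definition of $\mathcal F$ is exactly the content of Lemma~\ref{lem:flatext} (note that $0\notin U$, so $U\setminus\{0\}=U$ and $V\setminus\{0\}=B^*(0,R)$); conditions (a) and (b) are immediate, since $U$ is bounded and has $0$ on the boundary of each of the disks $D_*((-x_0,0))$ and $D_*((0,x_0))$. Only (d) requires a short computation: using the inclusion $U\subset B(0,x_0)$ from Lemma~\ref{lem:flatext}, one has $\diam U\le 2x_0$, while $R=x_1+\beta>2x_0$ follows directly from the choice $x_1-\beta>2(x_0-\beta)$ made at the start of Section~8.2. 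These inequalities together yield $V\supset B(0,\diam U)\supset U$.

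Next I would check the robust separation of Theorem~\ref{single} for $F_c$ with $c\in U$. Viewed as a marked map, $F_c:U\to B(c,R)\setminus\{c\}$ has $c_1=F_c(0)=c$, so taking $U_g=U$ and $V_g=B(c,R)$ the outer inclusion $V_g\supset\overline{B(c_1;\diam U_g)}$ reduces again to $R>2x_0$, which was just verified. The inner inclusion $\overline{B(c;\diam U_g)}\supset U_g$ follows from the triangle inequality: every $z\in U$ satisfies $|z-c|\le|z|+|c|<x_0+x_0=2x_0=\diam U_g$, using $c\in U\subset B(0,x_0)$.

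With $F_0\in\mathcal F$ and the robust separation in place, Theorem~\ref{single} will give $\det(I-\rho\mathcal A)>0$ for every $|\rho|\le 1$ at each real critically periodic parameter $c_*\in U\cap\R$, and the monotonicity corollary of Section~2 will then upgrade this to monotone dependence of the kneading sequence $\mathcal K(F_c)$ on $c\in U\cap\R$. Since $x_0>\beta$, this already covers the interval $[-\beta,0)\cup(0,x_0)$ in a single complex neighbourhood, so the interesting left half of the claim is handled.

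The one genuine obstacle is the extension to the full ray $[-\beta,\infty)$, which cannot lie in any fixed bounded set $U\cap\R$. I would dispose of this tail by a direct real argument: for every $c>0$ and every $x$ in the real domain of $F_c$ one has $F_c(x)=b e^{-1/|x|^\ell}+c>c>0$, so the critical orbit lies entirely in the monotone increasing branch $(0,\infty)$, forcing $\mathcal K(F_c)$ to equal the maximal kneading word $\overline{+1}$ for every $c>0$. Combining the monotone increase established on $c\in[-\beta,0]$ with the constant maximal value on the tail $c>0$, and noting that $\mathcal K(F_0)=\overline{0}$ sits in the correct Milnor--Thurston order between the two regimes, yields monotonicity of $c\mapsto\mathcal K(F_c)$ throughout $[-\beta,\infty)$.
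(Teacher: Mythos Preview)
Your argument is correct and follows the same route as the paper for the first two assertions: verify that $F_0\colon U\to B^*(0,R)$ satisfies the axioms of $\mathcal F$ and that the inequality $\diam(U)<R$ yields the robust separation $V_g\supset\overline{B(c;\diam U_g)}\supset U_g$. The paper's own proof is extremely terse (essentially one line invoking $\diam(U)<R$), whereas you spell out the inner inclusion via the triangle inequality; both are valid. One minor inconsistency: you first write $\diam U\le 2x_0$ but later use $\diam U_g=2x_0$. Simply cite Lemma~\ref{lem:flatext}, which already states $\diam(U)=2x_0$, and the issue disappears.

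Your treatment of the monotonicity on the full ray $[-\beta,\infty)$ is a genuine addition: the paper's proof says nothing about this and implicitly leaves it to the reader. You correctly observe that Corollary~2.8 only delivers monotonicity on $U\cap\R=(-x_0,0)\cup(0,x_0)$, and you dispose of the tail $c\ge 0$ by the elementary real observation that $F_c(x)>c>0$ forces the kneading sequence to be the maximal word $\overline{+1}$ for every $c>0$ (and $\overline{0}$ at $c=0$), which sits above every sequence with $i_1=-1$ in the Milnor--Thurston order. This neatly bridges the gap at $c=0$ and extends past $x_0$; it is the natural completion of the paper's argument.
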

\begin{proof} Take $U$ as in the previous lemma and take $V=B(c,R)$.
Then $F_c\colon  U \to V\setminus \{c\}$   is an  unbranched covering, and since $\diam(U)<R$
 the robust separation property in Theorem~\ref{single} is satisfied.
\end{proof}


\begin{figure}[h!]
\centering
\includegraphics[width=0.9\textwidth]{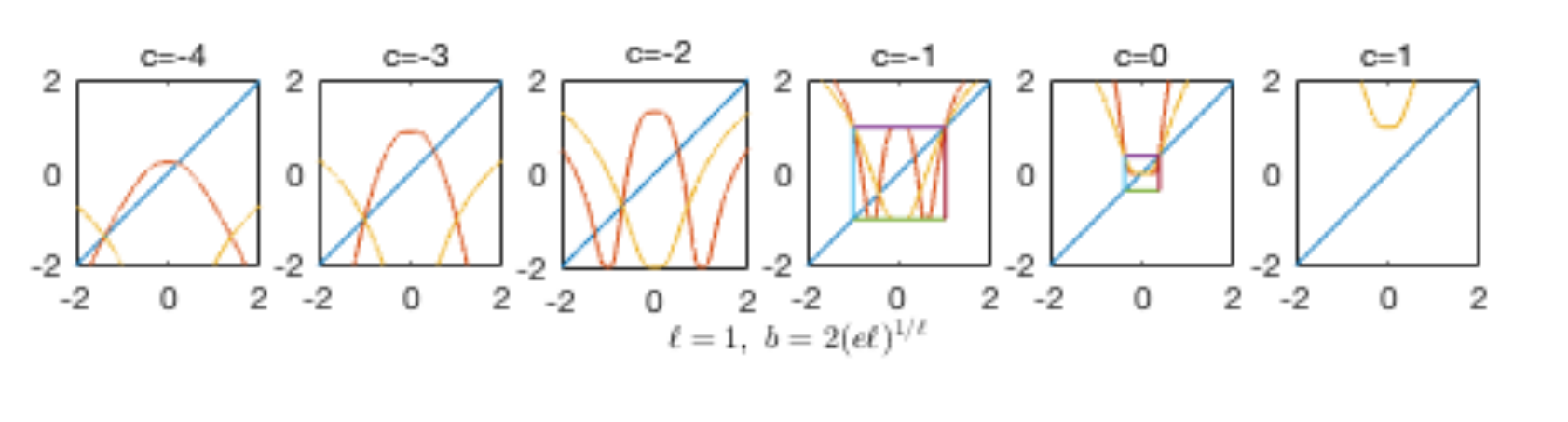}\label{fig1}
\setlength{\abovecaptionskip}{-10pt}
\caption{\small The graphs of $f_c\colon [-2,2]\to [-2,2]$ (in orange) and $f^2_c\colon [-2,2]\to [-2,2]$ (in red) 
for various choices of $c$ when  $b= 2(e\ell)^{1/\ell}$. For $b< 2(e\ell)^{1/\ell}$
there exists no Chebychev parameter $c$, and $b>2(e\ell)^{1/\ell}$ there exists two such
parameters.}
\end{figure}

\begin{figure}[h!]
\centering
\includegraphics[width=0.9\textwidth,height=7cm]{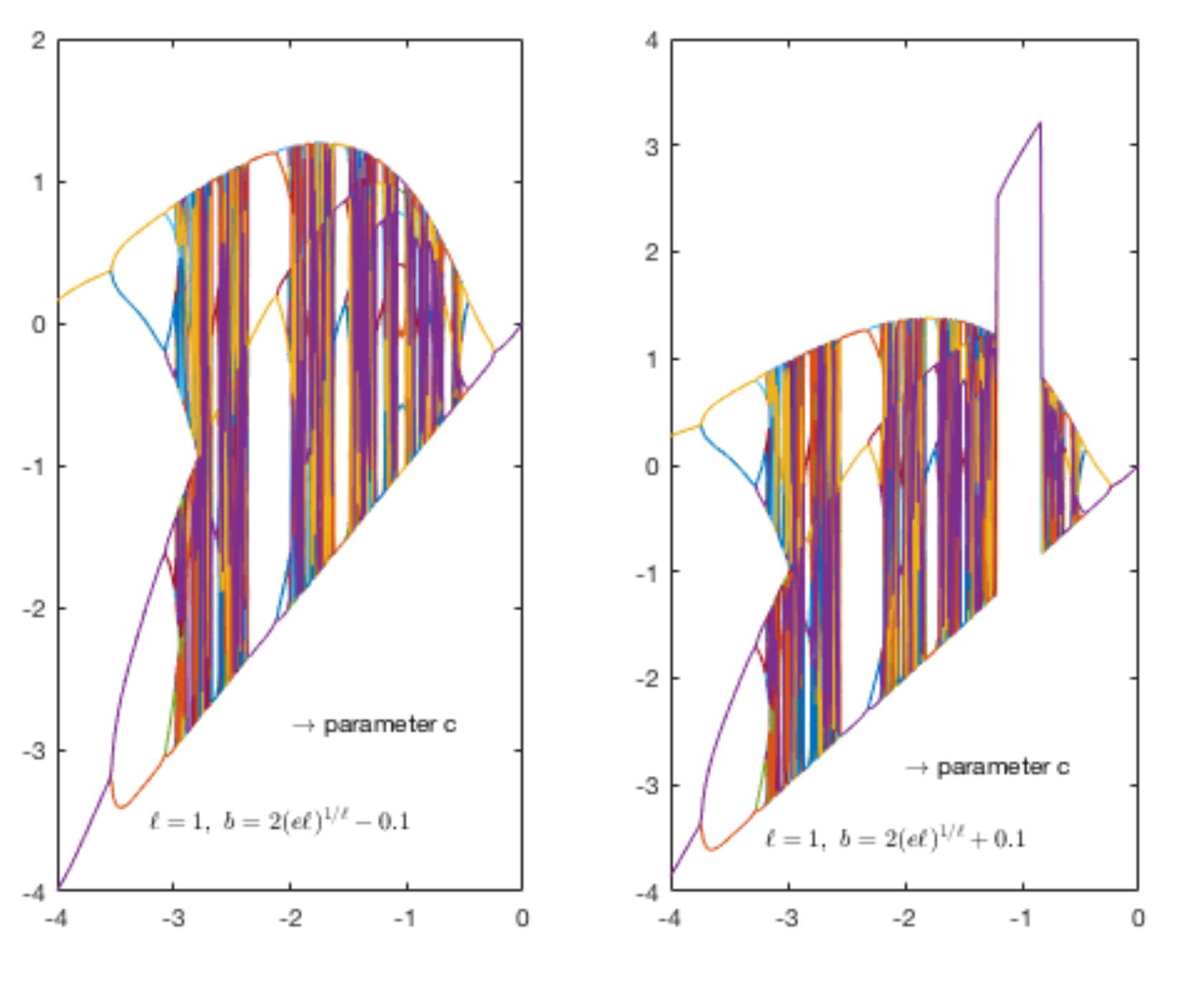}\label{fig2}
\setlength{\abovecaptionskip}{-2pt}
\caption{\small  A bifurcation diagram. Here for $c\in [-4,0]$ (drawn in the horizontal axis),
the last 100 iterates from the set $\{f^k(0)\}_{k=0}^{1000}$ are drawn (in the vertical direction).
Notice that for $b= 2(e\ell)^{1/\ell}+0.1$ the interval $[-\beta_c,\beta_c]$,
where $\beta_c>0$ is
the repelling fixed point of $f_c$ (when it exists), is not invariant for parameters $c\approx -1$
and for these parameter  almost every point is the basin of an attracting fixed point in $\R_+$.
The entropy decreases for $c\in [-\beta,0]$ where for $b= 2(e\ell)^{1/\ell}+0.1$, we have $\beta\approx  -0.831$.}
\end{figure}

\begin{remark}\label{rem:flat} When $b< 2(e\ell)^{1/\ell}$, there no longer exists $c$ so that $f_{c}$ is a Chebychev map,
i.e. so that $f_{c}^2(0)=\beta_c$ where $\beta_c>0$ is a fixed point. Indeed, otherwise
$f_{c}^2(0)=\beta_c$ and therefore since $\beta_c$ is a fixed point and $f_c$ is symmetric,
 $c=f_c(0)=-\beta_c=-f_c(\beta_c)=-f_{-\beta_c}(\beta_c)$ which implies that $\beta_c$ is a solution  of
 $b= 2x e^{1/x^\ell}$.   As we have shown above, this equation has
no solution when  $b< 2(e\ell)^{1/\ell}$. Also note that when $b=2(e\ell)^{1/\ell}$ then  $c=-\beta=-\ell^{1/\ell}$
and
$\frac{d\beta_c}{dc}=1/(1-Df_c(\beta_c))=1/(1-2)=-1$, and therefore the positive oriented transversality of $f_c$ fails at $c=-\beta$.
\end{remark}

\subsection{Families of the form $f_a(x)=af(x)$}\label{subsec:multiplicative}

There are quite a few papers which ask the question:
\begin{quote}For which interval maps $f$, has one monotonicity of the entropy
for the family  $x\mapsto f_a(x)$, $a\in \R$?
\end{quote}
This question is subtle, as the
counter examples to various conjectures show, see \cite{NY, Ko, Br, Zd}.
In this section we will obtain monotonicity and transversality for such families provided
$f$ is contained in the large classes $\mathcal E,\mathcal E_o$  defined in Subsection~\ref{subsec:defclassesEF}. 
For convenience, let us  recapitulate the definitions of the spaces  $\mathcal{E}$ and $\mathcal{E}_o$.
Consider holomorphic maps $f: D\to V$ such that:
\begin{enumerate}
\item[(a)] $D,V$ are open sets which are symmetric w.r.t. the real line so that
$f(D)=V$  
\item[(b)] Let $I=D\cap \R$ then there exists $c>0$ so that $I\cup \{c\}$ is a (finite or infinite) open interval and $0\in \overline I$, $c\in \mbox{int}(\overline{I})$.
Moreover,   $f$ extends continuously to $\overline{I}$,  $f(I)\subset   \R$ and $\lim_{z\in D, z\to 0} f(z)=0$.
\item[(c)]   Let $D_+$ be the component of $D$ which contains $I\cap (c,\infty)$, where $D_+$ might be equal to $D$.
Then $u\in D\setminus \{0\}$ and $v\in D_+\setminus \{0\}$, $v\ne u$,  implies  $u/v\in V$.
\end{enumerate}
\noindent
Let $\mathcal{E}$ be the class of maps which satisfy $(a)$,$(b)$,$(c)$ and assumption $(d)$:
\begin{enumerate}
\item[(d)]   $f\colon D\to V$ has no singular values in $V\setminus \{0,1\}$
and $c>0$ is minimal such that $f$ has a positive local maximum at $c$ and $f(c)=1$.
\end{enumerate}
\noindent
Similarly let  $\mathcal{E}_o$ be the class of maps which satisfy $(a)$,$(b)$,$(c)$ and assumption $(e)$:
\begin{enumerate}
\item[(e)] $f$ is odd,   $f\colon D\to V$ has no singular values in $V\setminus \{0,\pm 1\}$
  and $c>0$ is minimal such that $f$ has a positive local maximum at $c$ and $f(c)=1$.
\end{enumerate}
Here, as usual, we say that $v\in \C$ is a {\em singular value} of a holomorphic map $f: D\to \C$ if it is a critical value,
or an asymptotic value where the latter means the existence of a path $\gamma: [0,1)\to D$ so that $\gamma(t)\to \partial D$ and $f(\gamma(t))\to v$ as $t\uparrow 1¡$. Note that we do not require here that $V\supset D$.

Using qs-rigidity, it was already shown in \cite{RS} that the topological entropy of $\R\ni x \mapsto af(x)$
is monotone $a$, where $f(x)=\sin(x)$ or more generally $f$ is real, unimodal and entire on the complex plane and satisfies a certain sector condition.
Here we strengthen and generalise this result as follows:

\begin{theorem}\label{thm:classE}
Let $f$ be either in $\mathcal{E}$ or in $\mathcal{E}_o$. Assume that the local maximum $c>0$ is periodic for $f_a(x)=af(x)$ where $0<a<b$.
Then the following `positive-oriented' transversality property holds:
\begin{equation} \frac{\frac{d}{d \lambda} f_\lambda^q(c)\left.\right|_{\lambda=a}}{Df_{a}^{q-1}(f_a(c))}>0.\end{equation}
(A similar statement holds when $c$ is pre-periodic for $f_a$.)
In particular, the kneading sequence of the  family $f_a(x): J\to \R$ is
monotone increasing.
\end{theorem}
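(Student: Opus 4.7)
The plan is to apply the real version of the Main Theorem to the one-parameter real holomorphic deformation $(g,G,\mathbf p)_W$ specified just before the statement: $\nu=1$, $P_0=\{c\}$, $P$ is the (pre)periodic orbit $\{c_0=c,c_1=a,\ldots,c_{q-1}\}$ of $c$ under $g$, $W=D_+$, $G_w(z)=wf(z)$ and $\mathbf p(w)\equiv c$. Because $\nu=1$, the manifold alternative in Theorem~\ref{thm:1eigen} is trivially ruled out: it would require $\mathcal R\equiv 0$ on a neighbourhood of $a$ in $\C$, whereas by (\ref{R1-r'}) and (\ref{Rr-nu'}) the function $\mathcal R(w)$ equals the non-constant real-analytic function $G_w^{q-1}(w)-c$ (periodic case) or $G_w^{q-1}(w)-G_w^{l-1}(w)$ (preperiodic case). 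So everything reduces to verifying the real lifting property; once that is done, Proposition~\ref{prop:lift2spectrum} and Corollary~\ref{real} together give
\[
  Q\;=\;\frac{\det D\mathcal R(c_1)}{Dg^{q-1}(c_1)}\;=\;\frac{\tfrac{d}{d\lambda}f_\lambda^q(c)\bigl|_{\lambda=a}}{Df_a^{q-1}(f_a(c))}\;>\;0,
\]
and monotonicity of the kneading sequence on $J$ then follows from $Q>0$ by the standard argument recalled in the Remark following the monotonicity corollary in Section~2.

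For the lifting property I would proceed as follows. A lift $\hat h_\lambda$ of a real holomorphic motion $h_\lambda$ of $g(P)$ must satisfy $\hat h_\lambda(c)=c$ and, since $G_w(z)=wf(z)$ with $w=h_\lambda(c_1)$,
\[
  f\bigl(\hat h_\lambda(x)\bigr)\;=\;\frac{h_\lambda(g(x))}{h_\lambda(c_1)},\qquad x\in g(P)\setminus\{c\}.
\]
Minimality of the (pre)period forces $g(x)\neq c_1$ for every such $x$ (otherwise some $c_{i+1}$ with $1\le i\le q-1$ would coincide with $c_1$), and the hypothesis $c_n\in D$ for $1\le n\le q$ guarantees $g(x)\in D\setminus\{0\}$. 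For $\lambda$ in a small disc $\mathbb D_\varepsilon$ the motion $h_\lambda$ keeps $g(P)$ close to itself, so $h_\lambda(g(x))\in D\setminus\{0\}$, $h_\lambda(c_1)\in D_+\setminus\{0\}$ and $h_\lambda(g(x))\neq h_\lambda(c_1)$. Axiom~(c) of the classes $\mathcal E,\mathcal E_o$ then places the ratio in $V$; at $\lambda=0$ the ratio equals $f(x)\in V\setminus\mathrm{SV}$, where $\mathrm{SV}\subset\{0,1\}$ for $f\in\mathcal E$ and $\mathrm{SV}\subset\{0,\pm 1\}$ for $f\in\mathcal E_o$. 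By axiom~(d) respectively (e), $f:D\setminus f^{-1}(\mathrm{SV})\to V\setminus\mathrm{SV}$ is an unbranched covering, so the lift $\hat h_\lambda(x)\in D$ is defined on $\mathbb D_\varepsilon$ by monodromy starting from $\hat h_0(x)=x$, is real-symmetric, and forms a holomorphic motion of $g(P)$.

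The main obstacle is the uniform boundedness of the iterated lifts in $k$, rather than the existence of a single lift. Constructing $h_\lambda^{(k+1)}$ from $h_\lambda^{(k)}$ is straightforward from axiom~(c) and the covering property of $f$, but one must arrange for all of the lifts to live in a single compact subset of $D$ so that one $\varepsilon$ works uniformly in $k$. This is where the normalisation $\mathbf p(w)\equiv c$ and the choice $W=D_+$ pay off: for $x\neq c$ the lift $\hat h_\lambda(x)$ is pinned to the local inverse branch of $f$ at $x$ and therefore stays near $x$, while $\hat h_\lambda(c)=c$ is fixed. After one initial shrinking of $\varepsilon$ no further shrinking is required, giving the uniform bound $|h_\lambda^{(k)}(x)|\le M$ from the definition of the real lifting property; plugging this into the framework above produces the positive transversality inequality and hence the claimed monotonicity. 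The preperiodic case is entirely analogous, using (\ref{Rr-nu'}) in place of (\ref{R1-r'}).
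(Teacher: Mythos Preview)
Your overall plan is correct and matches the paper's approach: set up the one-parameter deformation $G_w(z)=wf(z)$, $\mathbf p\equiv c$, verify the real lifting property, and conclude via the Main Theorem and Corollary~\ref{real}. The reduction ruling out the positive-dimensional alternative is also fine. The gap is in the heart of the argument, namely the \emph{uniform} boundedness of the iterated lifts.

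Your justification ``the lift $\hat h_\lambda(x)$ is pinned to the local inverse branch of $f$ at $x$ and therefore stays near $x$, \ldots\ after one initial shrinking of $\varepsilon$ no further shrinking is required'' is a non-argument. Each lift is defined via a branch of $f^{-1}$ applied to the ratio $h_\lambda^{(k)}(g(x))/h_\lambda^{(k)}(c_1)$; nothing you wrote prevents these ratios from wandering out of the region where a fixed local branch at $x$ is defined, so the $\varepsilon$ could a priori shrink at every step. Since $D$ may be unbounded (e.g.\ $f=\sin$, $D=\C$), ``the lift lands in $D$'' is not a bound either. What the paper does instead is identify an \emph{invariant class} $\mathcal M_\Delta$ of motions---those with $h_\lambda(x)\in D\setminus\{0\}$ for $x\neq c$ and $h_\lambda(c)=c$---and observe that the lift of any motion in $\mathcal M_\Delta$ is again in $\mathcal M_\Delta$ (because the ratio lies in $V\setminus\{0,1\}$ by axiom~(c), so the $f$-preimage lies in $D\setminus f^{-1}\{0,1\}$). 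Consequently every $h_\lambda^{(k)}(x)$ with $x\neq c$ omits the two values $0$ and $c$, and Montel's theorem then gives normality of the family $\{\lambda\mapsto h_\lambda^{(k)}(x)\}_k$; since $h_0^{(k)}(x)=x$ for all $k$, one obtains the required uniform bound on a slightly smaller disk. You should replace your ``stays near $x$'' paragraph with this Montel argument.

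There is a second, smaller gap in the $\mathcal E_o$ case. You need the ratio $h_\lambda(g(x))/h_\lambda(c_1)$ to avoid $-1$ as well as $0,1$, i.e.\ $h_\lambda(g(x))\neq -h_\lambda(c_1)$, and this is not automatic from the holomorphic-motion injectivity. The paper handles it by enriching the invariant class to $\mathcal M_\Delta^o$, adding the condition $h_\lambda(x)\neq -h_\lambda(y)$ for all $x\neq y$ in $g(P)$; oddness of $f$ then shows this condition is preserved under lifting, and the rest goes through as before.
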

\begin{proof}
Let $f\in \mathcal{E}\cup \mathcal{E}_o$. Denote $g(x)=af(x)$ and let $V_a=a\cdot V$.
Let $P_0=\{c\}$, $P=\{c_i=g^i(c): i\ge 0\}$. Since $0<a<b$, $f_a$ maps $(0,b)$ into itself, and so $P\subset (0,b)$.
 We may also assume that $g(c)>c$ because otherwise $q=1$ and the result is again trivial.
 By the assumptions, $g$ is a holomorphic map $g: D\to V_a$, $g(P)\subset P$ and $Dg(x)\not=0$ for any $x\in P\setminus P_0$. In particular, $g$ is a real marked map.
For each $w\in W:=\C_*=\C\setminus \{0\}$, $G_w(z):=wf(z)$ is a branched covering from $U:=D\setminus \{0\}$ into $\C$. Define $p(w)\equiv c$. Then $(g, G, p)_W$ is  a local holomorphic deformation of $g$. It suffices to prove that $(g, G, p)_W$ has the lifting property so that the Main Theorem
applies. Indeed, if $1$ is an eigenvalue of $\mathcal A$ then by the Main Theorem
$\{\mathcal{R}(w)=0\}$ is an open set and therefore this critical relation
holds for all parameters,  which clearly is not possible.

Let us first consider the case $f\in \mathcal{E}$. In this case, $w$ is the only critical or singular value of $G_w$. Given a simply connected domain $\Delta\ni 0$ in $\C$, let $\mathcal{M}_\Delta$ denote the collection of all holomorphic motions $h_\lambda$ of $g(P)$ over $(\Delta,0)$ with the following property that for all $\lambda\in \Delta$ we have $h_\lambda(x)\in U$ for all $x\in g(P)\setminus \{c\}$ and $h_\lambda(c)=c$. Given such a holomorphic motion, for each $x\in g(P)$ there is a holomorphic map $\lambda\mapsto \widehat{h}_\lambda(x)$, $\lambda\in\Delta$, with $\widehat{h}_0(x)=x$ and such that
$f(\widehat{h}_\lambda(x))=h_\lambda(g(x))/h_\lambda(g(c))$. Indeed, for $x=c$, take $\widehat{h}_\lambda(x)\equiv c$ and for $x\in g(P)\setminus\{c\}$, we have by  property (c) that $h_\lambda(g(x))/h_\lambda(g(c))\in V\setminus \{0,1\}$. Note that we use here that $g(c)\in D_+$ since $c<g(c)<b$. So the existence of $\widehat{h}_\lambda$ follows from the fact that
$f\colon D\setminus f^{-1}\{0,1\} \to V\setminus \{0,1\}$ is an unbranched covering.  Clearly, $\widehat{h}_\lambda$ is a holomorphic motion in $\mathcal{M}_\Delta$ and it is a lift of $h_\lambda$ over $\Delta$.
It follows that $(g, G, p)_W$ has the lifting property. Indeed, if $h_\lambda$ is a holomorphic motion of $g(P)$ over $(\Lambda, 0)$ for some domain $\Lambda\ni 0$ in $\C$, then we can take a small disk $\Delta\ni 0$ such that the restriction of $h_\lambda$ on $(\Delta, 0)$ is in the class $\mathcal{M}_\Delta$. Therefore, there exists a sequence of holomorphic motions $h_\lambda^{(k)}$ of $g(P)$ over $(\Delta,0)$ such that $h_\lambda^{(0)}=h_\lambda$ and $h_\lambda^{(k+1)}$ is a lift of $h_\lambda^{(k)}$ over $\Delta$ for each $k\ge 0$. If $x=c$ then $h_\lambda^{(k)}(x)\equiv c$ for each $k\ge 1$ while if $x\in g(P)\setminus \{c\}$, $h_\lambda^{(k)}(x)$ avoids values $0$ and $c$. Restricting to a small disk, we conclude by Montel's theorem that $\lambda \mapsto h_\lambda^{(k)}(x)$ is bounded.

The case $f\in\mathcal{E}^o$ is similar. In this case, $G_w$ has two critical or singular values $w$ and $-w$, but it has additional symmetry being an odd function. Given a simply connected domain $\Delta\ni 0$ in $\C$, let $\mathcal{M}_\Delta^o$ denote the collection of all holomorphic motions $h_\lambda$ of $g(P)$ over $(\Delta,0)$ with the following properties: for each $\lambda\in \Delta$,
\begin{itemize}
\item $h_\lambda(x)\in U$ for all $x\in g(P)\setminus \{c\}$ and $h_\lambda(c)=c$;
\item $h_\lambda(x)\not=-h_\lambda(y)$ for $x, y\in g(P)$ and $x\not=y$.
\end{itemize}
Then similar as above, we show that each $h_\lambda$ in $\mathcal{M}_\Delta^o$ has a lift which is again in the class $\mathcal{M}_\Delta^o$. It follows that $(g, G, p)_W$ has the lifting property.
\end{proof}

Let us now show that this theorem applies to a unimodal family with a flat critical point:

\begin{lemma} \label{lemflat} The unimodal map $f\colon [0,1]\to [0,1]$ defined by
\begin{equation}
f(x)=\exp(2^\ell) \left( -\exp(-1/|x-1/2|^\ell) + \exp(-2^\ell) \right)\label{mflat}
\end{equation}
has a holomorphic unbranched extension  $f\colon D_-\cup D_+ \to B^*(1,1)$.  Here  $D_-,D_+$ are domains with $D_-\subset B_*(0,1/2)$, $D_+\subset D_*(1/2,1)$ and $B^*(x,r)$ is the ball with radius $r$ centered and punctured at $x$.
There exists $\ell_0>0$ so that for all $\ell>\ell_0$, one can
slightly enlarge $V$, $D_-,D_+$ and obtain a map in $\mathcal{E}$.
\end{lemma}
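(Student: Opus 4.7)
My plan follows the template of Lemma~\ref{lem:flatext} (the analogous statement for the additive family): use the principal branch of $u^\ell$ on a narrow sector at the critical point $c=1/2$, pull back an explicit round disc, and check that the resulting map is an unbranched covering onto a punctured disc. Concretely, on the sector $S=\{u\in\C\setminus\{0\}:|\arg u|<\pi/(2\ell)\}$ the map $\Phi(u)=u^\ell$ (principal branch) is conformal onto the right half-plane. Writing $u=x-1/2$, the extension of $f$ becomes
$$F_+(x)=1-\exp\!\left(2^\ell-\frac{1}{(x-1/2)^\ell}\right)\quad\mbox{for }x-1/2\in S,$$
with $F_-(x)=F_+(1-x)$ for $1/2-x\in S$; both agree with $f$ on the real axis by the branch choice. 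The identity $|F_+(x)-1|=\exp(2^\ell-\operatorname{Re}(1/(x-1/2)^\ell))$ shows that $|F_+(x)-1|<1$ is equivalent to $\operatorname{Re}(1/(x-1/2)^\ell)>2^\ell$, which in turn cuts out the Euclidean disc $D_*((0,2^{-\ell}))$ in the $\Phi$-plane. I therefore define $\tilde D_+=\Phi^{-1}(D_*((0,2^{-\ell})))$ and set $D_+=1/2+\tilde D_+$, $D_-=1/2-\tilde D_+$.

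The key structural observation for the first assertion is that on $D_+$ the map $x\mapsto 2^\ell-1/(x-1/2)^\ell$ is by construction a conformal bijection onto the left half-plane, and $\xi\mapsto 1-e^\xi$ is the standard universal covering of the left half-plane onto $B^*(1,1)$; composing shows that $F_+\colon D_+\to B^*(1,1)$ is an unbranched covering, and similarly for $F_-$. To obtain the containments $D_+\subset D_*((1/2,1))$ and $D_-\subset D_*((0,1/2))$ it suffices to prove that if $u=re^{i\theta}\in\tilde D_+$, i.e.\ $r^{-\ell}\cos(\ell\theta)>2^\ell$, then $\operatorname{Re}(1/u)=r^{-1}\cos\theta>2$. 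This reduces to the inequality $\cos\theta>(\cos(\ell\theta))^{1/\ell}$ on $(0,\pi/(2\ell))$, which follows because $h(\theta):=\ell\log\cos\theta-\log\cos(\ell\theta)$ satisfies $h(0)=0$ and $h'(\theta)=\ell(\tan(\ell\theta)-\tan\theta)>0$ on that interval.

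For the second assertion I enlarge the target disc to $D_*((-\eta,2^{-\ell}+\eta))$ for a small fixed $\eta>0$, which correspondingly enlarges $\tilde D_+$, $D_\pm$, and the range to $V_\eta=\{w:0<|w-1|<e^\eta\}$, while preserving the factorization above. Conditions (a), (b), (d) of class $\mathcal{E}$ are then straightforward: the construction is symmetric with respect to the real axis, $F_\pm(x)\to 0$ as $x\to 0$ or $x\to 1$ since $(x-1/2)^\ell\to 2^{-\ell}$ there, and the only singular values of the covering are $0$ (asymptotic, along the components of $\partial D$ meeting $\{0,1\}$) and $1$ (attained only as $x\to 1/2$). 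The main obstacle is condition (c): $u/v\in V_\eta$ whenever $u\in D\setminus\{0\}$, $v\in D_+\setminus\{0\}$, $u\ne v$. Applying the argument of the previous paragraph to the enlarged disc shows that $\tilde D_+$ lies in a horizontal strip about $(0,1/2)$ of height $O(1/\ell)$, so for $\ell$ large the domains $D_\pm$ hug the real axis tightly. Writing $u=a+i\delta_u$ and $v=b+i\delta_v$, the desired inequality $|u/v-1|<e^\eta$ reduces to
$$(a-b)^2+(\delta_u-\delta_v)^2<e^{2\eta}(b^2+\delta_v^2),$$
which is verified by splitting into the two cases $u,v\in D_+$ (easy, since $\operatorname{Re}(v)>1/2$ dominates $|u-v|$) and $u\in D_-$, $v\in D_+$ (the delicate case, where the bound on $|\operatorname{Im}u|$ as a function of the real distance of $u$ from $0$, derived above, is exactly what survives the limit $u\to 0$, $v\to 1/2$). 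This last estimate is what fixes the threshold $\ell_0$.
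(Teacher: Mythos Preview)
Your proof is correct and follows essentially the same factorisation as the paper: write $f$ as the composition $z\mapsto(z-1/2)^\ell$, then $w\mapsto -1/w$, then shift and exponentiate, identifying the range with a punctured disc. The one substantive difference is the containment $\tilde D_+\subset D_*((0,1/2))$: the paper (implicitly referring back to the argument of Lemma~\ref{lem:flatext}) obtains this from the Schwarz lemma, noting that $\Phi^{-1}$ is a holomorphic map between the slit planes $\C_{(0,2^{-\ell})}\to\C_{(0,1/2)}$ and hence contracts their Poincar\'e metrics, so the Poincar\'e disc $D_*((0,2^{-\ell}))$ is sent into $D_*((0,1/2))$. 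You instead prove the elementary trigonometric inequality $\cos\theta>(\cos\ell\theta)^{1/\ell}$ on $(0,\pi/(2\ell))$. Both arguments are valid; the Poincar\'e metric argument is slicker and extends immediately to other target discs, while your inequality is entirely self-contained.

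For the second assertion the paper is just as brief as you are: it simply notes that $0$ is a repelling fixed point with multiplier $2\ell\cdot 2^\ell>2$ and that $D_\pm$ collapse onto the real intervals as $\ell\to\infty$, and then asserts that a slight enlargement of domain and range yields a map in $\mathcal E$. Your explicit verification of condition~(c) via the estimate on $|\mathrm{Im}\,u|$ is in the same spirit and at a comparable level of detail; neither version spells out the full case analysis.
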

\begin{proof} Let us consider $f$ as a composition of a number of maps.
First  $z\mapsto  (1/2-z)^\ell$ and $z\mapsto (z-1/2)^\ell$ map some domains $U_-,U_+$ which
are contained in $D_*(0,1/2)$ and $D_*(1/2,1)$ onto  $D_*(0,1/2^\ell)$ when $\ell\ge 1$.
Next $z\mapsto -1/z$ maps $D_*(0,1/2^\ell)$ onto a half-plane $Re(z)\le -2^\ell$.
Then $z\mapsto \exp(z)$ maps this half-plane onto the punctured disc $B^*(0,\exp(-2^\ell))$ centered at $0$ and with
radius $\exp(-2^\ell)$ (and with a puncture at $0$).
Applying the translation $z\mapsto \exp(-2^\ell)$ to this punctured disc we obtain the  punctured disc centered at
$B^*(\exp(-2^\ell),\exp(-2^\ell))$.
Then multiplying this disc by $\exp(2^\ell)$ shows that $f$ maps
$U_-,U_+$ onto $B^*(1,1)$.  (Note that this final punctured disc touches the imaginary axis.)
Since $0$ is a repelling fixed point of $f$ with multiplier $>2$, and $U_-,U_+$ are close to the intervals $(0,1/2)$ and $(1/2,1)$
when $\ell$ is large,  we can enlarge the domain and range, and obtain a map as in (a)-(d).
\end{proof}

\begin{figure}[h!]
\centering
\includegraphics[width=0.9\textwidth,height=6cm]{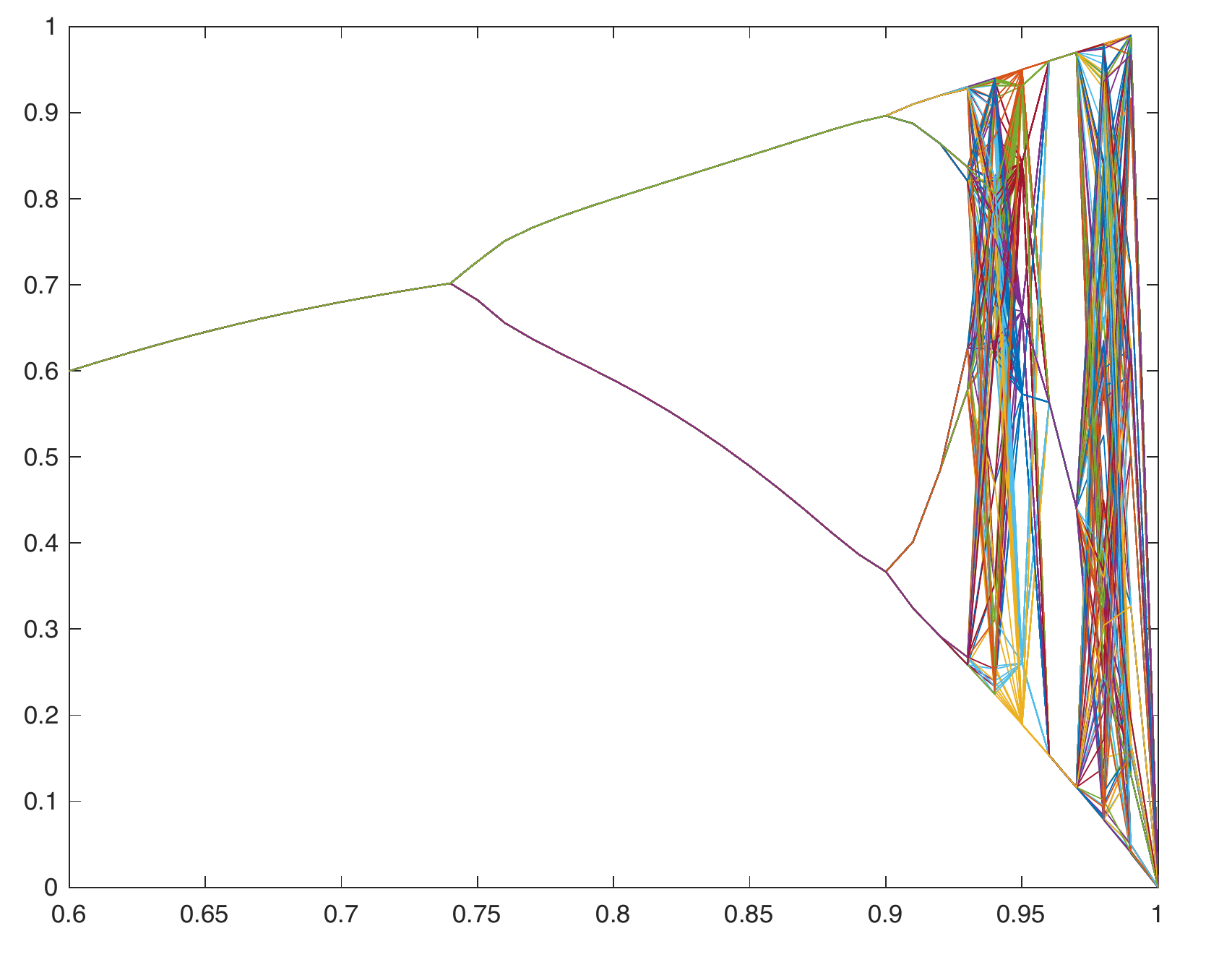}\label{fig3}
\setlength{\abovecaptionskip}{-2pt}
\caption{\small  A bifurcation diagram for the family $f_\lambda(x)=\lambda f(x)$, $\lambda\in [0,1]$
where $f$ is as formula (\ref{mflat}). Note that this unimodal family with a flat critical point is monotone.}
\end{figure}

\section{Application to families with one free critical point}
\label{sec:parameter}

Let us apply the method along a curve in  parameter space corresponding
to where some $\nu$-parameter family of maps $G_w$ has  $\nu-1$ critical relationships.

Let us say that $(G,\textbf{p})_W$ is a {\em partially marked family of maps} if
\begin{enumerate}
\item $W$ is an open connected subset of $\C^\nu$ and $U$ is an open subset of $\C$;
\item $\textbf{p}=(p_1,p_2,\ldots, p_\nu):W\to \C^\nu$ is a holomorphic map and
so that all coordinates of  $\textbf{p}(w)$ are distinct. Let $P_{0,w}=\{p_1(w),\dots,p_\nu(w)\}$.
\item $G: (w,z)\in W\times U \mapsto G_w(z)\in \C$ is a holomorphic map
and $DG_w(z)\ne 0$ for each $w\in W$ and $z\in U$.
\item associated to each $j=1,\dots,\nu-1$ there exists a positive integer $q_j$ so that $G^k_w(w_j)\in U$
for $k=1,\dots,q_j-1$ and $w=(w_1,\cdots,w_{\nu-1},w_\nu)\in W$.
\end{enumerate}

Assume that the family map $G_w$ is real. That is, assume that the properties
defined below Definition~\ref{def:unfoldtrans} hold:
for any $w=(w_1, w_2, \ldots, w_\nu)\in W$, $z\in U$ and $j=1,2,\ldots,\nu$, we have
$\overline{w}= (\overline{w_1}, \overline{w_2},\ldots, \overline{w_\nu})\in W$, $\overline{z}\in U$, and
$$G_{\overline{w}}(\overline{z})=\overline{G_w(z)}, \text{ and } p_j(\overline{w})=\overline{p_j(w)}.$$

Choose for each $j=1,\dots,\nu-1$ either
$\mu(j)\in \{1,2,\ldots,\nu\}$ or  $1\le l_j<q_j$. Given this choice,
let $L$ be the set of $w=(w_1,\cdots,w_\nu)\in W$ for which the following hold:
\begin{enumerate}
\setcounter{enumi}{4}
\item  if $\mu(j)$ is defined then $G_w^{q_j-1}(w_j)=p_{\mu(j)}(w)$ and $G_w^k(w_j)\not\in P_{0,w}$ for each $1\le k<q_j$;
\item if $l_j$ is defined then
$G_w^{q_j-1}(w_j)=G_w^{l_j-1}(w_j)$ and $G_w^k(w_j)\not\in P_{0,w}$ for all $0\le k\le q_j-1$.
\end{enumerate}
Relabelling these points $w_1,\dots,w_{\nu-1}$, we assume that there is $r$ such that
the first alternative happens for all $1\le j\le r$ and the second alternative happens for $r<j\le \nu-1$.

\begin{remark}
So for each $w\in L$, $G_w$ has $\nu-1$ critical relations which start with $p_1(w),\dots,p_{\nu-1}(w)$.
Hence  the terminology of partially marked family of maps.  \end{remark}

Define for $w\in W$, for $1\le j\le r$,
\begin{equation}R_j(w)=G_{w}^{q_j-1}(w_j)-p_{\mu(j)}(w)\label{R1-r}
\end{equation}
and for $r<j\le \nu-1$,
\begin{equation}R_j(w)=G_{w}^{q_j-1}(w_j)-G_{w}^{l_j-1}(w_j),\label{Rr-nu}
\end{equation}
where $w=(w_j)_{j=1}^\nu$.
Then $L$ is precisely the set
$$L=\{w\in W; \; R_1(w)=\dots=R_{\nu-1}(w)=0\}.$$

Let $L_*$ be a maximal connected subset of $L\cap \R^\nu$ such that for each $w\in L_*$, the $\nu\times (\nu-1)$ matrix
\begin{equation}
V_w:=\left[\dfrac{1}{DG_w^{q_1-1}(w_1)}\nabla R_1(w),\dots, \dfrac{1}{DG_w^{q_{\nu-1}-1}(w_{\nu-1})}\nabla R_{\nu-1}(w)\right]\label{eq:maxrank} \end{equation}
has rank $\nu-1$. Here $\nabla R_i(w)$ is the gradient of $R_i$.
By the implicit function theorem, $L_*$ is a real analytic curve.

Now, let us assume that for some ${\bf c}_1=(c_{1,1},\dots,c_{1,\nu}) \in L_*$, $G_{{\bf c}_1}$ has an additional  critical relation starting with $p_\nu(w)$, i.e., $g=G_{{\bf c}_1}$ is a {\em marked map} and $G$ extends to a local holomorphic deformation of $g$: there is a neighborhood $W_{{\bf c}_1}\subset W$ of  ${\bf c}_1$, an open set $U_{{\bf c}_1}\supset U$ such that $G: W_{{\bf c}_1}\times U\to W_{{\bf c}_1}\times \C$ extends to a holomorphic map
$G: (w,z)\in W_{{\bf c}_1}\times U_{{\bf c}_1}\mapsto (w,G_w(z))\in W_{{\bf c}_1}\times \C$
and $DG_w|U_{{\bf c}_1}\ne 0$ for each $w\in W_{{\bf c}_1}$ and
there exists either $\mu(\nu)\in \{1,2,\ldots,\nu\}$ or  $l_\nu<q_\nu$
so that
either
\begin{itemize}
\item  if $\mu(\nu)$ is defined then $g^{q_\nu}(p_\nu({\bf c}_1))=p_{\mu(\nu)}({\bf c}_1)$, $g^k(c_{1,\nu})\not\in P_{0,{\bf c}_1}$ for each $1\le k<q_\nu$
then define
$$R_\nu(w)=G_{w}^{q_\nu-1}(w_\nu)-p_{\mu(\nu)}(w)$$
\item if $l_\nu$ exists then
$g_{{\bf c}_1}^{q_\nu}(p_\nu({\bf c}_1))=g^{l_\nu}(p_\nu({\bf c}_1))$ and $g^k(w_\nu)\not\in P_{0,{\bf c}_1}$ for all $1\le k\le q_\nu$,
then we define
$$R_\nu(w)=G_{w}^{q_\nu-1}(w_\nu)-G_{w}^{l_\nu-1}(w_\nu).$$
\end{itemize}
Notice that $R_\nu$ is only defined in a small neighbourhood $W_{{\bf c}_1}$ of ${\bf c}_1$.

The following theorem gives a condition
 implying that along the curve $L_*$ all bifurcations
are in the same direction.

\begin{theorem}\label{thm:curves}
For each $w\in L_*$, define $E_w\in T_w\C^\nu$ to be the unique unit vector in $\R^\nu$ orthogonal to the range of the matrix $V_w$
and so that
$$\det \left[\dfrac{1}{DG_w^{q_1-1}(w_1)}\nabla R_1(w),\dots, \dfrac{1}{DG_w^{q_{\nu-1}-1}(w_{\nu-1})}\nabla R_{\nu-1}(w), E_w\right]>0.$$
Then
\begin{itemize}
\item $E_w$ is a tangent vector to $L_*$ at $w$ and $L_*\ni w\mapsto E_w$ is real analytic.
In particular, $E_w$ defines an orientation on the entire curve $L_*$ which we will call {\lq}positive{\rq}.
\item If for some ${\bf c}_1\in  L_*$ the corresponding map $g=G_{{\bf c}_1}$
is a marked map as above  
and the positively oriented transversality property (\ref{eq:trans2}) holds for the local holomorphic deformation
$(g,G,p)_{W_{{\bf c}_1}}$, then
$$\dfrac{1}{Dg^{q_\nu-1}(c_{1,\nu}) } \nabla_{E}R_\nu ({\bf c}_1)>0$$
where  $Dg^{q_\nu-1}(c_{1,\nu})$ is the spatial derivative, and $\nabla_{E}R_\nu({\bf c}_1)$ is the derivative in the direction of
the tangent vector $E=E_{{\bf c}_1}$ of $L_*$ at ${\bf c}_1$.
\end{itemize}
\end{theorem}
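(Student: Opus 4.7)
The plan is purely a linear algebra computation once the right vectors are put in place. The key identification is that the numerator $\det(D\mathcal R(\mathbf{c}_1))$ in the definition of $Q$ in (\ref{eq:trans2}) is the determinant of the matrix whose $j$-th column (upon transposition) is the gradient $\nabla R_j(\mathbf{c}_1)$ viewed as a vector in $\R^\nu$; note that each $R_j$ is real on $W\cap\R^\nu$ because $(G,\mathbf{p})_W$ is real, so these gradients are genuinely real vectors at $\mathbf{c}_1\in \R^\nu$. Writing $v_j:=\nabla R_j(\mathbf{c}_1)/Dg^{q_j-1}(c_{1,j})$ for $j=1,\dots,\nu$, multi‑linearity of the determinant rewrites the positively oriented transversality hypothesis as
\begin{equation*}
 Q \;=\; \det\bigl[v_1,\,v_2,\,\dots,\,v_\nu\bigr] \;>\;0.
\end{equation*}

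For the first bullet, observe that $L=\{R_1=\cdots=R_{\nu-1}=0\}$, and on $L_*$ the vectors $v_1,\dots,v_{\nu-1}$ (equivalently, $\nabla R_1,\dots,\nabla R_{\nu-1}$) are linearly independent by the very definition (\ref{eq:maxrank}) of $L_*$. Hence by the implicit function theorem $L_*$ is a real‑analytic one‑dimensional manifold whose tangent line at $w$ is the orthogonal complement of $\mathrm{span}(v_1,\dots,v_{\nu-1})$. Since $E_w$ is by construction a unit vector in this complement, it is tangent to $L_*$; its real‑analytic dependence on $w\in L_*$ follows from Gram–Schmidt applied to the real‑analytic family $v_1(w),\dots,v_{\nu-1}(w)$, and the positivity condition in the definition of $E_w$ singles out a continuous (hence real‑analytic) orientation along the connected curve $L_*$.

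For the second bullet, decompose $v_\nu$ with respect to $\mathrm{span}(v_1,\dots,v_{\nu-1})\oplus \R E$:
\begin{equation*}
 v_\nu \;=\; \langle v_\nu,E\rangle\,E \;+\; v_\nu^{\parallel},\qquad v_\nu^{\parallel}\in\mathrm{span}(v_1,\dots,v_{\nu-1}).
\end{equation*}
Substituting this into the determinant and using multi‑linearity plus the fact that $v_\nu^{\parallel}$ contributes zero (the column is a combination of earlier columns) gives
\begin{equation*}
 Q \;=\; \langle v_\nu, E\rangle\,\cdot\,\det\bigl[v_1,\dots,v_{\nu-1},E\bigr].
\end{equation*}
The second factor on the right is strictly positive by the sign condition imposed in the definition of $E=E_{\mathbf{c}_1}$. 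Hence the assumption $Q>0$ is equivalent to $\langle v_\nu,E\rangle>0$; but $\langle v_\nu,E\rangle=\langle \nabla R_\nu(\mathbf{c}_1),E\rangle/Dg^{q_\nu-1}(c_{1,\nu})=\nabla_E R_\nu(\mathbf{c}_1)/Dg^{q_\nu-1}(c_{1,\nu})$, which is precisely the desired inequality.

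The only point that requires care, rather than being a genuine obstacle, is the book‑keeping: verifying that the ``Jacobian'' appearing in (\ref{eq:trans2}) really coincides (up to transposition, which does not affect determinants) with the matrix of gradients used above, and that all the relevant quantities are real at $\mathbf{c}_1\in\R^\nu$ so that the orthogonal decomposition is legitimate. Once this is checked the result follows by the one‑line computation above, so there is no hard step in the argument; its content is that the sign of $Q$ at any marked point $\mathbf{c}_1\in L_*$ is the same as the sign of the directional derivative of the ``new'' critical relation $R_\nu$ along the positive orientation of the curve $L_*$.
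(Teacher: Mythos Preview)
Your proof is correct and follows essentially the same route as the paper: both define the normalized gradients $v_j=\nabla R_j/Dg^{q_j-1}$, decompose $v_\nu$ as a multiple of $E$ plus a vector in $\mathrm{span}(v_1,\dots,v_{\nu-1})$, and use multilinearity together with the sign convention on $E$ to extract the sign of $\langle v_\nu,E\rangle$ from $Q>0$. Your treatment of the first bullet (tangency and real-analyticity of $E_w$) is in fact more explicit than the paper's, which omits this verification.
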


\begin{proof}
Let $A=[A_1,\dots,A_\nu]$ be the $\nu\times \nu$ matrix with the first $(\nu-1)$-columns equal to the columns of
$V_{{\bf c}_1}$ and the last column equal to $\dfrac{1}{DG_{{\bf c}_1}^{q_{\nu}-1}(c_{1,\nu})}\nabla R_{\nu}({{\bf c}_1})$.
Note that the determinant of this matrix is positive by the positive oriented transversality condition
(\ref{eq:trans2}).
There exists $\lambda$ so that  $A_\nu=\lambda E_{{\bf c}_1} +  v$ where $v$ is
in the range of the matrix $V_{{\bf c}_1}$.
So $\det(A_1,\dots,A_{\nu-1},A_\nu)=\det(A_1,\dots,A_{\nu-1},\lambda E_{{\bf c}_1} +  v)=\lambda \det(A_1,\dots,A_{n-1},E_{{\bf c}_1})$.
Since the first and the last determinants are positive, we have $\lambda>0$ and so $A_\nu \cdot E_{{\bf c}_1}>0$.
This is precisely the expression claimed to be positive in the theorem.
\end{proof}

\begin{remark}\label{rem:ent1}
Applying this theorem to the setting of a family of globally defined real analytic maps,
we obtain monotonicity of entropy along such curves $L_*$. This holds because
 the topological entropy is equal to the growth rate
of the number of laps  for continuous piecewise monotone
interval or circle maps, \cite{MS}.
\end{remark}

%

\subsection{Application to {\lq}bone{\rq} curves in the space of real cubic maps}

For every $(a,b)\in \Sigma:=\R^2\setminus \{a=0\}$, let $f_{a,b}(x)=x^3-3a^2x+b$. Then $f_{a,b}$ has critical points $\pm a$.
It is clear that for any $(a,b)\in \C^2\setminus \{a=0\}$, $f_{a,b}$ is locally parametrized by its (different) critical values $w_{1,2}=\pm 2a^3+b$.
For $q>0$ consider a connected component $L_{q}$ of the set
$$\{(a,b)\in \Sigma: f_{a,b}^q(a)\in \{\pm a\}, f_{a,b}^k(a)\notin \{\pm a\}, k=1,\cdots,q-1\}.$$
By \cite{LSvS}, the corresponding $2\times 1$ matrix  (\ref{eq:maxrank})    has rank one
and hence $L_{q}=L_{q*}$ is a simple smooth curve. In fact, the positively oriented transversality property holds for  any critically-finite $f_{a,b}$;
this follows similar to the proof of Theorem~\ref{single} of the next Section (see \cite{LSvScompanion} for a general result though).
By Theorem \ref{thm:curves} we have a positive orientation on $L_q=L_q^*$ and the entropy
increases or decreases along this curve as mentioned in Remark~\ref{rem:ent1}.


For $q>0$ consider a connected component $\Gamma_{q}$ of the set
$$\{(a,b)\in \Sigma: f_{a,b}^q(a)=a\}$$
which was called a {\em bone} in \cite{MT}.
The next theorem proves a crucial property of this set, which was derived in \cite{MT}
using global considerations (including Thurston rigidity for postcriticallly finite maps). Here we will derive
this property from positive transversality.

\begin{figure}
\centering
\begin{subfigure}{.5\textwidth}
  \centering
  \includegraphics[width=0.9\textwidth]{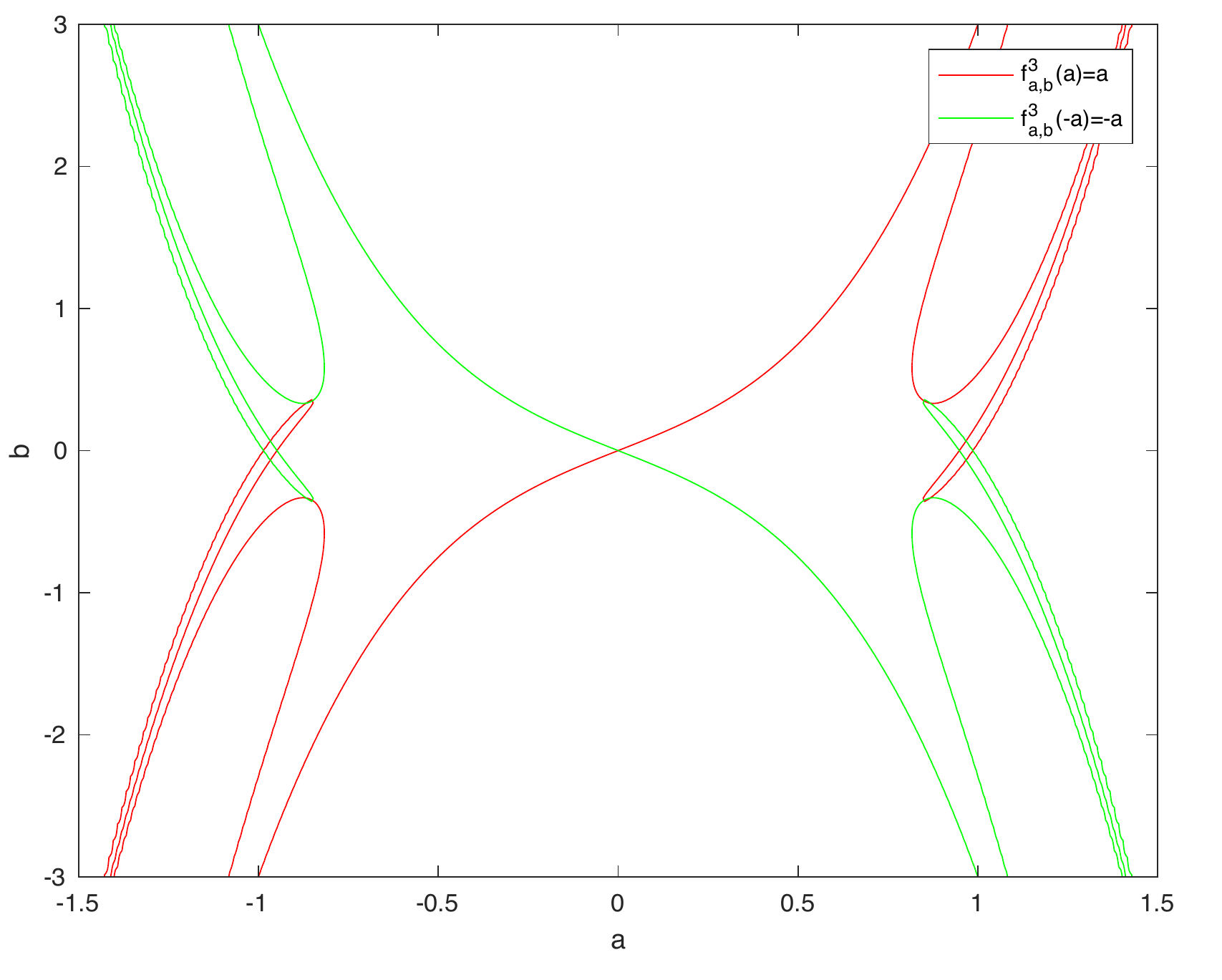}\label{fig4a}
\end{subfigure}%
\begin{subfigure}{.5\textwidth}
  \centering
  \includegraphics[width=0.9\textwidth]{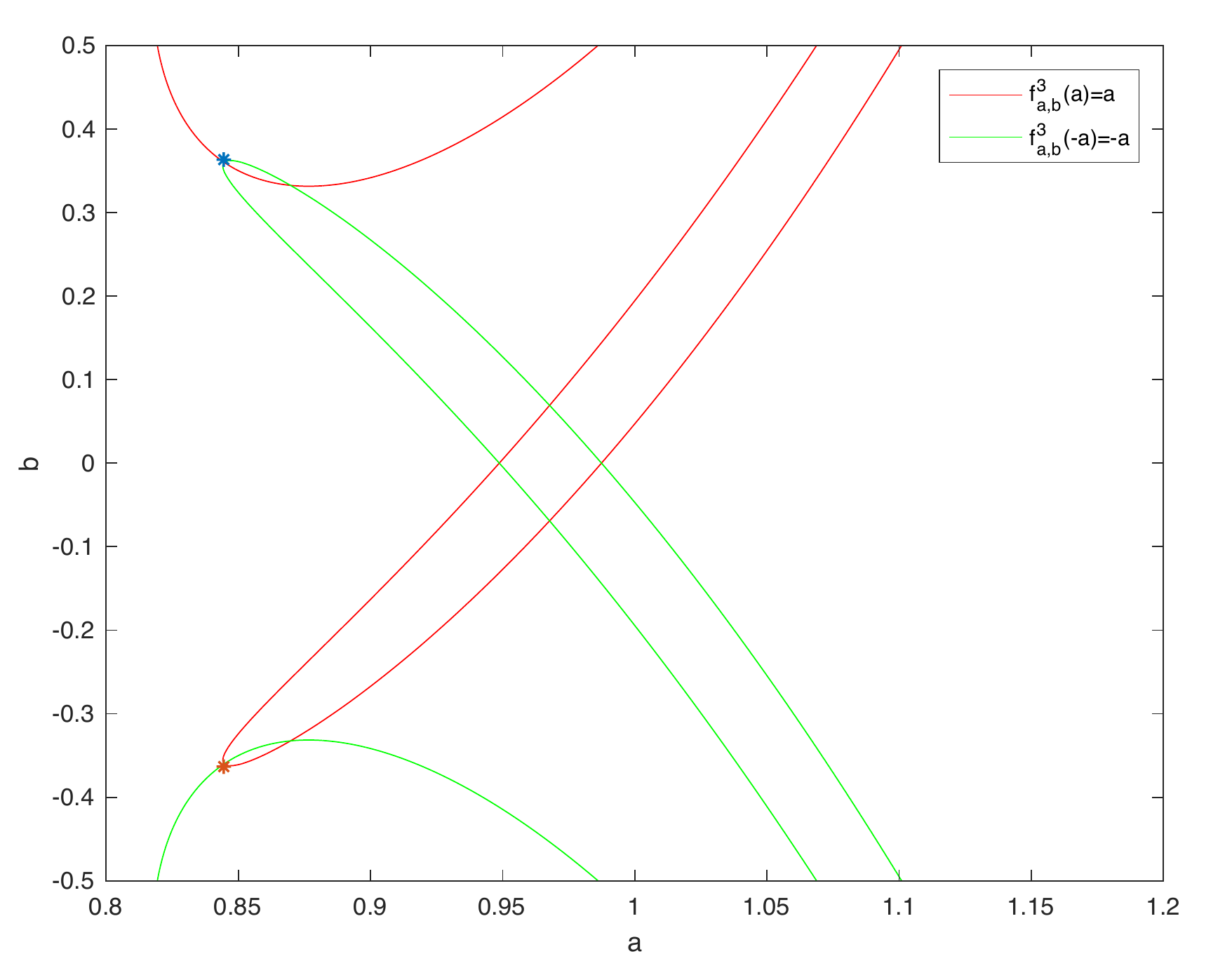}\label{fig4b}
 \label{fig:renormalization}
\end{subfigure}
\caption{\small The sets $\Gamma_3(\pm a)=\{(a,b); f_{a,b}^3(\pm a)=\pm a\}$ for the cubic family $f_{a,b}=x^3 - 3a^2 x + b$ (with critical points $\pm a$). The two curves in the left panel crossing at $(0,0)$ correspond
to the sets where one of the critical points of $f_{a,b}$ is a fixed point, i.e.  where $\{f_{a,b}(\pm a)=\pm a\}$. The two points marked with $*$
in the magnification on the right are where the two critical points $\pm a$ lie on the same orbit. By Theorem~\ref{thm:bones}, the topological entropy is monotone on the components of $\Gamma_3(\pm a)$ minus these points. This information was already obtained by Milnor and Tresser in
\cite{MT} but here we derive it from the local methods derived in this paper.}
\label{fig:blahblah}
\end{figure}

\begin{theorem}[Properties of bones]
\label{thm:bones}
Assume that for some $(\tilde a,\tilde b) \in \Gamma_q$ the integer $q>0$ is minimal so that $f_{\tilde a,\tilde b}^q(\tilde a)=\tilde a$.
Then for all $(a,b) \in \Gamma_q$ one has that  $f_{a,b}^i(a)\ne  a$ for all $0<i<q$. Moreover,
\begin{enumerate}
\item there exists at most one $(a_*, b_*) \in \Gamma_q$ so that $f_{a_*,b_*}^i(a_*)=-a_*$ for some $0\le  i \le  q$.
\item the kneading sequence of $f_{a,b}$ is monotone on each of the components of $\Gamma_q\setminus \{(a_*,b_*)\}$;
more precisely, it  is non-decreasing
on one component and non-increasing on the other component. 
\end{enumerate}
\end{theorem}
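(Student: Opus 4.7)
The approach is to apply Theorem~\ref{thm:curves} in the cubic family. Take $\nu = 2$ with local coordinates $(w_1, w_2) = (b - 2a^3, b + 2a^3)$ (the two critical values) and $p_1(w) = a(w)$; the single marked relation $R_1(w) = f_w^{q-1}(w_1) - a(w)$ cuts out $\Gamma_q$. By the rank-one statement of \cite{LSvS} cited just before the theorem, $\Gamma_q = L_{q*}$ is a smooth real-analytic curve carrying the canonical positive orientation $E_w$ from Theorem~\ref{thm:curves}, and positively oriented transversality holds for every critically finite $f_{a,b}$.

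For the first (unnumbered) assertion, suppose for contradiction that some $(a,b) \in \Gamma_q$ satisfies $f^i_{a,b}(a) = a$ with $0 < i < q$. Let $p$ be the minimal period of $a$ under $f_{a,b}$; then $p$ divides both $i$ and $q$, so $p < q$. Applying the Main Theorem to the marked map $f_{a,b}$ with the single relation $g^p(a) = a$, the zero set $\Gamma_p := \{w : f^p_w(a(w)) = a(w)\}$ is a smooth real-analytic curve through $(a,b)$. Because $p \mid q$, we have $\Gamma_p \subset \{f^q_w(a(w)) = a(w)\}$ locally; matching dimensions, $\Gamma_p$ coincides with $\Gamma_q$ near $(a,b)$, and real-analytic continuation extends the equality to the entire connected component $\Gamma_q$. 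But then $f^p_{\tilde a,\tilde b}(\tilde a) = \tilde a$ with $p < q$, contradicting minimality of $q$ at $(\tilde a, \tilde b)$.

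For parts (1) and (2), at any candidate $(a_*, b_*) \in \Gamma_q$ with $f^i_{a_*,b_*}(a_*) = -a_*$ for some $0 \le i \le q$, the map $f_{a_*,b_*}$ acquires a second critical relation, becoming fully critically finite. Marking both relations, positively oriented transversality makes $D\mathcal{R}(\textbf{c}_1)$ invertible, so $(a_*, b_*)$ is an isolated common zero; reducing the finitely many admissible values of $i$ to at most one point on $\Gamma_q$ uses the global connectedness of $\Gamma_q$ together with the $a \leftrightarrow -a$ symmetry of the cubic family. Statement (2) then follows from Theorem~\ref{thm:curves} applied at $(a_*, b_*)$: the inequality $\nabla_E R_2(a_*, b_*)/Dg^{q_2-1}(c_{1,2}) > 0$ says that $f^i_w(-a(w))$ crosses the orbit of $a(w)$ strictly monotonically as $w$ traverses $\Gamma_q$ in the positive direction. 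This forces the kneading sequence of $f_{a,b}$ to vary monotonically on each of the two components of $\Gamma_q \setminus \{(a_*, b_*)\}$, with opposite orientations on either side of the crossing.

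The main obstacle is the uniqueness of $(a_*, b_*)$ in (1): positive transversality alone only guarantees that each admissible value of $i$ contributes an isolated point, and ruling out several distinct such points on the connected curve $\Gamma_q$ requires a global argument combining the monotonicity established along $\Gamma_q$ with the $a \leftrightarrow -a$ symmetry, in order to show that the free critical point $-a$ cannot return to the orbit of $a$ more than once as $w$ traverses $\Gamma_q$.
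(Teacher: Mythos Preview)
Your treatment of the preliminary assertion (constancy of the minimal period along $\Gamma_q$) and of part (2) is fine and agrees with the paper. The substance of the theorem is part (1), and there your proposal has a genuine gap, which you yourself flag in the final paragraph but do not close.

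Positive transversality at a critically finite parameter $(a_*,b_*)$ gives more than isolatedness: it gives a \emph{sign}. The paper exploits this as follows. Writing $R_1(a,b)=f^i_{a,b}(a)+a$ and using the positive transversality inequality (\ref{eq:trans2}) together with the decomposition in Theorem~\ref{thm:curves}, one finds that the sign of the directional derivative $D_{V}R_1(a_*,b_*)$ along the oriented curve $\Gamma_q$ equals the sign of
\[
\Delta_i(a_*,b_*)\;:=\;\prod_{\substack{1\le k<q\\ k\ne i}} Df_{a_*,b_*}\bigl(f^k_{a_*,b_*}(a_*)\bigr).
\]
Now suppose $-a$ meets the orbit of $a$ at a second parameter $(a_\bullet,b_\bullet)\in\Gamma_q$, say $f^j_{a_\bullet,b_\bullet}(a_\bullet)=-a_\bullet$. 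If $j=i$, the sign of $\Delta_i$ is unchanged along the arc joining the two parameters (the ordering of the orbit points on $\R$ is constant along $\Gamma_q$), so the two crossings of $f^i(a)+a$ through $0$ would have the same sign of derivative, which is impossible. If $j\ne i$, then along that arc $-a$ sits strictly between the neighbouring orbit points $f^i(a)$ and $f^j(a)$; comparing $\Delta_j$ to $\Delta_i$ one replaces the factor $Df(f^i(a))$ by $Df(f^j(a))$, and since $-a$ is a \emph{folding} critical point these two spatial derivatives have opposite signs. Hence $\Delta_j(a_\bullet,b_\bullet)$ and $\Delta_i(a_*,b_*)$ have opposite signs, forcing the crossing at $(a_\bullet,b_\bullet)$ to go the wrong way. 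Either case yields a contradiction.

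Your appeal to the $a\leftrightarrow -a$ symmetry is a red herring: that involution sends a bone for the critical point $a$ to a bone for $-a$, not to itself, and plays no role in the paper's argument. What is missing from your proposal is precisely the sign calculus with $\Delta_i$ and the observation that neighbouring orbit points on either side of the turning point $-a$ carry spatial derivatives of opposite sign.
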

\begin{proof}
That  $f_{a,b}^i(a)\ne  a$ for all $0<i<q$, $(a,b)\in \Gamma_q$  follows from the implicit function theorem because the multiplier of this
$q$-periodic orbit is not equal to $1$.  It is well known that
$\Gamma_q$ is a smooth curve (this also follows for
example  from \cite{LSvS}).
The curve $\Gamma_q$ is a component of the zero set of
$$\tilde R(a,b)=f^q_{a,b}(a)-a.$$
As remarked, the critical values $w=(w_1,w_2)$ are local parameters along the curve $\Gamma_q$
and we define a 
direction on the curve  $\Gamma_q$ by the tangent vector
$V_{(a,b)}=(-\dfrac{\partial \tilde R}{\partial w_2},\dfrac{\partial \tilde R}{\partial w_1})$.

Assume that for some $(a_*,b_*)\in \Gamma_q$
the orbit of $a_*$ contains the other critical point, i.e. assume that  $f^i_{a_*,b_*}(a_*)=-a_*$ for some $0<i<q$.
The idea of the proof below is as follows. We will show that as the point $(a,b)\in \Gamma_q$ passes through
$(a_*,b_*)$, the point $-a$ crosses $f^{i}_{a,b}(a)$  in a  direction which depends only on the sign of
$\Delta_i(a_*,b_*)$ where
\begin{equation} \Delta_i(a,b):=   \prod_{1\le k<q, k\not =i} Df_{a,b}(f_{a,b}^k (a)) 
\label{eq:neg33} \end{equation}
so $$\Delta_i(a_*, b_*)=Df_{a_*,b_*}^{i-1}(f_{a_*,b_*}(a_*)) Df_{a_*,b_*}^{q-i-1}(f_{a_*,b_*}(-a_*)).$$

Now as $(a,b)$ moves further along $\Gamma_q$, on the one hand (for the same reason)
$-a$ cannot cross $f^{i}_{a,b}(a)$  in the opposite  direction, and on the other hand
 $-a$ cannot cross a neighbour $f^j_{a,b}(a)$ of $f^i_{a,b}(a)$ at some other
 $(a_\bullet,b_\bullet)\in \Gamma_q$ because $\Delta_i(a_*,b_*)$ and $\Delta_j(a_\bullet,b_\bullet)$
 have opposite signs. Let us explain this in more detail.

Since the map $f_{a_*,b_*}$ is critically finite, one has positive transversality at this parameter.
More precisely, define
$$R_1(a,b)=f^i_{a,b}(a)-(-a), R_2(a,b)=f^{q-i}_{a,b}(-a)-a.$$
Now observe that $$\left( \dfrac{\partial  R_2}{\partial w_1},  \dfrac{\partial  R_2}{\partial w_2} \right) _{(a_*,b_*)}=
\left( \dfrac{\partial  \tilde R}{\partial w_1},  \dfrac{\partial \tilde R}{\partial w_2} \right) _{(a_*,b_*)}.$$
Hence the positive transversality condition can be written as
$$\dfrac{1}{\Delta_i(a_*,b_*)} 
\det \left( \begin{array}{cc} \dfrac{\partial  R_1}{\partial w_1}
& \dfrac{\partial  R_1}{\partial w_2} \\  & \\ \dfrac{\partial  \tilde R}{\partial w_2} & \dfrac{\partial  \tilde R}{\partial w_1}
\end{array}\right) _{(a_*,b_*)} >0
$$
As in the proof of  Theorem~\ref{thm:curves} we obtain
\begin{equation}
\dfrac{1}{\Delta_i(a_*,b_*)}  D_{V_{(a_*,b_*)}} R_1(a_*,b_*) >0\label{eq:p} \end{equation}
where $D_{V_{(a_*,b_*)}}$ stands for the directional derivative of $R_1$ in the direction $V_{(a_*,b_*)}$.
To be definite, let us consider the case that
\begin{equation}\Delta_i(a_*,b_*)  <0. \label{eq:neg} \end{equation}
This implies that
\begin{equation} D_{V_{(a_*,b_*)}} R_1(a_*,b_*) <0\end{equation}
and so the derivative of
\begin{equation}\Gamma_q\ni (a,b) \mapsto f^i_{a,b}(a)-(-a)\mbox{ is  negative at }(a_*,b_*).\label{eq:de1} \end{equation}
By contradiction, assume that there exists another parameter $(a_\bullet,b_\bullet)\in \Gamma_q$ which is the nearest to the right of $(a_*,b_*)$
 for which there exists $0<j<q$ so that   $f^{j}_{a_\bullet,b_\bullet}(a_\bullet)=-a_\bullet$.
In what follows we use that the ordering of the points $a,\dots,f^{q-1}_{a,b}(a)$ in $\R$ does not change along the curve $\Gamma_q$.
Let   $\Gamma_q^\bullet$ be the open arc between $(a_*,b_*)$ and $(a_\bullet,b_\bullet)$.
Notice that  because of  (\ref{eq:neg})
\begin{equation} \Delta_i(a,b)<0\label{eq:neg3} \end{equation}
for all $(a,b)\in \Gamma_q^\bullet$.
Observe that $j\ne i$ because when $j=i$  then  (\ref{eq:neg3})  holds on the closure of $\Gamma_q^\bullet$ and so
$D_{V_{(a_\bullet,b_\bullet)}} R_1(a_\bullet,b_\bullet) <0$. Therefore
(\ref{eq:de1}) also holds at $(a_\bullet,b_\bullet)$ which is clearly a contradiction.
Therefore, $j\ne i$ and
\begin{equation}
f_{a,b}^i(a)<-a<f_{a,b}^j(a)\label{eq:ineq} \end{equation} along the open arc  $\Gamma_q^\bullet$
and there are no points of the orbit of $a$ between $f^i_{a,b}(a),f^j_{a,b}(a)$.
The sign of
\begin{equation}\Delta_j(a,b):= Df_{a,b}^{j-1}(f_{a,b}(a))Df_{a,b}^{q-j-1}(f_{a,b}(-a)) \label{eq:neg2} \end{equation}
is constant for $(a,b)$ near $(a_\bullet,b_\bullet)$. Therefore for $(a,b)\in \Gamma_q^\bullet$,
$$\sign\Delta_j(a_\bullet,b_\bullet)=\sign \Delta_j(a,b)=\sign \left[ \Delta_i (a,b) \dfrac{Df_{a,b}(f^i_{a,b}(a))}{Df_{a,b}(f^j_{a,b}(a))}\right].$$
Because of (\ref{eq:neg3}) we therefore have
$$\sign\Delta_j(a_\bullet,b_\bullet)= - \sign \left[ \dfrac{Df_{a,b}(f^i_{a,b}(a))}{Df_{a,b}(f^j_{a,b}(a))}\right].$$
The key point is that the sign of the ratio in the r.h.s. of this expression is negative because
$-a$ is a folding critical point and because of (\ref{eq:ineq}).
It follows that $\Delta_j(a_\bullet,b_\bullet)>0$ and so arguing as before the derivative of
\begin{equation}\Gamma_q\ni (a,b) \mapsto f^j_{a,b}(a)-(-a)\mbox{ is positive at }(a_\bullet,b_\bullet).\label{eq:de2} \end{equation}
But by (\ref{eq:ineq}) we have that
$ f^j_{a,b}(a)-(-a) >0$ on $\Gamma_q^\bullet$. This and (\ref{eq:de2})  imply that $ f^j_{a_\bullet,b_\bullet}(a_\bullet)-(-a_\bullet)>0$
which is a contradiction.

The 2nd assertion follows immediately from Theorem~\ref{thm:curves} and Remark~\ref{rem:ent1}.
\end{proof}

\begin{remark} The proof of the previous theorem can also be applied to the setting of polynomials of higher degrees.
\end{remark}

\appendix

\section{The family $f_c(x)=|x|^{\ell_\pm}+c$ with $\ell_{\pm}>1$ large}\label{sec:finiteorder}
In this section we obtain monotonicity for unimodal (not necessary symmetric!) maps
in the presence of critical points of large non-integer order, but only
if not too many points in the critical orbit are in the orientation reversing branch.
\subsection{Unimodal family with high degrees}

\begin{theorem}\label{thm:finiteorder2}
Fix real numbers $\ell_-,\ell_+\ge 1$ and consider the family of unimodal maps $f_c=f_{c,\ell_-,\ell_+}$ where
$$f_c(x)=\left\{\begin{array}{ll}
|x|^{\ell_-}+c & \mbox{ if } x\le 0\\
|x|^{\ell_+}+c & \mbox{ if } x\ge 0.
\end{array}
\right.
$$
For any integer $L\ge 1$ there exists $\ell_0>1$
so that for any $q\ge 1$ and any
periodic kneading sequence   $\bold i=i_1i_2\cdots\in \{-1,0,1\}^{\Z^+}$ of period $q$
so that
$$\#\{0\le j< q ; i_j =-1 \}\le L,$$
and any pair $\ell_-,\ell_+\ge \ell_0$ there is at most one $c\in\R$ for which the kneading sequence
of $f_c$ is equal to $\bold i$. Moreover,
\begin{equation}\label{fiortrans2}
\sum_{n=0}^{q-1} \frac{1}{Df_c^n(c)}>0.
\end{equation}
\end{theorem}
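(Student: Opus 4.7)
The plan is to establish the positive transversality inequality~\eqref{fiortrans2} by a direct sum estimate exploiting the combinatorial constraint $\#\{j:i_j=-1\}\le L$, and then to deduce the uniqueness statement from positive transversality via the standard monotonicity-of-kneading principle. Since the family is of pure translation type $G_c(z)=f(z)+c$, in the notation of Example~\ref{example1} all $L_k\equiv 1$, and formula~\eqref{eq:I-rA} at $\rho=1$, combined with Proposition~\ref{prop:non-exceptional} and~\eqref{eq:Drho}, reduces~\eqref{eq:trans2} to~\eqref{fiortrans2}. Writing $c_k:=f_c^k(0)$, $D_n:=Df_c^n(c)=\prod_{j=1}^n Df_c(c_j)$, and $i_k:=\mathrm{sign}(c_k)$, the task becomes showing
\[
Q \;:=\; \sum_{n=0}^{q-1}\frac{1}{D_n} \;>\; 0, \qquad D_0=1,\quad \mathrm{sign}(D_n)=\prod_{j=1}^n i_j.
\]

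Let $n_1<\cdots<n_s$, with $s\le L$, enumerate the indices in $\{1,\ldots,q-1\}$ where $i_{n_r}=-1$; set $n_0=0$, $n_{s+1}=q$, and $B_r=[n_r,n_{r+1}-1]$. The blocks $B_r$ partition $\{0,\ldots,q-1\}$ into $s+1$ sign-blocks on which $\mathrm{sign}(D_n)=(-1)^r$. The central geometric input is: if $c<0$ and $c_j,c_{j+1}>0$, then $c_{j+1}=c_j^{\ell_+}+c>0$ forces $c_j>|c|^{1/\ell_+}$, whence
\[
|Df_c(c_j)|\;=\;\ell_+ c_j^{\ell_+-1}\;\ge\;\ell_+|c|^{(\ell_+-1)/\ell_+}\;=:\;\rho,
\]
with $\rho\to\infty$ as $\ell_0\to\infty$ provided $|c|$ is bounded away from $0$. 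A preliminary combinatorial step, using the $L$-budget on negative iterates, confines admissible $c$ to a compact subset of $\R\setminus\{0\}$ and the critical orbit to a fixed bounded region, with bounds depending only on $L$ and $\ell_0$.

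Consequently $|D_n|$ grows geometrically with ratio $\ge\rho$ inside each block, giving $\sum_{n\in B_r}|D_n|^{-1}\le(1-1/\rho)^{-1}|D_{n_r}|^{-1}$; and across a block boundary the small factor $\ell_-|c_{n_r}|^{\ell_--1}$ is more than compensated by the derivative gain $\ge\rho$ accumulated on the preceding positive run, so that $|D_{n_r}|\ge C\rho^{r}$ for some constant $C=C(L)$. Telescoping these bounds yields $Q=1/D_0 + O(\rho^{-1})=1+O(\rho^{-1})$, which is strictly positive once $\ell_0$ is chosen large enough in terms of $L$. Uniqueness then follows by a standard argument: if $c<c'$ both realised the same periodic kneading sequence $\mathbf{i}$, positive transversality at each parameter, together with the sign of $Df_c^{q-1}(c)$ and the local min/max dichotomy for $x\mapsto f_c^q(x)$ at $0$ recalled in the remark on positive transversality in the introduction, would force the Milnor-Thurston kneading sequence to be strictly monotone in $c$ in a neighbourhood of each, contradicting equality of the kneading sequences at $c$ and $c'$.

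The main obstacle I anticipate is making the block-boundary compensation quantitative: a near-zero iterate at the end of a positive block makes both the terminal factor $\ell_+ c_{n_r-1}^{\ell_+-1}$ and the boundary factor $\ell_-|c_{n_r}|^{\ell_--1}$ small simultaneously, so showing that the net multiplicative factor is still $\ge\rho$ requires a careful geometric argument that any such near-zero iterate is \emph{paid for} by a sufficiently long adjacent positive run. This is where the combinatorial hypothesis $\#\{i_j=-1\}\le L$ enters in an essential way; without it, many near-zero iterates could cluster and destroy the dominance of the leading term $1/D_0=1$.
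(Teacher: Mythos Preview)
Your proposal has a genuine gap at exactly the point you flag. The claim that ``across a block boundary the small factor $\ell_-|c_{n_r}|^{\ell_--1}$ is more than compensated by the derivative gain $\ge\rho$ accumulated on the preceding positive run, so that $|D_{n_r}|\ge C\rho^{r}$'' is never established, and the heuristic you offer is not correct. A near-zero iterate does \emph{not} force a long preceding positive run: if $c_{m}<0$ is close to $0$ (so that $\ell_-|c_m|^{\ell_--1}$ is tiny), all this forces is $c_{m-1}\approx|c|^{1/\ell_+}$ one step back; the length of the positive block ending at $m-1$ is unconstrained. Likewise, if instead $c_{m-1}>0$ is near zero (so $\ell_+c_{m-1}^{\ell_+-1}$ is tiny), then $|D_{m-1}|$ can be arbitrarily small and the term $1/D_{m-1}$ is large with a fixed sign---there is no evident compensating term. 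Thus the assertion $Q=1+O(\rho^{-1})$ is unsupported. Your deduction of uniqueness is also too quick: local monotonicity of the kneading map at $c$ and at $c'$ does not by itself contradict $\mathcal{K}(f_c)=\mathcal{K}(f_{c'})$; to rule this out you would need transversality at every critically periodic parameter in $[c,c']$, but intermediate such parameters may have kneading sequences violating the $L$-constraint.

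The paper does \emph{not} proceed by a direct real-variable estimate of the sum $Q$. Instead it works in the complex plane: it introduces the notion of a $\theta$-regular holomorphic motion of $P$ (angle conditions on $h_\lambda(a)$ and on ratios $h_\lambda(b)/h_\lambda(a)$), and proves via Schwarz-lemma-type angle estimates for $z\mapsto z^{1/\ell}$ that after $q$ successive lifts a $\theta$-regular motion becomes $\theta/2$-regular. This establishes the lifting property for the triple $(f_c,G,\mathbf{p})_W$, and the Main Theorem then yields positive transversality~\eqref{fiortrans2}. Uniqueness is obtained directly from the contraction of the lifts (the motions converge to constants, forcing $\tilde c=c$), not via a transversality-implies-monotonicity argument. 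The role of the hypothesis $\#\{j:i_j=-1\}\le L$ in the paper is to bound the number of times the delicate ``pullback through the negative branch'' estimate (Lemma~\ref{lem:pbangle}(3)) is invoked per period, each such invocation losing a fixed multiplicative constant; this is a complex-geometric cancellation that your block-by-block real-variable bound does not capture.
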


{\bf Notations.} As usual, for any three distinct point $o, a, b\in\C$, let $\angle aob$ denote the angle in $[0,\pi]$ which is formed by the rays $oa$ and $ob$. We shall often use the following obvious relation: for any distinct four points $o,a, b, c$,
$$\angle aob +\angle boc \ge \angle aoc.$$
For $\theta\in (0, \pi)$, let
$$D_{\theta}=\{z\in\C\setminus\{0,1\}: \angle 0z1>\pi-\theta\}$$ and let
$$S_\theta=\{re^{it}: t\in (-\theta, \theta)\}.$$ For $0<t<1$, we shall only consider $z^t$ in the case $z\not\in (-\infty, 0)$ and $z^t$ is understood as the holomorphic branch with $1^t=1$.

Let us fix a map $f=f_{c,\ell_-,\ell_+}$ with a periodic critical point of period $q$ and let $P=\{f^n(0): n\ge 0\}$. So $P$ is a forward invaraint finite set.
Denote
$$\ell=\min\{\ell_-,\ell_+\}.$$
\begin{definition}
A holomorphic motion $h_\lambda$ of $P$ over $(\Omega,0)$, is called ${\theta}$-regular if
\begin{enumerate}
\item[(A1).] For $a\in P$, $$h_{\lambda}(a)\in S_{4\theta/\ell} \text{ , if } a>0$$
and
$$h_{\lambda}(a)\in -S_{4\theta/\ell} \text{ , if } a<0;$$
\item[(A2).] For $a, b\in P$, $|a|>|b|>0$ and $ab>0$, $$\frac{h_{\lambda}(b)}{h_{\lambda}(a)}\in D_{\theta}.$$
\end{enumerate}
\end{definition}

Given a $\theta$-regular holomorphic motion $h_{\lambda}$ of $P$ over $\Omega$, with $\theta\in (0, \pi)$, one can define another holomorphic motion $\tilde{h}_{\lambda}$ of $P$ over the same domain $\Omega$ as follows: $\tilde{h}_{\lambda}(0)=0$; for $a\in P$ with $a>0$,
$$\tilde{h}_\lambda(a)=(h_{\lambda}(f(a))-h_{\lambda}(f(0)))^{1/\ell_+};$$ for $a\in P$ with $a<0$, define
$$\tilde{h}_\lambda(a)=-(h_{\lambda}(f(a))-h_{\lambda}(f(0)))^{1/\ell_-}.$$

 The new holomorphic motion is called the {\em lift} of $h_{\lambda}$ which clearly satisfies the condition (A1), but not necessarily (A2) in general.

\begin{mainlemma} There is $\ell_0$ depending only on the number $L$  such that for any $\ell\ge \ell_0$ and each $\theta$ small enough, the following holds:
If $\#\{0\le j< q ; i_j =-1 \}\le L$ and
if  a $\theta$-regular motion can be successively lifted $q-1$ times and all these successive lifts are $\theta$-regular, then the $q$-th lift of the holomorphic motion is $\theta/2$-regular.
\end{mainlemma}

\begin{proof}[Proof of Theorem~\ref{thm:finiteorder2}]
Given $L$, choose $\ell_0$ as in the Main Lemma. It is enough to prove (\ref{fiortrans2}) provided $\ell\ge \ell_0$. Consider a local holomorphic deformation $(f_c,f_w,\textbf{p})_W$ where $W\subset \C$ is a small neighbourhood of $c$, $f_w=f_c+(w-c)$ and $\textbf{p}=0$.
Let $h_\lambda$ be a holomorphic motion of $P$ over $(\Delta,0)$. Let us fix $\theta>0$ small enough.
Restricting $h_\lambda$ to a smaller domain $\Delta_\eps$, we may assume that $h_{\lambda}$ is $\theta$-regular and that $h_\lambda$ can be lifted successively for $q$ times. Therefore by the Main Lemma, we obtain a sequence of holomorphic motions $h^{(n)}_\lambda$ of $P$ over $(\Delta_\eps,0)$, such that $h^{(0)}_\lambda=h_{\lambda}$ and $h^{(n+1)}_\lambda$ is the lift of $h^{(n)}_\lambda$
and such that $h^{(n)}_\lambda(x)\in \pm S_{\theta_n}$ for all $n$ and all $x\in P$ where $\theta_n\to 0$ as $n\to \infty$.
Thus $(f_c,f_w,\textbf{p})_W$ has the lifting property and by the Main Theorem,  
the transversality condition (\ref{fiortrans2}) holds.

Alternatively, the uniqueness of $c$ follows directly from the Main Lemma. Indeed, let
$\tilde{f}=f_{\tilde c}$ be a map with the same kneading sequence as $f_{c}$. Then one can define a real holomorphic motion $h_{\lambda}$ over some domain $\Omega\ni 0,1$ such that $h_{\lambda}(f^n(0))=\tilde{f}^n(0)$ for $\lambda=1$. As above, for $i>0$ let $h_\lambda^{(i)}$ be the lift
of $h_\lambda^{(i-1)}$.  As we have just shown,
$h_\lambda^{(i)}(c)$ is contained in the sector $-S_{\theta_n}$ with $\theta_n\to 0$,
this sequence of functions $\lambda\to h_\lambda^{(i)}(c)$ has to converge to a constant function. Since by construction
of the lifts $\tilde{c}=h_1^{(n)}(c)$ for each $n\ge 1$  we conclude that $\tilde{c}=c$.
\end{proof}
\subsection{Proof of the Main Lemma}
\begin{lemma}\label{lem:Schwarz}
For any $\theta\in (0, \pi)$ and $0<t<1$, if $z\in D_{\theta}$ then $z^t\in D_{\theta}$.
\end{lemma}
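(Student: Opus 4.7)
My plan is to rephrase membership in $D_\theta$ via a Möbius transformation and then prove the resulting sector-to-sector mapping via a maximum principle with boundary values controlled by a convex-cone argument.

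A direct angle computation shows that for every $z\in\C\setminus\{0,1\}$ the angle $\angle 0z1$ equals $|\arg(1-1/z)|$ (for $z$ in the upper half plane this is the argument of $(1-z)/(-z)$; extend by conjugation). Thus $z\in D_\theta$ exactly when $\mu(z):=1-1/z$ lies in the open sector of half-angle $\theta$ about the negative real axis. Setting $w:=-\mu(z)=1/z-1$, the condition $z\in D_\theta$ becomes $w\in S_\theta$. Since $1/z^t=(1+w)^t$ (principal branch), we have $\mu(z^t)=1-(1+w)^t$, so $z^t\in D_\theta$ is equivalent to $f(w):=(1+w)^t-1$ lying in $S_\theta$. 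The lemma therefore reduces to showing $f(S_\theta)\subset S_\theta$.

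Next I would analyze $f$ along the upper boundary ray $\gamma(r)=re^{i\theta}$, $r>0$. By the chain rule, $\tfrac{d}{dr}(f\circ\gamma)(r)=te^{i\theta}(1+re^{i\theta})^{t-1}$, whose argument equals $\theta-(1-t)\phi(r)$ with $\phi(r):=\arg(1+re^{i\theta})$. A short calculation $\phi'(r)=\sin\theta/|1+re^{i\theta}|^2>0$ shows $\phi$ is strictly increasing on $[0,\infty)$ with $\phi(0)=0$ and $\phi(r)\to\theta$ as $r\to\infty$, so the tangent vector has argument in $(t\theta,\theta]$ for every $r\ge 0$. Since $f(\gamma(0))=0$ and the closed cone $C:=\{\rho e^{i\alpha}:\rho\ge 0,\ t\theta\le\alpha\le\theta\}$ has angular opening $(1-t)\theta<\pi$ and is therefore convex, the representation $f(\gamma(r))=\int_0^r\tfrac{d}{ds}(f\circ\gamma)(s)\,ds$ is a limit of positive combinations of vectors in $C$ and hence stays in $C$. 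Consequently $\arg f(re^{i\theta})\in[t\theta,\theta]$ on the upper ray; by complex conjugation symmetry, $\arg f(re^{-i\theta})\in[-\theta,-t\theta]$ on the lower ray.

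To conclude, I would choose the branch of $\arg f$ that vanishes on the positive real axis (where $f>0$), giving a single-valued harmonic function $h$ on $S_\theta$. This $h$ is bounded by the boundary analysis above together with the asymptotics $f(w)\sim tw$ near $0$ and $f(w)\sim w^t$ near $\infty$. The Phragm\'en--Lindel\"of maximum principle then gives $|h|\le\theta$ throughout $S_\theta$, and since $h$ vanishes on the positive real axis, the strong maximum principle upgrades this to strict inequality in the interior; hence $f(S_\theta)\subset S_\theta$. The main obstacle is the boundary step: one must verify that all tangent vectors along each boundary ray lie in a common convex cone of angular opening strictly less than $\pi$, which is precisely where the hypotheses $\theta<\pi$ and $t\in(0,1)$ combine.
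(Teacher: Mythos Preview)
Your argument is correct, but it takes a substantially different route from the paper's. The paper simply invokes the Schwarz--Pick lemma: the map $z\mapsto z^t$ is a holomorphic self-map of the doubly-slit plane $\C_{(0,1)}:=\C\setminus(\R\setminus(0,1))$ (this is a two-line check: $z^t$ avoids $(-\infty,0]$ because $\arg z^t=t\arg z\in(-\pi,\pi)$, and avoids $[1,\infty)$ because positive reals map to $(0,1)$), and it sends $(0,1)$ into $(0,1)$. Since $D_\theta$ is exactly the set of points at hyperbolic distance less than a fixed constant (depending on $\theta$) from $(0,1)$ in $\C_{(0,1)}$, the Schwarz--Pick contraction of the hyperbolic metric immediately gives $D_\theta\mapsto D_\theta$. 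This is Sullivan's ``Poincar\'e neighbourhood'' argument, standard in real one-dimensional dynamics.

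Your approach --- linearizing $D_\theta$ to the sector $S_\theta$ via the M\"obius map $z\mapsto 1/z-1$ and then showing $f(w)=(1+w)^t-1$ preserves $S_\theta$ by a tangent-cone computation on the boundary rays followed by Phragm\'en--Lindel\"of for $\arg f$ --- is more hands-on and avoids the hyperbolic-geometry identification of $D_\theta$. The boundary step works because the opening $(1-t)\theta$ of your cone $C$ is strictly less than $\pi$, and the Phragm\'en--Lindel\"of step is justified since $h=\arg f$ is bounded on $S_\theta$ (as you note, by the asymptotics $f(w)\sim tw$ near $0$ and $f(w)\sim w^t$ near $\infty$, plus continuity on compact pieces). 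The trade-off: the paper's proof is a one-liner once the hyperbolic framework is in place, while yours is self-contained but requires assembling several analytic tools.
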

\begin{proof} This is a well-known consequence of the Schwarz lemma, due to Sullivan.
\end{proof}

When $\angle 01z$ is much smaller than $\angle 10z$, we have the following improved estimate.
\begin{lemma} \label{lem:pbtriangle}
For any $\eps>0$, there is $\delta>0$  such that the following holds.
For $z\in D_{\theta}$ with $\theta\in (0, \pi/2]$ and $\angle 01z<\delta\theta$ and for any $0<t<1$, we have $\angle 01z^t< \eps \theta.$
\end{lemma}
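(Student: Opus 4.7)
The plan is to pass to the Möbius coordinate $w := z/(1-z)$, which maps $D_\theta$ conformally onto the sector $S_\theta = \{|\arg w|<\theta\}$, sending $0\mapsto 0$ and $1\mapsto\infty$. Since $1+w = 1/(1-z)$, we have $|\arg(1-z)| = |\arg(1+w)|$; and an analogous computation yields $1+w_t = (1+w)^t/((1+w)^t-w^t)$, giving the key identity
\[
\arg(1+w_t) \;=\; t\arg(1+w) \;-\; \arg\bigl((1+w)^t - w^t\bigr).
\]
Because $|t\arg(1+w)|<\delta\theta$, the lemma reduces to bounding the second term on the right.

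To bound $|\arg((1+w)^t - w^t)|$ sharply near $w\in\R_+$, I would use the integral representation $(1+w)^t - w^t = t\int_0^1 (w+\tau)^{t-1}\,d\tau$ and expand to first order in $v := \Im w$ around real $w$. A short computation then produces, after simplification,
\[
\arg(1+w_t) \;\approx\; \frac{t\,v\,R^t}{(1+R)\bigl((1+R)^t - R^t\bigr)}, \qquad R := |w|.
\]
Combining this with the two available constraints on $v$ --- namely $|v/(1+R)|\lesssim \delta\theta$ from the hypothesis and $|v|\le R\tan\theta$ from $w\in S_\theta$ --- yields the two complementary bounds
\[
|\arg(1+w_t)| \;\lesssim\; \delta\theta\cdot G(R,t) \quad \text{and} \quad |\arg(1+w_t)| \;\lesssim\; tR^t\,\theta,
\]
where $G(R,t) := tR^{t-1}/((1+R)^t - R^t)$.

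The main technical obstacle is that $G(R,t)\to\infty$ as $R\to 0$ (corresponding to $z\to 0$), so the first bound fails uniformly; happily, in exactly that regime the second bound becomes decisive, since $\max_{t\in(0,1)} tR^t = 1/(e|\log R|)$ is small for small $R$. The proof is therefore completed by a case split: for $R<\delta$, the second estimate gives $|\arg(1+w_t)| \le \theta/(e|\log\delta|)$; for $R\ge\delta$, the function $G(R,t)$ is bounded on the admissible set by some $G_{\max}(\delta)$, with $\delta\cdot G_{\max}(\delta)$ again of order $1/(e|\log\delta|)$ at the worst point, so the first estimate gives $|\arg(1+w_t)|\le G_{\max}(\delta)\,\delta\theta$. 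Choosing $\delta=\delta(\eps)$ small enough, essentially $\delta = e^{-C/\eps}$ for an absolute constant $C$, makes both quantities at most $\eps\theta/2$, which concludes the argument.

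The first-order expansions above must of course be complemented by rigorous remainder bounds; these can be controlled because the integrand $(w+\tau)^{t-1}$ stays in a convex sector of opening $2(1-t)\theta \leq \pi$, so quadratic errors in $v$ can be absorbed using $|v|\le R\tan\theta$. As a cleaner alternative, the entire argument admits a compactness / normal-family formulation: assume the conclusion fails along a sequence $(\theta_n, z_n, t_n)$ with $\delta_n\to 0$, extract a limit, and derive a contradiction by separately handling the cases where $z_n$ converges to an interior point of $(0,1)$, to $0$, or to $1$. The only delicate case is when simultaneously $\theta_n\to 0$ and $z_n$ approaches an endpoint, where rescaling through the variable $w$ makes the limiting behaviour tractable.
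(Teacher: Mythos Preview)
Your route via the M\"obius coordinate $w=z/(1-z)$ is genuinely different from the paper's, and the outline is plausible, but what you have written is a sketch rather than a proof. The first-order expansion in $v=\Im w$ needs remainder control that you explicitly postpone; there is also a slip in your displayed approximation (the numerator should carry $R^{t-1}$, not $R^t$ --- your later definition of $G(R,t)$ and the two bounds are in fact consistent with the corrected exponent, so this looks like a transcription error). The compactness alternative is only gestured at. With work these gaps can likely be closed, but the resulting argument would be considerably longer than necessary.

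The paper's proof is much shorter and entirely elementary, avoiding both the change of variable and any asymptotic expansion. Write $z=re^{i\alpha}$ with $\alpha\in(0,\theta)$ and set $\beta=\angle 01z$, $\beta'=\angle 01z^t$. The sine rule in the triangle $0,1,z$ gives $r=\sin\beta/\sin(\alpha+\beta)$. If $\alpha+\beta<\eps\theta$ one is done at once by Lemma~\ref{lem:Schwarz}. Otherwise $\alpha+\beta\ge\eps\theta$, and then the hypothesis $\beta<\delta\theta$ forces $r$ to be as small as one likes (say $r<1/K$) once $\delta$ is small. Since $z^t=r^te^{it\alpha}$, one reads off directly
\[
\tan\beta' \;=\; \frac{r^t\sin(t\alpha)}{1-r^t\cos(t\alpha)} \;\le\; \frac{t}{K^t-1}\,\alpha \;<\;\eps\theta,
\]
after choosing $K$ large enough that $\sup_{0<t<1} t/(K^t-1)<\eps$. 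The key difference from your argument is the case split: the paper splits on whether the \emph{total} angle $\alpha+\beta$ is small (so Schwarz applies) versus bounded below (so $r=|z|$ is small), rather than on $|w|$. This is exactly what lets the sine rule and the explicit formula for $\tan\beta'$ do all the work, with no expansions and no uniformity issues in $\theta$.
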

\begin{proof} Write $z=re^{i\alpha}$ where $r>0$ and $\alpha\in (0, \theta)$ and write $\alpha'=t\alpha$ and $\beta'=\angle 01z^t$. By assumption, $\alpha+\beta\le \theta$. By the sine theorem,
$$r=\frac{\sin \beta}{\sin (\alpha+\beta)}$$
and $$r^t=\frac{\sin \beta'}{\sin (t\alpha +\beta')}.$$

If $\alpha+\beta<\eps\theta$ then by Lemma~\ref{lem:Schwarz}, $\alpha'+\beta'\le \alpha+\beta<\eps \theta.$ Assume now $\alpha+\beta\ge \eps \theta$.
Let $K>0$ be a large constant such that
$$\frac{t}{K^t-1}<\eps \text{ for any } 0<t<1.$$
Assume $\beta<\delta\theta$ for $\delta$ small. Then $r<1/K$. Thus
$$\tan\beta'=\frac{r^t \sin t\alpha}{1-r^t\cos t \alpha} \le \frac{tr^t}{1-r^t} \alpha\le \frac{t}{K^t-1}\alpha<\eps \theta.$$
\end{proof}

\begin{lemma} \label{lem:pbangle}
Let $\varphi_\lambda$ be a $\theta$-regular motion with $\theta\in (0, \pi/10]$ and let $\psi_\lambda$ be its lift. For $x,y\in P$ so that $xy\ge 0$
let $x_\lambda=\psi_\lambda(x)$, $y_\lambda=\psi_\lambda(y)$, $u_\lambda=\varphi_\lambda(f(x))$, $v_\lambda=\varphi_\lambda(f(y))$ and $c_\lambda=\varphi_\lambda(f(0))$.

For any $\eps>0$ there is $\ell_0$ and $\delta>0$ such that if $\ell>\ell_0$ then the following hold.
\begin{enumerate}
\item If $f(x)\le 0\le f(y)$ then $\angle 0x_\lambda y_\lambda \ge \pi-\eps \theta$ for all $\lambda$.
\item Let $0<f(x)<f(y)$. Then (i) $\angle 0x_\lambda y_\lambda \ge \angle 0 u_\lambda v_\lambda -\frac{8\theta}{\ell}$. If, moreover,
$c_\lambda\in -S_{\theta_1}$ and $u_\lambda,v_\lambda\in S_{\theta_1}$ for some $\theta_1\in (0, 4\theta/\ell]$ then (ii) $x_\lambda,y_\lambda\in \pm S_{\theta_1/\ell}$ and $\angle 0x_\lambda y_\lambda \ge \angle 0 u_\lambda v_\lambda -2 \theta_1$.
\item Suppose $f(x)<f(y)<0$ and
$$\alpha=\pi-\min (\angle c_\lambda v_\lambda 0, \angle u_\lambda v_\lambda 0)<\delta \theta.$$
Then $$\angle 0 x_\lambda y_\lambda \ge \pi-\eps \theta.$$
\end{enumerate}
\end{lemma}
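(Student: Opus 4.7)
The strategy throughout is to translate the lower bound on $\angle 0 x_\lambda y_\lambda$ into a statement about the ratio $x_\lambda/y_\lambda$, using the identity $\angle 0 x_\lambda y_\lambda = \angle 0 (x_\lambda/y_\lambda) 1$. Since the lift satisfies $x_\lambda/y_\lambda = \sigma\bigl((u_\lambda-c_\lambda)/(v_\lambda-c_\lambda)\bigr)^{1/\ell_\pm}$ for a suitable sign $\sigma$, bounds on $\angle 0 x_\lambda y_\lambda$ will be reduced via Lemmas A.1 and A.2 to bounds on the ratio $(u_\lambda-c_\lambda)/(v_\lambda-c_\lambda)$. The three parts differ in how this reduction is performed and which of (A1), (A2), and the additional angle hypothesis is used.

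For Part (1), I first observe that by (A1) we have $u_\lambda\in -S_{4\theta/\ell}$, $v_\lambda\in S_{4\theta/\ell}$ and $c_\lambda\in -S_{4\theta/\ell}$, and by (A2) applied to the pair $f(0),f(x)$ (both negative, with $|c|>|f(x)|$ since $c$ is the global minimum) I get $u_\lambda/c_\lambda\in D_\theta$ with modulus at most $1$. Writing $u_\lambda-c_\lambda=c_\lambda\bigl((u_\lambda/c_\lambda)-1\bigr)$ then places $u_\lambda-c_\lambda$ in a narrow sector around the positive real axis; meanwhile $v_\lambda-c_\lambda$ is a sum of two points in $S_{4\theta/\ell}$, hence itself lies in $S_{4\theta/\ell}$, and the modulus bound $|u_\lambda-c_\lambda|\le |c_\lambda|\le|v_\lambda-c_\lambda|$ puts $(u_\lambda-c_\lambda)/(v_\lambda-c_\lambda)$ in a small angular neighborhood of the segment $[0,1]$. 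Lemma A.2 (with $t=1/\ell_\pm$) then gives the lifted ratio in $D_{\varepsilon\theta}$ for $\ell\ge\ell_0$ large enough, which is the desired conclusion.

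For Part (2)(i), from $\angle 0 x_\lambda y_\lambda=\angle 0(x_\lambda/y_\lambda)1$ and Lemma A.1 applied to $z=(u_\lambda-c_\lambda)/(v_\lambda-c_\lambda)$ with $t=1/\ell_+$, I obtain $\angle 0 x_\lambda y_\lambda\ge\angle 0(u_\lambda-c_\lambda)(v_\lambda-c_\lambda)$. Comparing this with $\angle 0 u_\lambda v_\lambda$: translating the triangle $0,u_\lambda,v_\lambda$ by $-c_\lambda$ sends $u_\lambda\mapsto u_\lambda-c_\lambda$ and $0\mapsto -c_\lambda$, so the angle at $u_\lambda$ subtended by $0$ and $v_\lambda$ differs from the angle at $u_\lambda-c_\lambda$ subtended by $0$ and $v_\lambda-c_\lambda$ by exactly $\angle 0 u_\lambda c_\lambda$. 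Since $u_\lambda\in S_{4\theta/\ell}$ and $c_\lambda\in -S_{4\theta/\ell}$, we have $\angle c_\lambda 0 u_\lambda\ge\pi-8\theta/\ell$, and hence $\angle 0 u_\lambda c_\lambda\le 8\theta/\ell$ by summing angles of the triangle $0u_\lambda c_\lambda$. For (ii), the same argument with $\theta_1$ replacing $4\theta/\ell$ yields the improved $2\theta_1$ bound; moreover, the assumption $u_\lambda,v_\lambda\in S_{\theta_1}$, $c_\lambda\in -S_{\theta_1}$ implies $u_\lambda-c_\lambda\in S_{\theta_1}$, hence $x_\lambda=(u_\lambda-c_\lambda)^{1/\ell_+}\in S_{\theta_1/\ell_+}$ (and likewise for $y_\lambda$, with the appropriate sign if $x,y<0$).

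For Part (3), where all three of $u_\lambda,v_\lambda,c_\lambda$ lie in $-S_{4\theta/\ell}$, the hypothesis $\alpha<\delta\theta$ says that $v_\lambda$ sits very close to both segments $[0,c_\lambda]$ and $[0,u_\lambda]$. I would translate this into the statement that $v_\lambda/c_\lambda$ and $v_\lambda/u_\lambda$ lie in small angular neighborhoods of $[0,1]$ with $\angle 0\cdot 1$ of order $\delta\theta$, so that $u_\lambda$ and $v_\lambda$ are essentially positive real multiples of $c_\lambda$ up to errors controlled by $\theta/\ell$. It then follows that $u_\lambda-c_\lambda$ and $v_\lambda-c_\lambda$ are nearly positively colinear (both pointing along $-c_\lambda$), and the modulus comparison $|u_\lambda-c_\lambda|<|v_\lambda-c_\lambda|$ (which follows from $|f(x)|>|f(y)|$ and the alignment) places $(u_\lambda-c_\lambda)/(v_\lambda-c_\lambda)$ in a $\delta'\theta$-angular neighborhood of $[0,1]$. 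Lemma A.2 applied with $\delta$ chosen appropriately for the desired $\varepsilon$ then yields the conclusion. The main obstacle is exactly this step: one must extract quantitative information about the directions of $u_\lambda-c_\lambda$ and $v_\lambda-c_\lambda$ from the hypothesis $\alpha<\delta\theta$, and verify the sharper proximity-to-$[0,1]$ condition needed to apply Lemma A.2 and obtain an $\varepsilon\theta$ (rather than merely $\theta$) bound on the lifted angle.
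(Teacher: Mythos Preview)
Your overall framework---pass to the ratio $z=(u_\lambda-c_\lambda)/(v_\lambda-c_\lambda)$, use that $\angle 0z1=\angle c_\lambda u_\lambda v_\lambda$ and $\angle 01z=\angle c_\lambda v_\lambda u_\lambda$, and then apply Lemmas~A.1--A.2 to $z\mapsto z^{1/\ell_\pm}$---is exactly the paper's approach, and your treatment of Part~(2) is essentially the paper's proof. The difficulties are in Parts~(1) and~(3), where you try to control $\arg z$ and $|z|$ separately rather than the two angles $\angle 0z1$ and $\angle 01z$ directly; this is where your argument develops gaps.

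In Part~(1), your computation of $\arg(u_\lambda-c_\lambda)$ via $u_\lambda/c_\lambda\in D_\theta$ only gives $u_\lambda-c_\lambda\in S_{\theta+4\theta/\ell}$, so the ratio $z$ lies in a sector of half-angle $\approx\theta$, not $\approx\delta\theta$. Combined with a bound $|z|\le 1$ (which, incidentally, your modulus chain $|u_\lambda-c_\lambda|\le|c_\lambda|\le|v_\lambda-c_\lambda|$ does not actually establish), this is \emph{not} enough for Lemma~A.2: one needs $\angle 01z<\delta\theta$, and for $z$ close to $1$ inside your sector the angle $\angle 01z$ can be of order $1$. The paper instead bounds the two relevant angles directly. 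From (A1) all of $u_\lambda,c_\lambda\in -\overline{S_{4\theta/\ell}}$ and $v_\lambda\in\overline{S_{4\theta/\ell}}$, so the vectors $-u_\lambda$, $v_\lambda-u_\lambda$, $-v_\lambda$, $c_\lambda-v_\lambda$ all lie in sectors of half-angle $4\theta/\ell$; this forces $\angle 0u_\lambda v_\lambda$, $\angle 0v_\lambda u_\lambda$, $\angle c_\lambda v_\lambda 0$ each to be $O(\theta/\ell)$. Then from (A2) one has $\angle c_\lambda u_\lambda 0\ge\pi-\theta$, hence
\[
\angle c_\lambda u_\lambda v_\lambda\ \ge\ \angle c_\lambda u_\lambda 0-\angle 0u_\lambda v_\lambda\ \ge\ \pi-5\theta,
\qquad
\angle c_\lambda v_\lambda u_\lambda\ \le\ \angle c_\lambda v_\lambda 0+\angle 0v_\lambda u_\lambda\ =\ O(\theta/\ell),
\]
so $z\in D_{5\theta}$ with $\angle 01z<\delta\cdot 5\theta$ once $\ell$ is large, and Lemma~A.2 applies. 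The point you are missing is that the smallness comes from $\theta/\ell$, i.e.\ from (A1), not from (A2).

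In Part~(3) you correctly identify the obstacle; here is how the paper resolves it. From (A2) one has $\angle c_\lambda u_\lambda 0\ge\pi-\theta$, and the hypothesis $\angle u_\lambda v_\lambda 0\ge\pi-\alpha$ forces (by the triangle angle sum in $0u_\lambda v_\lambda$) $\angle v_\lambda u_\lambda 0\le\alpha$; hence $\angle c_\lambda u_\lambda v_\lambda\ge\pi-\theta-\alpha\ge\pi-2\theta$. For the other angle, use that for any three rays from $v_\lambda$ one has $\angle c_\lambda v_\lambda u_\lambda\le 2\pi-\angle c_\lambda v_\lambda 0-\angle 0v_\lambda u_\lambda\le 2\pi-2(\pi-\alpha)=2\alpha$. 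Now Lemma~A.2 (applied with $2\theta$ in place of $\theta$ and $\delta$ chosen so that $2\alpha<\delta\cdot 2\theta$) gives $\angle 0y_\lambda x_\lambda<\varepsilon\theta$, and since $\angle x_\lambda 0y_\lambda<8\theta/\ell$ this yields $\angle 0x_\lambda y_\lambda\ge\pi-\varepsilon\theta$ for $\ell$ large. The ``$2\pi$-bound'' at the vertex $v_\lambda$ is the quantitative step you were looking for.
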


\begin{proof} Note that $\triangle 0 xy$ is the image of $\triangle c_\lambda u_\lambda v_\lambda$ under an appropriate branch of $z\mapsto (z-c_\lambda)^t$.  Since $\angle xoy<8\theta/\ell$, an upper bound on $\angle oyx$ implies a lower bound on $\angle oxy$.

(1) In this case, we have $u_\lambda \in -\overline{S_{4\theta/\ell}}$ and $v_\lambda \in \overline{S_{4\theta/\ell}}$, so
$$\angle 0u_\lambda v_\lambda \le  4\theta/\ell,$$
and $$\angle 0v_\lambda u_\lambda \le 4\theta/\ell.$$
In particular,
$$\angle c_\lambda u_\lambda v_\lambda \ge \angle c_\lambda u_\lambda 0 - \angle 0u_\lambda v_\lambda \ge \pi-\theta - 4\theta/\ell\ge \pi-5 \theta.$$
By Lemma~\ref{lem:pbtriangle}, the statement follows.

(2) In this case,
$$\angle c_\lambda u_\lambda v_\lambda \ge \angle 0 u_\lambda v_\lambda -\angle 0 u_\lambda c_\lambda\ge  \angle 0 u_\lambda v_\lambda - 8\theta /\ell.$$
Thus by Lemma~\ref{lem:Schwarz}, the conclusion (i) follows; (ii) is similar.

(3) In this case,
$$\angle c_\lambda u_\lambda v_\lambda \ge \angle c_\lambda u_\lambda 0-\angle v_\lambda u_\lambda 0\ge \pi-\theta-\alpha\ge \pi-2\theta$$
and $$\angle c_\lambda v_\lambda u_\lambda\le 2\pi-(\angle c_\lambda v_\lambda 0+ \angle 0v_\lambda u_\lambda) \le 2\alpha.$$
So the conclusion follows from Lemma~\ref{lem:pbtriangle}.
\end{proof}

Now suppose that we have a sequence of $\theta$-regular holomorphic motions $h_\lambda^{(i)}$ of $P$, $i=0,1,\ldots, q-1$  over the same marked domain $(\Omega,0)$, such that $h_\lambda^{(i)}$ is a lift of $h_\lambda^{(i-1)}$ for all $1\le i<q$. Then $h^{(q)}_\lambda$, lift of $h_\lambda^{(q-1)}$ is well-defined and satisfies the condition (A1) with the same constant $\theta$.
For each $0\le i\le q$, $\lambda\in\Omega$ and $x,y\in P$ so that $0<|x|<|y|$ and $xy>0$, let
\begin{align*}
& \theta_\lambda^i(x,y)=\pi-\\
&\inf\{\angle 0h_\lambda^{(i)}(z_1) h_\lambda^{(i)}(z_2): z_1, z_2\in P, 0<|z_1|\le |x|<|y|\le |z_2|, \ \ xz_1>0, xz_2>0\}\\
\ge & \pi-\angle 0h^{(i)}_\lambda(x)h_\lambda^{(i)}(y).
\end{align*}
Furthermore,
given  any $x,y\in P$, $xy>0$ (but not necessarily $|x|<|y|$), denote
$$\hat{\theta}^i(x,y)=\theta^i(x\wedge y, x\vee y)$$ where $x\wedge y=x/|x|\min(|x|,|y|)$ and $x\vee y=x/|x|\max(|x|,|y|)$.

\begin{lemma}\label{lem:pbangle1}
Consider $0\le i<q$, $x,y\in P$ where $xy>0$ and $\lambda\in\Omega$. For any $\eps>0$ there is $\delta>0$ and $\ell_0>0$ such that if $\ell\ge \ell_0$, then the following hold.
\begin{enumerate}
\item If $f(x)\le 0\le f(y)$ then $$\hat{\theta}^{i+1}_\lambda(x,y)\le \eps\theta.$$
\item Let $r\ge 1$ be such that $i+r\le q$. If $0<f^j(x)<f^j(y)$ for all $1\le j\le r$, then $$\hat{\theta}_\lambda^{i+r}(x,y)\le \hat{\theta}_\lambda^i(f^r(x),f^r(y))+\eps\theta.$$
\item If $f(x)<f(y)<0$ and $\hat{\theta}_\lambda^i(f(x), f(y))<\delta \theta$,
then $$\hat{\theta}_\lambda^{i+1}(x,y)\le 4\max (\eps\theta, \hat{\theta}_\lambda^i(f(x), f(y))).$$
\end{enumerate}
\end{lemma}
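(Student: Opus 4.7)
The proof of each part begins by picking an arbitrary valid pair $(z_1,z_2) \in P \times P$ contributing to the infimum defining $\hat{\theta}^{i+1}_\lambda(x,y)$ (or $\hat{\theta}^{i+r}_\lambda(x,y)$ for part (2)), and transferring it to a valid pair at an earlier level via $f$. The piecewise monotonicity of $f$ on $(-\infty,0)$ and $(0,\infty)$, together with the sign-and-magnitude constraints in the definition of $\theta^\bullet_\lambda$, controls how these constraints translate under $f$: on the positive branch, $f$ preserves the ordering $|z_1| \le |x\wedge y| < |x\vee y| \le |z_2|$, while on the negative branch $f$ reverses it. Once the transfer is established, the appropriate case of Lemma~\ref{lem:pbangle} gives a pointwise lower bound on $\angle 0\,h^{(i+1)}_\lambda(z_1)\,h^{(i+1)}_\lambda(z_2)$, and taking the infimum yields the desired bound on $\hat{\theta}$.

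Part (1) is essentially immediate: if $f(x) \le 0 \le f(y)$ then for every valid $(z_1,z_2)$ one has $f(z_1) \le 0 \le f(z_2)$ by monotonicity of $f$ on the common branch of $x\wedge y$, and Lemma~\ref{lem:pbangle}(1) applied to the lifted pair gives the uniform bound $\pi - \eps\theta$.

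Part (2) is the most delicate. For a valid pair at level $i+r$, monotonicity on the positive branch (where $f^j(x),f^j(y),f^j(z_1),f^j(z_2)$ all lie for $1\le j\le r$) shows that $(f^r(z_1),f^r(z_2))$ contributes to the infimum defining $\hat\theta^i_\lambda(f^r(x),f^r(y))$. Now iterate Lemma~\ref{lem:pbangle}(2)(ii): the per-step angle loss is $2\theta_1$, where $\theta_1$ is the ambient sector parameter, and the conclusion $x_\lambda,y_\lambda \in \pm S_{\theta_1/\ell}$ causes $\theta_1$ to shrink by $1/\ell$ at each lift. Thus the cumulative error telescopes into a geometric series $\sum_j 8\theta/\ell^{j+1} \le 8\theta/(\ell-1)$, which is $\le \eps\theta$ for $\ell$ sufficiently large.

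For part (3), to verify the hypothesis $\alpha < \delta\theta$ of Lemma~\ref{lem:pbangle}(3) for an arbitrary valid pair $(z_1,z_2)$ at level $i+1$, observe that both $(f(z_2), f(z_1))$ and $(f(z_2), c)$ are valid pairs for $\hat\theta^i_\lambda(f(x),f(y))$: the first because $f$ reverses the absolute-value ordering on the negative branch, and the second because $c = f(0)$ is the global minimum of $f$, giving $|c| \ge |f(x\wedge y)|$. Both associated angles are therefore bounded below by $\pi - \hat\theta^i_\lambda(f(x),f(y)) > \pi - \delta\theta$, so $\alpha \le \hat\theta^i_\lambda(f(x),f(y))$. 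The factor $4$ in the conclusion emerges from sharpening Lemma~\ref{lem:pbtriangle} to a linear estimate $\angle 0\,1\,z^t \le C\,\angle 0\,1\,z$ for small input angle with $C \le 4$, which is visible from the estimate $\tan\beta' = O(tr^t\alpha/(1-r^t))$ used in its proof.

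The main obstacle is the sector-shrinking claim in the iteration of part (2). While Lemma~\ref{lem:pbangle}(2)(ii) provides $x_\lambda,y_\lambda\in\pm S_{\theta_1/\ell}$, using this at the next iteration requires the corresponding sector constraint on $c_\lambda = h^{(i+j)}_\lambda(c)$ to shrink in parallel, which is not supplied by (A1) alone. Tracking the contraction of $h^{(j)}_\lambda(c)$ itself along positive stretches of the critical orbit is precisely what forces the Main Lemma's restriction to at most $L$ iterates on the negative branch.
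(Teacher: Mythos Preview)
Your overall strategy—fix a valid pair $(z_1,z_2)$, push it forward by $f$, invoke the matching case of Lemma~\ref{lem:pbangle}—is exactly the paper's, and Part~(1) is fine. But Parts~(2) and~(3) each have a genuine gap, and your last paragraph misdiagnoses the role of the $L$ bound.

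In Part~(2), the parenthetical assertion that $f^j(z_1)>0$ for all $1\le j\le r$ is false. The hypothesis gives only $f^j(x),f^j(y)>0$; since $|z_1|\le|x|$ we have $f(z_1)\le f(x)$, which may be $\le 0$, and even if $f(z_1)>0$ positivity can fail before step $r$. The paper introduces $r_1\in\{1,\dots,r\}$ maximal with $0<f^j(z_1)\le f^j(x)<f^j(y)\le f^j(z_2)$ for $1\le j\le r_1$ and splits cases: if $r_1<r$ then $f^{r_1+1}(z_1)\le 0<f^{r_1+1}(z_2)$, so Lemma~\ref{lem:pbangle}(1) applied at level $i+r-r_1$ already gives the $\eps\theta$ bound; if $r_1=r$ the telescoping argument goes through. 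Part~(3) needs the analogous split on the sign of $f(z_2)$: since $|z_2|\ge|y|$, $f(z_2)\ge f(y)$ may be positive even though $f(y)<0$, and that case is again disposed of by Lemma~\ref{lem:pbangle}(1).

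On your final paragraph: you are right that the iterated use of Lemma~\ref{lem:pbangle}(2)(ii) requires the sector for $h^{(\cdot)}_\lambda(c)$ to shrink in step, but the paper resolves this \emph{within} the proof of Part~(2), not via the $L$ restriction. The key observation is the chain $0<f^{r_1}(z_2)\le f^{r_1-1}(f^2(0))<\cdots<f^2(0)$, which forces $f^2(0),\dots,f^{r_1+1}(0)>0$; hence $c=f(0)$ itself has positive $f$-image throughout the stretch, and the same pointwise sector-shrinking mechanism applies to it, giving $h_\lambda^{(k+i+r-r_1)}(f(0))\in -S_{4\theta/\ell^{k+1}}$ inductively. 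The bound $\#\{j:i_j=-1\}\le L$ enters only through Part~(3): each negative step multiplies the running angle bound by a factor $\sim4$, and in the proof of the Main Lemma this is applied at most $L+1$ times.
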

\begin{proof}
Note that $f(x)<f(y)$ implies $|x|<|y|$.

(1) For each $0<|z_1|\le |x|<|y|\le |z_2|$ as in the definition of $\theta_\lambda^i(x,y)$ we have $f(z_1)\le 0$ and $f(z_2)\ge 0$.  So by Lemma~\ref{lem:pbangle} (1), (applying to $\varphi=h^{(i)}$ and $\psi=h^{(i+1)}$), $\angle 0h_\lambda^{(i+1)}(z_1)h_\lambda^{(i+1)}(z_2)\ge \pi-\eps\theta$.  Thus the statement holds.

(2) Consider $0<|z_1|\le |x|<|y|\le |z_2|$ so that $z_1z_2>0$. Then $f(z_2)>0$. If $f(z_1)\le 0$, then by Lemma~\ref{lem:pbangle} (1),
$\angle 0h_\lambda^{(i+r)}(z_1)h_\lambda^{(i+r)}(z_2)\ge \pi-\eps\theta$.
Assume $f(z_1)>0$ and let $r_1\in \{1,\cdots,r\}$ be maximal such that $0<f^j(z_1)\le f^j(x)<f^j(y)\le f^j(z_2)$ for all $1\le j\le r_1$.
Notice that then
$$0<f^{r_1}(z_2)\le f^{r_1-1}(f^2(0))<f^{r_1-2}(f^2(0))<\cdots<f^2(0).$$
Let us show that for all $k\in \{0,\cdots,r_1-1\}$,
\begin{equation}\label{r1}
h_\lambda^{(k+i+r-r_1)}(f(0))\in -S_{4\theta/\ell^{k+1}},
\end{equation}
and
\begin{equation}\label{r2}
h_\lambda^{(k+i+r-r_1)}(f^{r_1-k}(z_1)), h_\lambda^{(k+i+r-r_1)}(f^{r_1-k}(z_2))\in S_{4\theta/\ell^{k+1}}.
\end{equation}
Indeed, this holds for $k=0$ as $h_\lambda^{(i+r-r_1)}$ is $\theta$-regular. Now, for $1\le k\le r_1-1$, (\ref{r1})-(\ref{r2}) follows by a successive application of the second part of Lemma~\ref{lem:pbangle} (2).
This proves (\ref{r1})-(\ref{r2}).
In turn, using (\ref{r1})-(\ref{r2}) and again applying successively Lemma~\ref{lem:pbangle} (2),
\begin{align*}
&\angle 0h^{(i+r)}_\lambda(z_1) h^{(i+r)}_\lambda(z_2)> \angle 0h^{(i+r-r_1)}_\lambda(f^{r_1}(z_1)) h^{(i+r-r_1)}_\lambda(f^{r_1}(z_2))-2\sum_{k=0}^\infty \frac{4\theta}{\ell^{k+1}}=\\
&\angle 0h^{(i+r-r_1)}_\lambda(f^{r_1}(z_1)) h^{(i+r-r_1)}_\lambda(f^{r_1}(z_2))-\frac{8\theta}{\ell-1}.
\end{align*}
Consider two cases. If $r_1<r$, then $f^{r_1+1}(z_1)\le 0$ and $f^{r_1+1}(z_2)>0$ and by Lemma~\ref{lem:pbangle} (1),
$$\angle 0h^{(i+r-r_1)}_\lambda(f^{r_1}(z_1)) h^{(i+r-r_1)}_\lambda(f^{r_1}(z_2))\ge \pi-{\eps}\theta$$
for any $\ell$ large enough.
If $r_1=r$,
$$\angle 0h^{(i)}_\lambda(f^{r}(z_1)) h^{(i)}_\lambda(f^{r}(z_2))\ge \pi-\theta_\lambda^i(f^r(x),f^r(y)).$$
In any case,
$$\angle 0h^{(i+r)}_\lambda(z_1) h^{(i+r)}_\lambda(z_2)>\pi-\theta_\lambda^i(f^r(x),f^r(y))-\eps\theta$$
provided $\ell$ is large enough.
Thus the statement holds.

(3) Notice that in this case $\hat{\theta}^i_\lambda(f(x),f(y))=\theta^i_\lambda(f(y),f(x))$. Consider $0<|z_1|\le |x|<|y|\le |z_2|$ so that $z_1z_2>0$. If $f(z_2)>0$ then by Lemma~\ref{lem:pbangle} (1),
$\angle 0h_\lambda^{(i+1)}(z_1)h_\lambda^{(i+1)}(z_2)\ge \pi-\eps\theta$.
Assume $f(z_2)<0$. Then $0>f(z_2)\ge f(y)>f(x)>f(z_1)>c$. So
$$\angle h^{(i)}_\lambda(c)h^{(i)}_\lambda(f(z_2))0\ge \pi-\theta_\lambda^i(f(y), f(x))$$
and $$\angle h^{(i)}_\lambda(f(z_1)) h^{(i)}_\lambda (f(z_2))0 \ge \pi-\theta_\lambda^i (f(y), f(x)).$$
By Lemma~\ref{lem:pbangle} (3),
$$\angle 0 h^{(i+1)}_\lambda  (z_1)h^{(i+1)}_\lambda (z_2)\ge \pi- 4 \max (\theta_\lambda^i(f(y), f(x)), \eps\theta),$$
provided that $\theta^i_\lambda(f(y), f(x))/\theta$ is small enough and $\ell$ is large enough.
\end{proof}

\begin{proof}[Completion of proof of the Main Lemma]
It is easy to check that $h^q_\lambda$ satisfies the condition (A1) with $S_{4\theta/\ell}$ replaced by $S_{2\theta/\ell}$. It remains to check that  for $x,y\in P$, $0<|x|<|y|$ and $xy>0$ implies $\angle 0 h_\lambda^q(x) h_{\lambda}^q(y)>\pi-\theta/2$. Since the critical point is periodic, there is a minimal integer $p$, less than the period $q$ of the critical point, such that $$f^p([x,y])\ni 0.$$
Let us define $p-1=m_0>m_1>\cdots>m_{j_0-1}>m_{j_0}=0$ inductively as follows. Given $m_i$, let $m_{j+1}\in \{0,1\cdots,m_j-1\}$ be the maximal so that $f^{m_{j+1}}([x,y])\subset \R^-$ if it exists and $m_{j+1}=0$ otherwise.
Note that $j_0\le L+1$.
Let
$$\kappa_{m_j} =\hat\theta_\lambda^{q-m_j} (f^{m_j}(x), f^{m_j}(y))/\theta, j=0,1,\ldots, j_0.$$
Fix $\eps>0$ small. Assume that $\ell$ is large.
Then by Lemma~\ref{lem:pbangle1} (1),
$$\kappa_{m_0}=\kappa_{p-1}\le \eps.$$
For each $0<j\le j_0$, by Lemma~\ref{lem:pbangle1} (2) and (3),
$$\kappa_{m_{j+1}}\le 4 \kappa_{m_j} + 4\eps$$
provided that $\kappa_{m_j}$ is small enough and $\ell$ is large enough.
Therefore, provided that $\ell$ is large enough, we have $\kappa_0<1/2$. It follows that
$$\angle 0 h^{(q)}_\lambda(x) h_\lambda^{(q)}(y) \ge \pi-\kappa_0\theta \le \pi-\theta/2.$$
\end{proof}

\section{Families without the lifting property}

In this appendix we will give a few examples of families for which the lifting property does not hold.

\subsection{Remark on the lifting property for the flat family}
Using the notations of Section~\ref{subsec:flatcritical}
let $b=2(e\ell)^{1/\ell}$, $c=-\beta=-\ell^{1/\ell}$ and $f=F_c$ so that $0\mapsto c\mapsto \beta\mapsto \beta$ by $f$. By Remark~\ref{rem:flat} the transversality fails for $(f,F,{\bf p})$. (It can be also checked directly that
the function $R(w)=F^2_w(w)-F_w(w)$ vanished at $w=c$, not identically zero but $R'(c)=0$.)
Therefore, by the Main Theorem, this triple does not have the lifting property.

The purpose of this remark is to give a more direct argument that the lifting property does  not hold  for $(f,F,{\bf p})_W$. So let
us assume by contradiction  that $(f,F,{\bf p})$ has the lifting property. Fix $r>0$ small so that the function $E(z):=\exp\{-z^{-\ell}\}$ is univalent
in a disk $B(\beta, r)\subset U^+$.
By assumption, given an arbitrary holomorphic motion $h^0_\lambda$ of $\{c,\beta\}$ there exist $\epsilon>0$ and a sequence of holomorphic motions $\{h^{(k)}_\lambda\}_{k=0}^\infty$ of $\{c,\beta\}$ over $\D_\epsilon$ such that for all $\lambda\in \D_\epsilon$ and all $k\ge 0$ we have
 that $-h^{(k)}_\lambda(c),h^{(k)}_\lambda(\beta)\in B(\beta,r)$. Therefore,
$$b E(-h^{(k+1)}_\lambda(c))+h^{(k)}_\lambda(c)=h^{(k)}_\lambda(\beta),$$
$$b E(h^{(k+1)}_\lambda(\beta))+h^{(k)}_\lambda(c)=h^{(k)}_\lambda(\beta)$$
and $h^{(k+1)}_\lambda(c)=-h^{(k+1)}_\lambda(\beta)$. Let us now choose $h^0_\lambda(c)=c-\lambda$ and $h^0_\lambda(\beta)=-h^0_\lambda(c)=\beta+\lambda$.
Then $h^{(k)}_\lambda(c)=-h^{(k)}_\lambda(\beta)$ holds for all $k\ge 0$.
Hence, for $a_k(\lambda):=h^{(k)}_\lambda(\beta)$ and all $\lambda\in \D_\epsilon$, $k\ge 0$,
$$b\exp\{-\frac{1}{a_{k+1}(\lambda)^\ell}\}=2a_k(\lambda).$$
On the other hand, it is easy to check that the for function $f:(0,+\infty)\to(0,+\infty)$,
$$f(x)=\frac{b}{2}\exp\{-\frac{1}{x^\ell}\},$$
we have: $f(\beta)=\beta$, $Df(\beta)=1$, $D^2f(\beta)<0$, $f: [\beta,+\infty)\to [\beta,b/2)$ is an increasing homeomorphism so that $f^k(x)\to \beta$ as $k\to +\infty$ for all $x\in [\beta,+\infty)$.
Let $U: [\beta,b/2)\to [\beta,+\infty)$ be a branch of $f^{-1}$ such that $U(\beta)=\beta$. Since $a_k(0)=\beta$ and functions $a_k$ are continuous in $\D_\epsilon$,
it follows that
$$a_{k}(\lambda)=U^k(a_0(\lambda))$$
for all $k>0$ and all $\lambda$ provided $a_0(\lambda)=\beta+\lambda\ge \beta$.
Fix $\lambda_0\in (0,\epsilon)$ so that $a_0(\lambda_0)=\beta+\lambda_0>\beta$.
It follows that $h^{(k)}_{\lambda_0}(\beta)=a_{k}(\lambda_0)=U^k(a_0(\lambda_0))\to +\infty$ as $k\to \infty$, a contradiction with the definition of the lifting property.
\subsection{Spectrum of the transfer operator and linear coordinate changes of the quadratic family}
Consider the standard holomorphic deformation of a critically finite quadratic map $(g,G,{\bf p})$,
that is, ${\bf p}(w)=0$, $G_w(z)=z^2+w$ and $g=G_{c_1}$ is so that $0$ is periodic for $g$ of period $q\ge 2$.

Given a function $\nu$ which is locally holomorphic in a neighborhood of a point $v_1\in \C\setminus \{0\}$ so that $\nu(v_1)=c_1$, let
$w=\nu(v)$ and $\varphi(w)=\varphi(\nu(v))=\nu(v)/v$.
Define
$$G^\nu_v(z)=\frac{1}{\varphi(w)}G_w(\varphi(w)z)=\frac{\nu(v)}{v}z^2+v$$
and $g_\nu=G^\nu_{v_1}$.


Then $(g_\nu, G^\nu, {\bf p})$ is a local holomorphic deformation of $g_\nu$. Denote by $\mathcal A$ respectively $\mathcal A_\nu$
the transfer operator of $(g,G,{\bf p})$ respectively of $(g_\nu,G^\nu,{\bf p})$.
Note that $1/2$ is always in the spectrum of $\mathcal A$ (see \cite{LSY}).
\begin{prop}\label{B2}
$$\det(1-\rho\mathcal A_\nu)=\frac{1-\frac{1}{2}\frac{v_1 \nu'(v_1)}{c_1}}{1-\frac{\rho}{2}}\det(1-\rho\mathcal A).$$
\end{prop}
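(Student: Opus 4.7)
The plan is to realize $\mathcal{A}_\nu$ as an explicit rank-one perturbation of $\mathcal{A}$ and then apply the matrix determinant lemma. The starting observation is that the scaling $T_v(z)=(\nu(v)/v)z$ intertwines the two families: $T_v\circ G^\nu_v = G_{\nu(v)}\circ T_v$. Consequently, given a holomorphic motion $H_\lambda$ of the critical orbit $\widetilde P=\{g_\nu^i(0)\}_{i=0}^{q-1}$ of $g_\nu$, setting $v(\lambda):=H_\lambda(v_1)$ and $h_\lambda(c_i):=T_{v(\lambda)}(H_\lambda(\widetilde c_i))$ produces a motion of $P=\{g^i(0)\}_{i=0}^{q-1}$, and (since $\mathbf{p}\equiv 0$ in both families) a direct check shows that the Thurston lift commutes with this correspondence in the weak sense $\hat h_\lambda(c_i)=T_{v(\lambda)}(\hat H_\lambda(\widetilde c_i))$ --- note that the conjugacy factor depends on the \emph{unlifted} motion $H_\lambda$, not on its lift.

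Differentiating both relations at $\lambda=0$ and writing $A:=v_1\nu'(v_1)/c_1$, $\kappa:=c_1/v_1$, $\mathbf c:=(c_0,c_1,\dots,c_{q-1})^T$, I expect to obtain
$$
\mathbf v = \kappa\,\mathbf u+\tfrac{A-1}{v_1}u_1\,\mathbf c,\qquad \hat{\mathbf v} = \kappa\,\hat{\mathbf u}+\tfrac{A-1}{v_1}u_1\,\mathbf c,
$$
where $\mathbf u,\mathbf v,\hat{\mathbf u},\hat{\mathbf v}$ are the tangent vectors of $H_\lambda,h_\lambda,\hat H_\lambda,\hat h_\lambda$ at $\lambda=0$. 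Applying $\mathcal{A}\mathbf v=\hat{\mathbf v}$ and $\mathcal{A}_\nu\mathbf u=\hat{\mathbf u}$ and eliminating $\mathbf v,\hat{\mathbf v}$ yields the rank-one identity
$$
\mathcal{A}_\nu=\mathcal{A}+\mathbf x\,\mathbf e_1^T,\qquad \mathbf x:=\tfrac{A-1}{c_1}(\mathcal{A}-I)\mathbf c,
$$
where $\mathbf e_1$ is the coordinate functional with $\mathbf e_1^T\mathbf u=u_1$.

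The decisive algebraic fact is that $\mathbf c$ itself is an eigenvector of $\mathcal{A}$ with eigenvalue $1/2$ (this is the $1/2$ eigenvalue mentioned in the paragraph preceding the statement). Indeed, the motion $h_\lambda(c_i)=(1+\lambda)c_i$ has tangent vector $\mathbf c$, and its lift is forced by $\hat h_\lambda(c_i)^2+(1+\lambda)c_1=(1+\lambda)c_{i+1}=(1+\lambda)(c_i^2+c_1)$, whence $\hat h_\lambda(c_i)=c_i(1+\lambda/2)+O(\lambda^2)$ and so $\mathcal{A}\mathbf c=\mathbf c/2$. Hence $(\mathcal{A}-I)\mathbf c=-\mathbf c/2$, giving $\mathbf x=-(A-1)\mathbf c/(2c_1)$, and $(I-\rho\mathcal{A})^{-1}\mathbf c=\mathbf c/(1-\rho/2)$. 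The matrix determinant lemma
$$
\det(I-\rho\mathcal{A}_\nu)=\det(I-\rho\mathcal{A})\bigl[1-\rho\,\mathbf e_1^T(I-\rho\mathcal{A})^{-1}\mathbf x\bigr]
$$
then reduces to computing $\mathbf e_1^T(I-\rho\mathcal{A})^{-1}\mathbf x=(1-A)/(2-\rho)$ (using $\mathbf e_1^T\mathbf c=c_1$), and a short algebraic rearrangement delivers the stated ratio of linear factors in $\rho$.

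The only step I anticipate as nontrivial is the derivation of the rank-one update: it requires tracking carefully which occurrences of the conjugacy factor $T_{v(\lambda)}$ involve the original motion $H_\lambda$ versus its lift $\hat H_\lambda$, since it is exactly this asymmetry that generates the $\mathbf x\,\mathbf e_1^T$ term. Once this formula is in hand the rest is essentially mechanical, with the eigenvector identity $\mathcal{A}\mathbf c=\mathbf c/2$ doing all the real work and collapsing the resolvent computation.
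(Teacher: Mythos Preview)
Your argument is correct and takes a genuinely different route from the paper. The paper computes $D_\nu(\rho)=\det(I-\rho\mathcal A_\nu)$ directly from the formula of Example~\ref{example1}: it writes out $L_\nu(v_n)=\bigl(\tfrac{\nu(v)}{v}\bigr)'\big|_{v=v_1}v_n^2+1$, observes $(g_\nu^n)'(v_1)=(g^n)'(c_1)$ and $v_n=c_n v_1/c_1$, and reduces the claimed identity to the telescoping relation
\[
2\Bigl(1-\tfrac{\rho}{2}\Bigr)\frac{1}{c_1}\sum_{n=1}^{q-1}\rho^{n-1}\frac{c_n^2}{(g^n)'(c_1)}=D(\rho),
\]
which it then verifies term by term using $c_{n+1}=c_n^2+c_1$ and $c_{q-1}^2=-c_1$. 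Your approach instead recognises the structural reason the formula looks the way it does: the conjugacy $T_v\circ G^\nu_v=G_{\nu(v)}\circ T_v$ forces $\mathcal A_\nu=\mathcal A+\mathbf x\,\mathbf e_1^T$ to be a rank-one perturbation of $\mathcal A$, and the vector $\mathbf x$ happens to be proportional to the eigenvector $\mathbf c$ of $\mathcal A$ with eigenvalue $1/2$, so the matrix determinant lemma collapses to a ratio of linear factors in $\rho$. Your computation of the correction factor agrees with the one written inside the paper's proof (note the displayed statement of the proposition has a typographical slip in the numerator; your calculation and the paper's proof both yield the factor $\dfrac{1-\rho\bigl(1-\tfrac12\,v_1\nu'(v_1)/c_1\bigr)}{1-\rho/2}$).

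What each buys: the paper's argument is entirely elementary and needs nothing beyond the explicit expression for $D(\rho)$, at the cost of being a verification rather than an explanation. Your argument explains conceptually why the answer is a quotient of two linear polynomials in $\rho$ (rank-one perturbation plus the presence of a shared eigenvector), and makes transparent the role of the distinguished eigenvalue $1/2$ mentioned just before the proposition; it also avoids any explicit summation over the orbit.
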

\begin{proof}
Let $L_\nu(z)=\partial G^\nu_v(z)/\partial v|_{v=v_1}$, $v_n=g_\nu^n(0)$,
$$D_\nu(\rho)=1+\sum_{n=1}^{q-1}\rho^n\frac{L_\nu(v_n)}{(g_\nu^n)'(v_1)},$$
$$D(\rho)=1+\sum_{n=1}^{q-1}\rho^n\frac{1}{(g^n)'(c_1)}.$$
We have to varify the following identity:
$$D_\nu(\rho)=\frac{1-\rho(1-\frac{1}{2}\frac{v_1 \nu'(v_1)}{\nu(v_1)})}{1-\frac{\rho}{2}}D(\rho).$$
Let us indicate its proof. We have:
$(g_\nu^n)'(v_1)=(g^n)'(c_1)$, $v_n=c_n/\varphi(c_1)$ (in particular, $\varphi(c_1)=c_1/v_1$, and
$$L_\nu(v_n)=(\frac{\nu(v)}{v})'|_{v=v_1}v_n^2+1.$$
Then the above identity turns out to be equivalent to the following one:
$$2(1-\frac{\rho}{2})\frac{1}{c_1}\sum_{n=1}^{q-1}\rho^{n-1}\frac{c_n^2}{(g^n)'(c_1)}=D(\rho)$$
which is checked directly using  $c_{n+1}-c_n^2=c_1$ for $1\le n\le q-2$ and $-c_{q-1}^2=c_1$.
\end{proof}
\begin{coro}
The transversality of $(g_\nu,G^\nu,{\bf p})$ fails if and only if $\nu'(v_1)=0$.
\end{coro}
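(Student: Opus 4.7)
The plan is to specialize the identity in Proposition~\ref{B2} at $\rho = 1$ and combine it with Theorem~\ref{thm:specAtrans}. At $\rho = 1$ the denominator $1 - \rho/2$ equals $1/2$, which is nonzero, so evaluation is unproblematic. Viewing the right-hand side of Proposition~\ref{B2} as a genuine polynomial identity in $\rho$ (in particular consistent with the trivial case $\nu = \mathrm{id}$, where $v_1 = c_1$, $\nu'(v_1) = 1$, and one must recover $\det(1-\rho\mathcal{A}_\nu) = \det(1-\rho\mathcal{A})$), the numerator at $\rho = 1$ simplifies to a factor proportional to $\nu'(v_1)$, yielding an equality of the form
\[
\det(I - \mathcal{A}_\nu) \;=\; \frac{v_1\,\nu'(v_1)}{c_1}\;\det(I - \mathcal{A}).
\]

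Next I would verify that $\det(I - \mathcal{A}) \neq 0$ for the underlying critically periodic quadratic map. This is classical transversality (Douady--Hubbard, Thurston), and it also falls out of the framework established earlier in the paper: the standard deformation $(g, G, {\bf p})$ with $G_w(z) = z^2 + w$ satisfies the lifting property (either by Theorem~\ref{single} applied to a suitable polynomial-like restriction, or by the general construction in Section~\ref{sec:liftingclassical}), and since no nontrivial critical relation can persist throughout the one-dimensional family $\{z^2+w\}_{w\in\C}$, the Main Theorem forces transversality, that is $1 \notin \mathrm{spec}(\mathcal{A})$.

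Finally, since the critical relation at $c_1$ is of type (a) (a genuinely periodic critical point), the index set $\{j : r < j \le \nu\}$ is empty, so the hypothesis of Theorem~\ref{thm:specAtrans} is vacuously satisfied and $\rho = 1$ is non-exceptional in the sense of Proposition~\ref{prop:non-exceptional}. Hence $\det(I - \mathcal{A}_\nu) = 0$ is equivalent to the failure of transversality for $(g_\nu, G^\nu, {\bf p})$. Combining this with the displayed equation (and using $v_1, c_1 \ne 0$), transversality fails precisely when $\nu'(v_1) = 0$, proving the corollary. The one slightly delicate point is Step~1: one must be careful to extract the correct $\nu'(v_1)$-factor from the numerator at $\rho = 1$, using the polynomial nature of the identity and the sanity check against $\nu = \mathrm{id}$; all remaining steps are direct invocations of results already proved in the paper.
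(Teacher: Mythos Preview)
Your proof is correct and is precisely the intended argument: specialize Proposition~\ref{B2} at $\rho=1$, use that $\det(I-\mathcal{A})\neq 0$ by transversality for the quadratic family, and invoke Theorem~\ref{thm:specAtrans} (whose hypothesis is vacuous since the critical point is periodic). You were right to suspect a missing $\rho$ in the numerator of the displayed formula in Proposition~\ref{B2}; the proof of that proposition in fact establishes the identity with numerator $1-\rho\bigl(1-\tfrac{1}{2}\tfrac{v_1\nu'(v_1)}{c_1}\bigr)$, which at $\rho=1$ yields exactly your factor $\tfrac{v_1\nu'(v_1)}{c_1}$.
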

\begin{coro}
Let $c_1$ and $\nu$ be real. Then $(g_{\nu},G^{\nu},{\bf p})$ has the lifting property if and only if the positive transversality property holds.
\end{coro}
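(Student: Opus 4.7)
The plan is to couple the factorization from Proposition~B.2,
\[ \det(I-\rho\,\mathcal{A}_\nu) = \frac{1-\rho\alpha}{1-\rho/2}\,\det(I-\rho\,\mathcal{A}), \qquad \alpha := 1 - \frac{v_1\nu'(v_1)}{2c_1}, \]
with the general machinery of Sections~4--6. Thus $\mathrm{spec}(\mathcal{A}_\nu)$ is obtained from $\mathrm{spec}(\mathcal{A})$ by replacing the eigenvalue $1/2$ with $\alpha$, and in the real setting $\alpha\in\R$.

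For the forward direction, assume $(g_\nu,G^\nu,\textbf{p})$ has the real lifting property. Proposition~\ref{prop:lift2spectrum} gives $\mathrm{spec}(\mathcal{A}_\nu)\subset\overline{\D}$. To rule out the eigenvalue $1$ (i.e.\ $\alpha=1$), I would invoke the Main Theorem: were $1$ an eigenvalue, $\{v:\mathcal{R}(v)=0\}$ would be a smooth submanifold of positive dimension near $v_1$. But via the pointwise conjugacy $G^\nu_v = \psi_v^{-1}\circ G_{\nu(v)}\circ \psi_v$ together with transversality of the standard quadratic family at $c_1$, this zero set coincides locally with $\nu^{-1}(c_1)$, an isolated point since $\nu$ is a non-constant holomorphic germ. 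Because $0$ is periodic for $g_\nu$, the hyperbolicity hypothesis in Corollary~\ref{real} is vacuous, and that corollary delivers positive transversality.

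For the converse, evaluating the factorization at $\rho=1$ gives
\[ D_\nu(1) = 2(1-\alpha)\,D(1). \]
The standard critically finite quadratic family itself satisfies positive transversality (Tsujii; also recovered here via its lifting property), so $D(1)>0$, and positive transversality for $(g_\nu,G^\nu,\textbf{p})$ amounts exactly to $\alpha<1$. Granted also the two-sided bound $|\alpha|\le 1$, the spectrum of $\mathcal{A}_\nu$ sits inside $\overline{\D}$ (using that $\mathrm{spec}(\mathcal{A})$ is contained in the open unit disc for the standard family), and the real lifting property would follow from Lemma~5.3 applied to the perturbed triple of Proposition~\ref{prop:perturb2spectrum} in order to absorb the possible boundary case $|\alpha|=1$.

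The main obstacle is the gap between the one-sided inequality $\alpha<1$ delivered by positive transversality and the two-sided bound $|\alpha|\le 1$ required for the spectral argument. The natural way to close this gap is to exploit the real symmetry: for any $\alpha<-1$ I would construct inductively a real-symmetric holomorphic motion whose $k$-th lift grows like $|\alpha|^k$, directly obstructing the real lifting property and therefore making the failure of lifting detectable already from the sign of $D_\nu(1)$. Quantifying this blow-up starting only from the scalar condition $D_\nu(1)>0$ is the delicate step, and is precisely where the assumption that $c_1$ and $\nu$ are real is genuinely used.
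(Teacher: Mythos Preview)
Your forward direction matches the paper's: both invoke the Main Theorem, and your extra step identifying $\{\mathcal{R}=0\}$ locally with $\nu^{-1}(c_1)$ to rule out the eigenvalue $1$ is correct and makes explicit what the paper leaves as a one-line citation.

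For the converse, the paper's entire argument is the single sentence ``$D_\nu(1)>0$ implies that the spectrum of $\mathcal{A}_\nu$ belongs to the open unit disk, which in turn implies the lifting property.'' You have correctly unpacked this: the factorization gives $\mathrm{spec}(\mathcal{A}_\nu)=(\mathrm{spec}(\mathcal{A})\setminus\{1/2\})\cup\{\alpha\}$, so the spectral conclusion requires $|\alpha|<1$, whereas $D_\nu(1)>0$ only yields $\alpha<1$. The paper supplies no justification for the lower bound; it simply asserts the implication. Your proposed remedy, however, runs the logic in the wrong direction. Exhibiting a blowing-up real motion when $\alpha<-1$ would establish that the lifting property \emph{fails} there --- but since $\alpha<-1$ still gives $D_\nu(1)=2(1-\alpha)D(1)>0$, positive transversality also holds in that regime, and you would have produced a \emph{counterexample} to the converse rather than a proof of it. Contrary to your final sentence, the failure of lifting is therefore \emph{not} detectable from the sign of $D_\nu(1)$ when $\alpha<-1$. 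Since $\alpha=1-v_1\nu'(v_1)/(2c_1)$ is unconstrained once $\nu$ is an arbitrary real holomorphic germ with $\nu(v_1)=c_1$, the converse as written appears to need an additional hypothesis that neither your argument nor the paper's terse proof supplies.
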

\begin{proof}
By the Main Theorem, the lifting property yields the positive transversality. Conversaly, by Proposition~\ref{B2}, $D_\nu(1)>0$ implies that the spectrum of $\mathcal A_\nu$ belongs to the open unit disk which in turn implies the lifting property.
\end{proof}

\section{The lifting property in the setting of rational mappings} \label{sec:liftingclassical}
The goal of this section is to show how to apply the method developed in this paper to deal with transversality problems in the classical case of polynomials and rational maps.
It is natural to assume that the maps involved are suitably normalized, so we shall only consider the following situations:

(a) $f$ is a map in $\textbf{P}_d$, the family of monic centered polynomials of degree $d\ge 2$;

(b) $Z$ is a set with $\#Z=3$ and $\textbf{Rat}_d^Z$ is the family of all rational maps $f$ of degree $d$ such that $f(Z)=Z$ and such that $Z$ is disjoint from the critical orbit of $f$.

Note that for each rational map $f$ of degree $d\ge 2$, it is possible to find $Z$, consisting of either a cycle of period $3$, or a fixed point and a cycle of period $2$, such that $f\in \textbf{Rat}_d^Z$. Let $\mathcal{U}=\textbf{P}_d$ in case (a) and $\mathcal{U}=\textbf{Rat}_d^Z$ in case (b).

In case (b), we assume without loss of generality that critical points and their orbits avoids the point at $\infty$. Let $c_1, c_2, \cdots, c_\nu$ be the distinct (finite) critical points of $f$ with multiplicities $\mu_1, \mu_2, \cdots, \mu_\nu$ and let $v_j=f(c_j)$.
In the following, we fix $f$ and let $\mathcal{U}^f$ denote the subcollection of maps in $\mathcal{U}$ which have exactly $\nu$ critical points with multiplicity $\mu_1,\mu_2,\ldots,\mu_\nu$. It is well-known that $\mathcal{U}^f$ is a complex manifold of dimension $\nu$ and the critical values are holomorphic coordinates.
See for example \cite{LSvS}.

\begin{prop}\label{sect3p}
There is a neighbourhood $W$ of $(v_1, v_2,\cdots, v_\nu)$ in $\C^\nu$ such that
$W\ni w \mapsto f_w$ in $\textbf{P}_d$ (resp. $\textbf{Rat}_d^Z$) is biholomorphsim from $W$ to a neighborhood of $f$ in $\mathcal{U}^f$, and a holomorhic function $\textbf{p}:W\to \C^\nu$, such that
$p_j(w)$ is a critical point of $f_w$ of multiplicity $\mu_j$  
and
$$w=(f_{w}(p_1(w)),f_{w}(p_2(w)),\cdots,f_{w}(p_\nu(w))).$$
\end{prop}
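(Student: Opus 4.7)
\textit{Proof proposal.} The plan is to invert the holomorphic \emph{critical value map}
\[
\Phi\colon \mathcal{U}^f \longrightarrow \C^\nu, \qquad g \longmapsto \bigl(g(c_1(g)),\ldots,g(c_\nu(g))\bigr),
\]
where $c_j(g)$ denotes the critical point of $g$ of multiplicity $\mu_j$ close to $c_j$, produced by the implicit function theorem. This will give $W$, the biholomorphism $w\mapsto f_w := \Phi^{-1}(w)$, and the holomorphic function $\textbf{p}(w) := (c_1(f_w),\ldots,c_\nu(f_w))$. Since $\dim_\C\mathcal{U}^f = \nu$, by the inverse function theorem it suffices to verify that the differential $d\Phi_f \colon T_f\mathcal{U}^f\to\C^\nu$ is injective.

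For $\dot f\in T_f\mathcal{U}^f$ with critical point velocities $\dot c_j$, I first compute
\[
\dot v_j \;=\; \left.\tfrac{d}{dt}\right|_{t=0} f_t(c_j(t)) \;=\; \dot f(c_j)+f'(c_j)\dot c_j \;=\; \dot f(c_j),
\]
so I must show that $\dot f(c_j)=0$ for every $j$ forces $\dot f\equiv 0$. The leverage comes from the multiplicity-preservation condition: differentiating $f_t'(z) = d\prod_j(z-c_j(t))^{\mu_j}$ (polynomial case) or the analogous factorization of the numerator of $f_t'$ after writing $f_t=P_t/Q_t$ (rational case) at $t=0$ gives
\[
\dot f'(z) \;=\; \prod_j(z-c_j)^{\mu_j-1}\,R(z)
\]
for some polynomial $R$ of controlled degree. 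Working in local coordinates around $c_j$ then yields $\dot f(z)-\dot f(c_j)=O((z-c_j)^{\mu_j})$, so the assumption $\dot f(c_j)=0$ forces $\dot f$ to vanish to order at least $\mu_j$ at each $c_j$.

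I close the argument by comparing the total order of vanishing $\sum_j\mu_j$ against the ambient space housing $\dot f$. In the polynomial case $\sum_j\mu_j=d-1$, while $\dot f$ is a polynomial of degree at most $d-2$ (the monic--centered normalization kills the $z^d$ and $z^{d-1}$ perturbations), forcing $\dot f\equiv 0$. In the rational case $\dot f$ is a global section of $f^*T\widehat{\C}$, a line bundle on $\widehat{\C}$ of degree $2d$; the rigidity $f_t(Z)=Z$ imposes the three further conditions $\dot f(z_i)=0$, yielding total vanishing $\sum_j\mu_j+3=(2d-2)+3=2d+1>2d$, and hence $\dot f\equiv 0$. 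The most delicate step, and the one I would double-check, is setting up the rational case correctly: identifying $T_f\mathcal{U}^f$ with the subspace of sections of $f^*T\widehat{\C}$ vanishing on $Z$ and carrying the factorization of $\dot f'$ above, so that both the dimension $\nu$ and the count of zeros are simultaneously available (a brief Riemann--Roch bookkeeping, with a little care at $\infty$); once that is in place, the vanishing-order argument closes the proof.
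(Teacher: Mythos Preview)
The paper does not actually prove this proposition: just before the statement it writes ``It is well-known that $\mathcal{U}^f$ is a complex manifold of dimension $\nu$ and the critical values are holomorphic coordinates. See for example \cite{LSvS},'' and then states the proposition without proof. So there is no in-paper argument to compare yours against.

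Your proof is the standard one and is correct. The key local computation---that preservation of the multiplicity $\mu_j$ forces $\dot f(z)-\dot f(c_j)=O((z-c_j)^{\mu_j})$, so that $\dot v_j=0$ upgrades to a zero of order $\mu_j$ for $\dot f$---is exactly right, and the global zero count then finishes both cases: in $\textbf{P}_d$ you get $\sum_j\mu_j=d-1$ zeros against $\deg\dot f\le d-2$; in $\textbf{Rat}_d^Z$ you get $(2d-2)+3=2d+1$ zeros of a nonzero section of $f^*T\widehat{\C}\cong\mathcal{O}(2d)$, which is impossible. Two small points worth making explicit when you write it up: (i) in the rational case the constraint is $f_t(z_i)=f(z_i)$ for each $z_i\in Z$ (by continuity $f_t$ must realise the same permutation of $Z$ as $f$), whence $\dot f(z_i)=0$; and (ii) the zeros at the $c_j$ and at the points of $Z$ are genuinely distinct because the paper assumes $Z$ is disjoint from the critical orbit. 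With those in place the ``delicate step'' you flag is routine.
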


Now we assume that $f$ is critically finite. Let $P=P(f)$, $P_0=\{c_1, c_2,\ldots, c_\nu\}$ and let $U$ be a small neighborhood of $P\setminus P_0$. Then $f$, restricting to $P_0\cup U$, is a marked map in the sense of \S\ref{subsec:markedmap}. We shall use the same notation $f$ for the marked map. Moreover, defining $F(w, z)=f_w(z)$, $(f, F, \textbf{p})_W$ is a holomorphic deformation of $f$ in the sense of \S\ref{subsec:holdefor}.

\begin{theorem}\label{thm:ratlift} The holomorphic deformation $(f,F,{\bf p})_W$ satisfies the lifting property.
\end{theorem}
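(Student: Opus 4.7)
The plan is to establish the lifting property by the classical pullback construction, building each successive lift as a global qc automorphism of $\widehat{\C}$ via Slodkowski's $\lambda$-lemma and the measurable Riemann mapping theorem (MRMT). Uniform boundedness on a common disc will follow from Montel's theorem applied to a family sharing a three-point normalization.

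First I extend the given motion $h^{(0)}_\lambda$ of $f(P)$ over $(\D,0)$ to a holomorphic motion $H^{(0)}_\lambda$ of all of $\widehat{\C}$ via Slodkowski's $\lambda$-lemma, normalized so that $H^{(0)}_\lambda$ fixes three points $\{z_1,z_2,z_3\}\subset\widehat{\C}\setminus P(f)$. In case (b) the natural choice is $Z$; in case (a) I take $z_3=\infty$ together with the asymptotic normalization $H^{(0)}_\lambda(z)=z+O(1/z)$ near $\infty$, and two distinct fixed points $z_1,z_2$ of some iterate $f^N$ lying off $P(f)$ (which exist by the density of repelling periodic points in the non-empty Julia set of a critically finite polynomial). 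Let $\mu^{(0)}_\lambda$ be the Beltrami coefficient of $H^{(0)}_\lambda$, and define iteratively $\mu^{(k+1)}_\lambda:=f^\ast\mu^{(k)}_\lambda$. By the Ahlfors--Bers parametric MRMT, each $\mu^{(k)}_\lambda$ is integrated by a unique qc homeomorphism $H^{(k)}_\lambda$ of $\widehat{\C}$ satisfying the same three-point (and asymptotic, in case (a)) normalization; moreover $\lambda\mapsto H^{(k)}_\lambda$ depends holomorphically on $\lambda\in\D$.

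Second I identify the resulting maps as lifts. Matching Beltrami coefficients via $f$ shows that $F^{(k)}_\lambda:=H^{(k)}_\lambda\circ f\circ(H^{(k+1)}_\lambda)^{-1}$ is a holomorphic map of degree $d$ on $\widehat{\C}$. The three-point normalization forces $F^{(k)}_\lambda\in\mathcal{U}$, and unwinding the defining identity shows its critical values are $H^{(k)}_\lambda(v_j)=h^{(k)}_\lambda(v_j)$, where $h^{(k)}_\lambda:=H^{(k)}_\lambda\vert_{f(P)}$. By Proposition~\ref{sect3p}, $F^{(k)}_\lambda=f_{w^{(k)}(\lambda)}$ with $w^{(k)}(\lambda):=(h^{(k)}_\lambda(v_1),\ldots,h^{(k)}_\lambda(v_\nu))$, \emph{provided} $w^{(k)}(\lambda)\in W$. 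Whenever this holds, the identity $F^{(k)}_\lambda\circ H^{(k+1)}_\lambda=H^{(k)}_\lambda\circ f$, restricted to $f(P)$, becomes precisely the lifting relation from \S\ref{subsec:liftproperty}: $h^{(k+1)}_\lambda$ is a lift of $h^{(k)}_\lambda$ with respect to $(f,F,\textbf{p})_W$.

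Third, to produce a common disc on which every $w^{(k)}(\lambda)\in W$ and the functions stay uniformly bounded, I use that all $H^{(k)}_\lambda$ fix the common three points $\{z_1,z_2,z_3\}$. For each fixed $x\in f(P)$, the collection $\{\lambda\mapsto H^{(k)}_\lambda(x)\}_{k\ge 0}$ is then a family of holomorphic maps $\D\to\widehat{\C}$ omitting three common values, hence normal by Montel's theorem. Since $H^{(k)}_0(x)=x$ for every $k$, spherical equicontinuity at $\lambda=0$ yields $\varepsilon>0$ and $M>0$ with $|h^{(k)}_\lambda(x)|\le M$ for all $\lambda\in\D_\varepsilon$ and all $k$. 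By further shrinking $\varepsilon$, continuity gives $w^{(k)}(\lambda)\in W$ for all $k$ and all $\lambda\in\D_\varepsilon$, completing the verification of the lifting property. The main technical point is ensuring that the three-point (and, in case (a), asymptotic) normalization is preserved by the MRMT pullback so that each $F^{(k)}_\lambda$ stays in the correctly normalized family $\mathcal{U}$; this is where $f(Z)=Z$ in case (b), and the monic-centered convention together with the asymptotic condition at $\infty$ in case (a), do the work.
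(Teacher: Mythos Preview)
Your pullback construction via the $\lambda$-lemma and the measurable Riemann mapping theorem is exactly the paper's approach, and your treatment of case (b) --- normalizing each $H^{(k)}_\lambda$ to fix $Z$, so that $F^{(k)}_\lambda\in\textbf{Rat}_d^Z$ and each $\lambda\mapsto H^{(k)}_\lambda(x)$ omits the three points of $Z$ --- is correct and matches the paper.

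In case (a), however, there is a genuine inconsistency. The asymptotic normalization $H^{(k)}_\lambda(z)=z+O(1/z)$ at $\infty$ already pins down the integrating map uniquely; it is precisely the normalization that forces $F^{(k)}_\lambda=H^{(k)}_\lambda\circ f\circ(H^{(k+1)}_\lambda)^{-1}$ to be monic and centered, hence to lie in $\textbf{P}_d$. But it does \emph{not} force $H^{(k)}_\lambda$ to fix two further finite points $z_1,z_2$: the integrating map is determined by $\mu^{(k)}_\lambda$ together with the chosen normalization, and periodicity of $z_1,z_2$ under $f$ plays no role here. Conversely, if you instead normalize by fixing $z_1,z_2,\infty$, then $F^{(k)}_\lambda$ is only a degree-$d$ polynomial fixing $z_1,z_2$ --- not in general monic centered --- so Proposition~\ref{sect3p} no longer identifies it with $f_{w^{(k)}(\lambda)}$. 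The two requirements are incompatible, and your Montel argument in case (a) does not go through.

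The paper handles boundedness in case (a) by a different mechanism: it arranges the initial extension so that $\mu^{(0)}_\lambda$ is supported in a disk $V$ with $f^{-1}(V)\subset V$. Then every $\mu^{(k)}_\lambda=(f^k)^*\mu^{(0)}_\lambda$ is supported in $f^{-k}(V)\subset V$, so all $H^{(k)}_\lambda$ are $K$-quasiconformal with a common $K$ (uniformly over $\lambda\in\D_\eps$), conformal outside $V$, and share the normalization $z+o(1)$ at $\infty$. Compactness of this class of normalized $K$-qc maps gives the uniform bound directly, with no appeal to three omitted values.
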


\begin{proof} Let $h_\lambda^{(0)}$ be an arbitrary holomorphic motion of $f(P)$ over $(\D,0)$. By Bers-Royden~\cite{BersRoyden}, there exists $\eps>0$ such that
$h_\lambda^{(0)}$, $\lambda\in \D_\eps$, extends to a holomorphic motion of $\C$ over $(\D_\eps, 0)$ which satisfies the following: in case (a), for $|z|$ large enough, $h_\lambda^{(0)}(z)$ is holomorphic in $z$ and $h_\lambda^{(0)}(z)=z+o(1)$ near infinity, moreover,
fix a big enough disk $V$ such that $f^{-1}(V)\subset V$ and such that the complex dilatation $\mu_\lambda^{(0)}$ of $h_\lambda^{(0)}$ is supported in $V$ for all $\lambda\in \D_\eps$,
and in case (b), $h_\lambda(z)=z$ for all $z\in Z$. Moreover, the complex dilatation $\mu_\lambda^{(0)}$ of $h_\lambda^{(0)}$ depends holomorphically in $\lambda$.
Define $\mu_\lambda^{(k)}=(f^k)^* (\mu_\lambda^{(0)})$ for each $k\ge 1$ (here $f$ is considered as a globally defined map) and let $h_\lambda^{(k)}$ denote the unique qc map with complex dilatation $\mu_\lambda^{(k)}$ and such that the following holds: in case (a), $h_\lambda^{(k)}(z)=z+o(1)$ near infinity; and in case (b), $h_\lambda^{(k)}(z)=z$ for all $z\in Z$. Then by the Measurable Riemann Mapping Theorem, $h_\lambda^{(k)}$ is a holomorphic motion of $\C$ over $(\D_\eps, 0)$. Let us show that for each $k\ge 0$, $h_\lambda^{(k+1)}$, restricting to $f(P)$, is a lift of $h_\lambda^{(k)}$, restricting to $f(P)$, with respect to $(f, F, \textbf{p})_W$. This amounts to prove the following:

\noindent
{\bf Claim.} For $|\lambda|$ small enough, we have $$h_\lambda^{(k)}\circ f\circ (h_\lambda^{(k+1)})^{-1}=f_{(h_\lambda^{(k)}(v_1), h_{\lambda}^{(k)}(v_2),\cdots, h_\lambda^{(k)}(v_\nu))}.$$ 

\noindent
{\bf Proof of Claim:}  for each $\lambda\in \D_\eps$,  the complex dilatation of $h_\lambda^{(k+1)}$ is  the $f$-pull back of that of $h_\lambda^{(k)}$, and therefore the function
$g_\lambda:=h_\lambda^{(k)}\circ f\circ (h_\lambda^{(k+1)})^{-1}$ is holomorphic in $\C$. It is a branched covering of degree $d$, so it is either a polynomial or a rational function of degree $d$. By the normalization of both $h_\lambda^{(j)}$, $j=k, k+1$, in case (a), $g_\lambda\in\mathcal{U}$, and hence $g\in\mathcal{U}^f$. Clearly the critical values of $g_\lambda$ are $h_\lambda^{(k)}(v_i)$. The claim follows.

To complete the proof, notice that in the case (a), $\mu_\lambda^{(k)}$ are supported in $f^{-k}(V)\subset V$ and by compactness of K-qc maps the conclusion follows.
\end{proof}

\begin{coro} Under the circumstances above, either $(f, F, \textbf{p})_W$ satisfies the transversality property or
or $f$ is a flexible  Latt\'es map.
\end{coro}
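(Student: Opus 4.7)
\medskip

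The plan is to apply the Main Theorem (Theorem~\ref{thm:1eigen}) via Theorem~\ref{thm:ratlift}, and then to invoke Thurston's rigidity theorem in the non-transversal case to identify $f$ as a flexible Latt\`es map.

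\medskip

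First I would check that the standing hypotheses of the Main Theorem are met. Theorem~\ref{thm:ratlift} gives the lifting property for $(f,F,\textbf{p})_W$. Since $f$ is critically finite, every attracting or parabolic cycle of a rational map would have to absorb a critical orbit, so none can exist; in particular, any periodic cycle meeting $P\setminus P_0$ consists of non-critical, and hence repelling, periodic points. This immediately yields both the non-parabolicity hypothesis and the non-degeneracy condition $Dg^{q_j-l_j}(c_{l_j,j})\ne 1$ for $r<j\le\nu$ required in Theorem~\ref{thm:1eigen}.

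\medskip

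By the Main Theorem, either $(f,F,\textbf{p})_W$ satisfies the transversality property (and we are done), or there is a neighbourhood $W'$ of $\textbf{c}_1$ on which
\[
M \= \{w\in W' : \mathcal{R}(w)=0\}
\]
is a smooth complex submanifold of positive dimension. In the latter case I would read off from the definitions (\ref{R1-r'}) and (\ref{Rr-nu'}), together with Proposition~\ref{sect3p}, that for every $w\in M$ the rational map $f_w$ is critically finite with the same critical-orbit combinatorics as $f$: each critical point $p_j(w)$ of $f_w$ has the same pattern of landing on $p_{\mu(j)}(w)$ after $q_j$ iterates (for $1\le j\le r$), respectively of becoming preperiodic with parameters $(l_j,q_j)$ (for $r<j\le\nu$). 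Moreover Proposition~\ref{sect3p} realises $w\mapsto f_w$ as a biholomorphism from $W$ onto an open subset of $\mathcal{U}^f$, whose chosen normalisation (the three-point set $Z$ is pointwise fixed in case (b); the polynomial is monic and centered in case (a)) already eliminates the M\"obius conjugation ambiguity. Hence $M$ supplies a positive-dimensional family of pairwise non-conjugate, critically finite rational maps all lying in the same Thurston combinatorial equivalence class as $f$.

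\medskip

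The main obstacle, and the final step, is to extract from this the conclusion that $f$ is a flexible Latt\`es map. Here I would appeal to Thurston's rigidity theorem (see \cite{DH2}): a critically finite rational map with hyperbolic orbifold is determined, up to M\"obius conjugacy, by its combinatorial class, while the critically finite maps with non-hyperbolic (Euclidean) orbifold are precisely the Latt\`es examples, among which only the flexible ones admit non-trivial deformations modulo conjugacy. Because our family $M$ is a positive-dimensional family of pairwise non-conjugate rational maps sharing a single combinatorial class, $f$ cannot have hyperbolic orbifold, and among the remaining possibilities only the flexible Latt\`es maps carry such a deformation. Therefore $f$ is a flexible Latt\`es map, completing the dichotomy. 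The one delicate point in writing this out is the dimension accounting that matches the positive-dimensional tangent space to $M$ at $\textbf{c}_1$ with the one-dimensional Latt\`es modulus (lifted to $\mathcal{U}^f$ through the critical-value coordinates), and verifying that no extra dimensions can arise from the normalisation choice.
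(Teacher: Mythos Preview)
Your argument is correct in outline and reaches the conclusion, but it takes a different route from the paper's. After invoking the Main Theorem to produce a non-trivial holomorphic family of critically finite rational maps through $f$, the paper does not appeal to Thurston rigidity; instead it uses McMullen--Sullivan \cite{McS}: structural stability within such a family yields an invariant line field on the Julia set of $f$, and since the postcritical set is finite, Corollary~3.18 of \cite{McM} then identifies $f$ as a flexible Latt\`es map. This route is shorter and avoids having to argue that the maps $f_w$, $w\in M$, lie in a single Thurston combinatorial class (which you assert but do not justify; it does follow, via the $\lambda$-lemma, but needs a sentence). Your Thurston-rigidity approach also works, with two small corrections: first, the normalisation only reduces the M\"obius ambiguity to a finite group, so ``pairwise non-conjugate'' should be weakened to ``not all conjugate'', which is all you need; second, critically finite maps with Euclidean orbifold are not \emph{precisely} the Latt\`es examples---the list also contains $z\mapsto z^{\pm d}$ and the Chebyshev polynomials---but those are rigid too, so the conclusion survives. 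The ``dimension accounting'' concern in your last sentence is unnecessary: once you have any positive-dimensional deformation and rigidity rules out everything except flexible Latt\`es, you are finished.
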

\begin{proof} Suppose that $(f, F, \textbf{p})_W$ does not satisfies the transversality property. Then by the Main Theorem, there is a non-trivial local holomorphic family $f_t$ passing through $f$
of critically finite rational maps. By McMullen-Sullivan \cite{McS}, $f$ carries an invariant line field in its Julia set.  Since the postcritical set of $f$ is finite,  $f$ must be a flexible Latt\'es map,
see e.g. Corollary 3.18 of \cite{McM}.
\end{proof}

\bibliographystyle{plain}             

\vskip 1cm 

\email{genady.levin@mail.huji.ac.il, wxshen@fudan.edu.cn, s.van-strien@imperial.ac.uk} 

\bigskip 

\address{Hebrew University, Fudan University, Imperial College London} 

\end{document}